\newtheoremstyle{noparens}%
  {}{}%
  {\itshape}{}%
  {\bfseries}{.}%
  { }%
  {\thmname{#1}\thmnumber{ #2}\mdseries\thmnote{ #3}}
\theoremstyle{noparens}
\newtheorem{theorem}{Theorem}[section]
\newtheorem{corollary}{Corollary}[section]
\newtheorem{proposition}{Proposition}[section]
\newtheorem{lemma}{Lemma}[section]
\newtheorem{remark}{Remark}[section]
\newtheorem{definition}{Definition}[section]
\newtheorem{example}{Example}[section]
\numberwithin{equation}{section}
\numberwithin{equation}{section}
\begin{document}
\baselineskip 18pt 

\title[Vassiliev invariants for virtual knotoids]{Vassiliev invariants for virtual knotoids}

\author{Siqi Ding}
\address{School of Mathematical Sciences, Dalian University of Technology, Dalian 116024, P. R. China}
\email{sqding@yeah.net}

\author{Xiaobo Jin}
\address{School of Mathematical Sciences, Dalian University of Technology, Dalian 116024, P. R. China}
\email{jxb1104@163.com}

\author{Fengchun Lei}
\address{Beijing Institute of Mathematical Sciences and Applications, Beijing 101408, P.R. China; School of Mathematical Sciences, Dalian University of Technology, Dalian 116024, P.R. China. }
\email{leifengchun@bimsa.cn}

\author{Fengling Li}
\address{School of Mathematical Sciences, Dalian University of Technology, Dalian 116024, P. R. China}
\email{fenglingli@dlut.edu.cn}

\author{Andrei Vesnin}
\address{Sobolev Institute of Mathematics of the Siberian Branch of the Russian Academy of Sciences, Novosibirsk 630090, Russia}
\email{vesnin@math.nsc.ru}

\thanks{F.\,Lei supported in part by a grant of NSFC (No. 12331003); F.\,Li supported by the Fundamental Research Funds for the Central Universities (No. DUT25LAB302); A.\,V. supported by the state task to the Sobolev Institute of Mathematics (No. FWNF-2026-0031).}

\date{\today} 

\subjclass[2020]{57K12, 57K16}
\keywords{Virtual knotoids; flat virtual knotoids; Vassiliev invariants; singular based matrix} 

\begin{abstract}
In this paper, we introduce the 0-smoothing invariant $\mathcal{F}$ of virtual knotoids constructed from local modification at classical crossings, which take values in a free $\mathbb Z$-module generated by non-oriented flat virtual knotoids. We prove that $\mathcal{F}$ is a Vassiliev invariant of order one. It was observed by Henrich that  smoothing invariant she constructed for virtual knots provides less information than the gluing invariant. We demonstrate the same property for the 0-smoothing invariant of virtual knotoids: $\mathcal{F}$ provides less information than the gluing invariant introduced by Petit. To prove this result, we use the extension of the singular based matrix invariant originally introduced by Turaev for singular virtual strings.
\end{abstract}

\maketitle


\section{Introduction}
In 2012, Turaev introduced knotoid diagram, defining it as an open-ended topological diagram in $S^2$ (or $\mathbb{R}^2$), which is a generic immersion of the unit interval with finitely many double points. Knotoids are defined to be equivalent classes of knotoid diagrams modulo the Reidemeister moves applied away from the endpoints~\cite{Tur1}.  Knotoid theory can be considered as a generalization of classical knot theory. G\"{u}g\"{u}mc\"{u} and Kauffman later generalized knotoids to virtual knotoids by adding virtual crossings in~\cite{GK}. In recent years, there have been numerous studies on invariants of knotoids, see, for example,~\cite{Tar,FLV}.

In 2013, Kauffman defined the affine index polynomial invariant of virtual knots in~\cite{Kau}. Kim further defined the affine index polynomial invariant for flat virtual knots in~\cite{Kim}.  In 2017, G\"{u}g\"{u}mc\"{u} and Kauffman defined the affine index polynomial invariants for knotoids and virtual knotoids in~\cite{GK}.

A history and theory of Vassiliev invariants are well-presented in~\cite{CDM}. We only recall that  a notion of a finite type invariant was introduced by Vassiliev in the end of the 1980's, see~\cite{Vas1, Vas2}. In~\cite{BL} Birman and Lin realized Vassiliev's approach, established the relation between the Jones polynomial and finite type invariants and emphasized the algebra of chord diagrams (called circular configurations in the paper).  Bar-Natan described the theory of knot invariants of finite type (Vassiliev invariants) in \cite{BN}. Goussarov, Polyak and Viro discussed finite type invariants of classical and virtual knots in \cite{GPV}. It is known~\cite[Ch~3.3]{CDM} that no non-trivial Vassiliev invariants of order one exist for classical knots. This property fails in the case of virtual knots, since Sawollek~\cite{Saw} constructed Vassiliev invariants of order one distinguishing virtual knots from their inverses. Im, Kim, and Lee~\cite{IKL} introduced a Vassiliev invariant for virtual knots and flat virtual knots. Subsequent studies have also been conducted on the Vassiliev invariants of virtual knots. Moltmaker and Kauffman discussed Vassiliev invariants for virtual knots in~\cite{MK}. 

In~\cite{Tch}, Tchernov introduced the universal Vassiliev invariant of order one of knots. Subsequently, Henrich \cite{Hen} and Petit \cite{Pet} extended finite-type invariants of order one and their universality to virtual knots, and framed virtual knots, respectively. In particular, Henrich provided a  construction giving a ``gluing invariant'' that is the universal Vassiliev invariant of order one for virtual knots. As noted in~\cite[Remark 7.9]{MLK}, one can adapt this construction for knotoids by considering the virtual closure of the knotoid.

Cheng~\cite{Che} established the chord index axiom, assigning integers to classical crossings via arc weights to generate invariants for virtual knots. In~\cite{CGX}, Cheng, Gao and Xu defined chord indices via smoothing classical crossings, yielding Vassiliev invariants valued in flat virtual knot diagrams. Moreover, Gill, Ivanov, Prabhakar and Vesnin introduced weight function, which may be thought of as a generalization of the chord index axioms in~\cite{GIPV}.  

In this paper we study Vassiliev invariant for virtual knotoids. We recall basis definitions in Section~\ref{section2}. In Section~\ref{section3} we define a polynomial $\mathcal{F}$ of virtual knotoids by formula~(\ref{eqn:defF}), which is valued in the free $\mathbb Z$-module generated by non-oriented flat virtual knotoids and prove that $\mathcal{F}$ is a virtual knotoid invariant, see Theorem~\ref{th3.1}. In Theorem~\ref{th3.3} we describe how the invariant changes when a knotoid diagram is replaced by the orientation reversed image and the mirror image. Furthermore, we prove that $\mathcal{F}$ is a Vassiliev invariant of order one, see Theorem~\ref{th3.4}. It was observed by Henrich~\cite[Th.~3.11]{Hen}, that the smoothing invariant she constructed for virtual knots provides less information than the gluing invariant. In Section~\ref{section4}, we demonstrate that the same property holds for virtual knotoids. Namely, it is shown in Theorem~\ref{th4.2}, that the $0$-smoothing invariant $\mathcal{F}$ provides less information than the gluing invariant given by Petit~\cite{Pet}, see formula~(\ref{eqn:defG}), which is known the universal Vassiliev invariant of order one. To prove Theorem~\ref{th4.2}, we use the extension of the singular based matrix invariant originally introduced by Turaev for singular virtual strings.

\section{Preliminaries} \label{section2}

\subsection{Classical knotoids}
In this subsection, we recall necessary definitions and fundamental results about knotoids will be used below, see \cite{GK}. 

\begin{definition}{\rm 
A \textit{knotoid diagram} $D$ in an oriented surface $\Sigma$ is a generic immersion of the unit segment $[0,1]$ into $\Sigma$ with finitely many transversal double points endowed with over/under-crossing information. Such a double point is called a \textit{classical crossing} of $D$. The \textit{endpoints} of $D$ are the images of $0$ and $1$, called the \textit{tail} and \textit{head}, respectively, which are assumed to be different from each other and any crossing. The \textit{orientation} of $D$ is from the tail to the head. A \textit{trivial knotoid diagram} is an embedding of the unit interval into $\Sigma$. 
} 
\end{definition}

Few examples of knotoid diagrams are shown in Fig.~\ref{fig1}, where (a) presents a trivial knotoid diagram.
\begin{figure}[htbp]
\begin{center}
\tikzset{every picture/.style={line width=1.0pt}}
\begin{tikzpicture}[x=0.75pt,y=0.75pt,yscale=-1.0,xscale=1.0] 
\draw    (9.66,75.66) .. controls (49.66,45.66) and (69.66,105.66) .. (109.66,75.66) ;
\draw  [fill={rgb, 255:red, 13; green, 13; blue, 13 }  ,fill opacity=1 ] (8,75.66) .. controls (8,74.74) and (8.74,74) .. (9.66,74) .. controls (10.57,74) and (11.31,74.74) .. (11.31,75.66) .. controls (11.31,76.57) and (10.57,77.31) .. (9.66,77.31) .. controls (8.74,77.31) and (8,76.57) .. (8,75.66) -- cycle ;
\draw  [fill={rgb, 255:red, 13; green, 13; blue, 13 }  ,fill opacity=1 ] (108,75.66) .. controls (108,74.74) and (108.74,74) .. (109.66,74) .. controls (110.57,74) and (111.31,74.74) .. (111.31,75.66) .. controls (111.31,76.57) and (110.57,77.31) .. (109.66,77.31) .. controls (108.74,77.31) and (108,76.57) .. (108,75.66) -- cycle ;
\draw   (55.14,69.34) -- (59.63,76.04) -- (51.59,76.51) ;
\draw    (175.2,101.38) .. controls (152.09,103.82) and (127.27,70.84) .. (142.07,46.04) .. controls (156.87,21.24) and (184.09,31.16) .. (186.09,57.16) ;
\draw    (186.31,99.6) .. controls (260.98,87.16) and (213.56,-8.58) .. (166.76,27.82) ;
\draw    (159.76,37.44) .. controls (153.36,51.04) and (180.07,70.1) .. (200,59.14) ;
\draw  [fill={rgb, 255:red, 13; green, 13; blue, 13 }  ,fill opacity=1 ] (198.34,59.14) .. controls (198.34,58.22) and (199.08,57.48) .. (200,57.48) .. controls (200.92,57.48) and (201.66,58.22) .. (201.66,59.14) .. controls (201.66,60.05) and (200.92,60.8) .. (200,60.8) .. controls (199.08,60.8) and (198.34,60.05) .. (198.34,59.14) -- cycle ;
\draw  [fill={rgb, 255:red, 13; green, 13; blue, 13 }  ,fill opacity=1 ] (179.05,113.84) .. controls (179.05,112.92) and (179.79,112.18) .. (180.71,112.18) .. controls (181.62,112.18) and (182.37,112.92) .. (182.37,113.84) .. controls (182.37,114.75) and (181.62,115.49) .. (180.71,115.49) .. controls (179.79,115.49) and (179.05,114.75) .. (179.05,113.84) -- cycle ;
\draw    (272.76,87.6) .. controls (255.57,80.28) and (258.26,49.62) .. (291.85,57.54) ;
\draw    (342.57,94.08) .. controls (358.86,88.36) and (368.88,66.28) .. (348.76,55.38) ;
\draw    (331.76,90.64) .. controls (318.76,71.64) and (320.7,47.71) .. (339.2,52.71) ;
\draw    (281.42,62.93) .. controls (258.92,135) and (346.93,127.11) .. (336.56,101.04) ;
\draw    (284.31,50.93) .. controls (295.81,3.36) and (354.37,24.25) .. (343.27,62.41) ;
\draw   (187.56,85.69) -- (182.45,91.93) -- (179.67,84.36) ;
\draw    (283.2,91.6) .. controls (293.2,95.6) and (316,103.54) .. (342.57,94.08) ;
\draw   (302.68,93.22) -- (309.03,98.18) -- (301.53,101.14) ;
\draw  [fill={rgb, 255:red, 13; green, 13; blue, 13 }  ,fill opacity=1 ] (290.19,57.54) .. controls (290.19,56.62) and (290.93,55.88) .. (291.85,55.88) .. controls (292.76,55.88) and (293.5,56.62) .. (293.5,57.54) .. controls (293.5,58.45) and (292.76,59.2) .. (291.85,59.2) .. controls (290.93,59.2) and (290.19,58.45) .. (290.19,57.54) -- cycle ;
\draw  [fill={rgb, 255:red, 13; green, 13; blue, 13 }  ,fill opacity=1 ] (341.61,62.41) .. controls (341.61,61.49) and (342.35,60.75) .. (343.27,60.75) .. controls (344.18,60.75) and (344.93,61.49) .. (344.93,62.41) .. controls (344.93,63.33) and (344.18,64.07) .. (343.27,64.07) .. controls (342.35,64.07) and (341.61,63.33) .. (341.61,62.41) -- cycle ;
\draw    (185.87,68.3) .. controls (183.87,95.3) and (179.51,94.24) .. (180.71,113.84) ;
\draw (48,124) node [anchor=north west][inner sep=0.75pt]    {(a)};
\draw (175,124) node [anchor=north west][inner sep=0.75pt]    {(b)};
\draw (305,124) node [anchor=north west][inner sep=0.75pt]    {(c)};
\end{tikzpicture}
\caption{Examples of knotoid diagrams.\label{fig1}}
\end{center}
\end{figure}
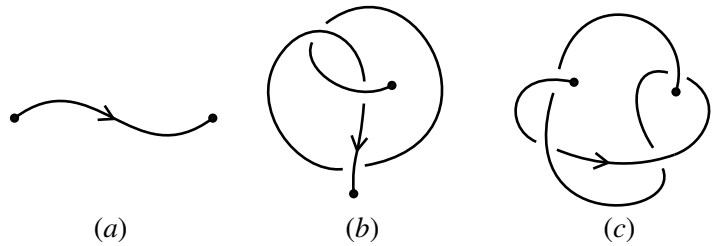

Analogously to the classical knot theory, three \textit{Reidemeister moves} $\mathcal{R} = \{ \Omega_1, \Omega_2, \Omega_3\}$ are presented in Fig.~\ref{fig2} are defined on knotoid diagrams. These moves modify a knotoid diagram within small disks surrounding the local diagrammatic regions and it is assumed that they do not utilize the endpoints. Since arcs of knotoids are oriented, we consider oriented versions of Reidemeister moves with all possible orientations. 
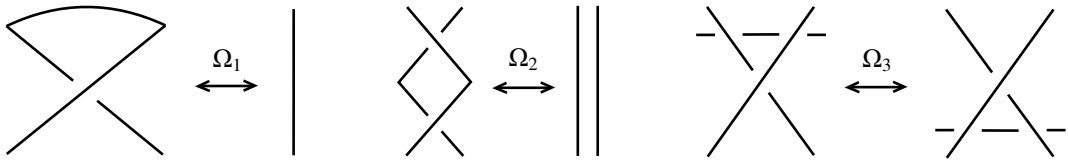
\begin{figure}[htbp]
\begin{center}
\tikzset{every picture/.style={line width=1.0pt}}
\begin{tikzpicture}[x=0.75pt,y=0.75pt,yscale=-1.0,xscale=1.0]
\draw    (224.91,241.03) -- (235.83,253.48) ;
\draw    (217.99,199.06) -- (203.27,216.33) -- (217.99,233.18) ;
\draw    (235.4,178.53) -- (224.81,191.03) ;
\draw    (5.45,187.97) .. controls (32.28,175.15) and (58.96,175.15) .. (85.64,187.61) ;
\draw    (5.6,252.51) -- (85.64,187.61) ;
\draw    (5.45,187.97) -- (39.09,215.52) ;
\draw   (125,215.49) -- (130.7,218.02) -- (125,220.54) ;
\draw    (129.99,218.18) -- (102.89,218.18) ;
\draw   (108,220.72) -- (102.18,218.31) -- (107.76,215.67) ;
\draw    (150.31,179.38) -- (150.27,253.15) ;
\draw    (207.08,253.3) -- (239.64,216.15) -- (207.51,178.35) ;
\draw    (293.6,177.71) -- (293.6,253.48) ;
\draw    (303.64,177.71) -- (303.64,253.48) ;
\draw    (412.73,178.24) -- (359.11,253.81) ;
\draw    (353.3,192.39) -- (362.79,192.39) ;
\draw    (51.22,225.64) -- (84.35,252.81) ;
\draw    (358.77,178.24) -- (381.86,210.31) ;
\draw    (389.9,221.57) -- (412.69,253.1) ;
\draw    (376.67,192.21) -- (395.2,192.14) ;
\draw    (409.29,192.37) -- (418.77,192.37) ;
\draw   (274.89,216.4) -- (280.59,218.93) -- (274.89,221.46) ;
\draw    (279.88,219.09) -- (252.78,219.09) ;
\draw   (257.89,221.64) -- (252.07,219.23) -- (257.65,216.58) ;
\draw   (452.6,216.07) -- (458.31,218.6) -- (452.6,221.13) ;
\draw    (457.59,218.76) -- (430.49,218.76) ;
\draw   (435.6,221.3) -- (429.78,218.89) -- (435.36,216.25) ;
\draw    (533.51,179.58) -- (479.71,254.48) ;
\draw    (473.88,240.46) -- (483.4,240.45) ;
\draw    (479.36,179.58) -- (502.54,211.37) ;
\draw    (510.6,222.53) -- (533.47,253.77) ;
\draw    (497.32,240.63) -- (515.92,240.7) ;
\draw    (530.06,240.47) -- (539.58,240.47) ;
\draw (107.88,197.62) node [anchor=north west][inner sep=0.75pt]  [font=\small]  {$\Omega_1$};
\draw (435.47,197.9) node [anchor=north west][inner sep=0.75pt]  [font=\small]  {$\Omega_3$};
\draw (257.23,197.56) node [anchor=north west][inner sep=0.75pt]  [font=\small]  {$\Omega_2$};
\end{tikzpicture}
\caption{Reidemeister moves $\Omega_1$, $\Omega_2$ and $\Omega_3$.\label{fig2}}
\end{center}
\end{figure}

\begin{definition} {\rm 
Two knotoid diagrams in $\Sigma$ are said to be \textit{equivalent} if they are related to each other by a finite sequence of moves, where each move is either from $\mathcal{R}$ or an isotopy of $\Sigma$. The equivalence classes of knotoid diagrams are called \textit{knotoids}. If $\Sigma$ is either $S^2$ or $\mathbb{R}^2$ then the knotoid is said to be \textit{spherical} or \textit{planar}, respectively. 
} 
\end{definition}

\begin{definition} {\rm 
Moves $\Phi_{+}$ and $\Phi_{-}$ of knotoid diagrams, presented in Fig.~\ref{fig3},  are called \textit{forbidden knotoid moves}.
}
\end{definition}

The forbidden knotoid moves are not allowed since, obviously, by applying a finite number of  both $\Phi_{+}$ and $\Phi_{-}$ moves any knotoid diagram in $\Sigma$ can be transformed into the trivial knotoid diagram.
\begin{figure}[htbp]
\begin{center}
\tikzset{every picture/.style={line width=1.0pt}} 
\begin{tikzpicture}[x=0.75pt,y=0.75pt,yscale=-1.0,xscale=1.0]  			
\draw    (42.82,349.32) -- (117.64,348.82) ;
\draw    (80.96,304.68) -- (81.26,339.78) ;
\draw    (81.26,358.86) -- (81.26,390.55) ;
\draw    (205.82,350.39) -- (238.57,350.39) ;
\draw    (255.07,305.75) -- (255.64,392.55) ;
\draw    (401.07,350.18) -- (431.11,350.28) ;
\draw    (392.04,306.06) -- (392.57,391.55) ;
\draw    (349.64,350.29) -- (380.38,350.28) ;
\draw   (170.33,351.01) -- (176.04,353.54) -- (170.33,356.06) ;
\draw    (175.32,353.7) -- (148.22,353.7) ;
\draw   (153.33,356.24) -- (147.51,353.83) -- (153.1,351.19) ;
\draw   (317,350.34) -- (322.7,352.87) -- (317,355.4) ;
\draw    (321.99,353.03) -- (294.89,353.03) ;
\draw   (300,355.58) -- (294.18,353.17) -- (299.76,350.52) ;
\draw  [fill={rgb, 255:red, 13; green, 13; blue, 13 }  ,fill opacity=1 ] (115.98,348.82) .. controls (115.98,347.9) and (116.72,347.16) .. (117.64,347.16) .. controls (118.55,347.16) and (119.29,347.9) .. (119.29,348.82) .. controls (119.29,349.73) and (118.55,350.47) .. (117.64,350.47) .. controls (116.72,350.47) and (115.98,349.73) .. (115.98,348.82) -- cycle ;
\draw  [fill={rgb, 255:red, 13; green, 13; blue, 13 }  ,fill opacity=1 ] (236.91,350.39) .. controls (236.91,349.48) and (237.65,348.73) .. (238.57,348.73) .. controls (239.48,348.73) and (240.23,349.48) .. (240.23,350.39) .. controls (240.23,351.31) and (239.48,352.05) .. (238.57,352.05) .. controls (237.65,352.05) and (236.91,351.31) .. (236.91,350.39) -- cycle ;
\draw  [fill={rgb, 255:red, 13; green, 13; blue, 13 }  ,fill opacity=1 ] (429.45,350.28) .. controls (429.45,349.36) and (430.2,348.62) .. (431.11,348.62) .. controls (432.03,348.62) and (432.77,349.36) .. (432.77,350.28) .. controls (432.77,351.19) and (432.03,351.93) .. (431.11,351.93) .. controls (430.2,351.93) and (429.45,351.19) .. (429.45,350.28) -- cycle ;
\draw (152.97,333.09) node [anchor=north west][inner sep=0.75pt]  [font=\small]  {$\Phi _{+}$};
\draw (299.63,333.63) node [anchor=north west][inner sep=0.75pt]  [font=\small]  {$\Phi _{-}$};
\end{tikzpicture}
\caption{Forbidden knotoid moves $\Phi_{+}$ and $\Phi_{-}$.\label{fig3}}	
\end{center}
\end{figure}
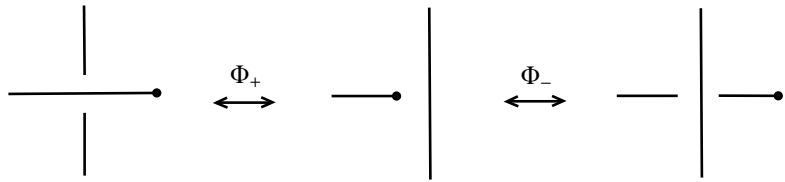

\subsection{Virtual knotoids}

It was mentioned in~\cite{Tur1} that notions of a virtual knot diagram and a virtual knot can be expressed to knotoids. We will follow terminology from~\cite{GK}.

\begin{definition} {\rm 	
A \textit{virtual knotoid diagram} is a knotoid diagram in $S^2$ with an extra combinatorial structure called  \textit{virtual crossings}. 
}
\end{definition}

A virtual knotoid diagram has two types of crossings: classical, see Fig.~\ref{fig4}~(a), and virtual, indicated by an enclosed circle without over/under-crossing information, see Fig.~\ref{fig4}~(b). 
\begin{figure}[!ht]
\begin{center}
\tikzset{every picture/.style={line width=1.0pt}} 
\begin{tikzpicture}[x=0.75pt,y=0.75pt,yscale=-1,xscale=1]
\draw    (201.26,306.05) -- (142.17,366.42) ;
\draw   (142.41,311.9) -- (141.66,305.97) -- (147.43,307.17) ;
\draw   (195.79,307.32) -- (201.51,305.91) -- (200.98,311.87) ;
\draw    (176.09,340.88) -- (203.09,367.71) ;
\draw    (288.81,306.61) -- (264.76,331.15) ;
\draw    (229.6,306.67) -- (288.93,366.92) ;
\draw   (230.25,312.46) -- (229.51,306.53) -- (235.27,307.73) ;
\draw   (283.34,307.88) -- (289.06,306.47) -- (288.53,312.43) ;
\draw    (254.09,342.15) -- (229.72,366.98) ;
\draw    (398.82,306.85) -- (339.74,367.22) ;
\draw    (339.62,306.91) -- (398.94,367.16) ;
\draw   (340.27,312.7) -- (339.52,306.77) -- (345.29,307.97) ;
\draw   (393.35,308.12) -- (399.07,306.71) -- (398.54,312.67) ;
\draw    (141.17,305.66) -- (166.76,331.54) ;
\draw    (500.82,306.83) -- (441.74,367.21) ;
\draw    (441.62,306.9) -- (500.94,367.14) ;
\draw   (442.27,312.69) -- (441.52,306.76) -- (447.29,307.95) ;
\draw   (495.35,308.11) -- (501.07,306.7) -- (500.54,312.65) ;
\draw   (363.86,337.04) .. controls (363.86,334.04) and (366.29,331.62) .. (369.28,331.62) .. controls (372.27,331.62) and (374.7,334.04) .. (374.7,337.04) .. controls (374.7,340.03) and (372.27,342.45) .. (369.28,342.45) .. controls (366.29,342.45) and (363.86,340.03) .. (363.86,337.04) -- cycle ;
\draw    (596.82,306.65) -- (537.74,367.03) ;
\draw    (537.62,306.72) -- (596.94,366.96) ;
\draw   (538.27,312.51) -- (537.52,306.58) -- (543.29,307.77) ;
\draw   (591.35,307.93) -- (597.07,306.52) -- (596.54,312.47) ;
\draw  [fill={rgb, 255:red, 14; green, 13; blue, 13 }  ,fill opacity=1 ] (561.86,336.84) .. controls (561.86,333.85) and (564.29,331.42) .. (567.28,331.42) .. controls (570.27,331.42) and (572.7,333.85) .. (572.7,336.84) .. controls (572.7,339.83) and (570.27,342.26) .. (567.28,342.26) .. controls (564.29,342.26) and (561.86,339.83) .. (561.86,336.84) -- cycle ;
\draw (179.67,331.25) node [anchor=north west][inner sep=0.75pt]  [font=\large]  {$c$};
\draw (267.6,331.25) node [anchor=north west][inner sep=0.75pt]  [font=\large]  {$c$};
\draw (143.32,370.02) node [anchor=north west][inner sep=0.75pt]  [font=\small]  {$\text{sgn}(c) =1$};
\draw (226.44,370.02) node [anchor=north west][inner sep=0.75pt]  [font=\small]  {$\text{sgn}(c) =-1$};
\draw (205.83,387.01) node [anchor=north west][inner sep=0.75pt]  [font=\large] [align=left] {{\fontfamily{ptm}\selectfont (a)}};
\draw (360.53,387.01) node [anchor=north west][inner sep=0.75pt]  [font=\large] [align=left] {{\fontfamily{ptm}\selectfont (b)}};
\draw (462.53,387.01) node [anchor=north west][inner sep=0.75pt]  [font=\large] [align=left] {{\fontfamily{ptm}\selectfont (c)}};
\draw (558.53,387.01) node [anchor=north west][inner sep=0.75pt]  [font=\large] [align=left] {{\fontfamily{ptm}\selectfont (d)}};
\end{tikzpicture}
\caption{Crossings: (a) classical, (b) virtual, (c) flat and (d) singular.} \label{fig4}
\end{center}
\end{figure}
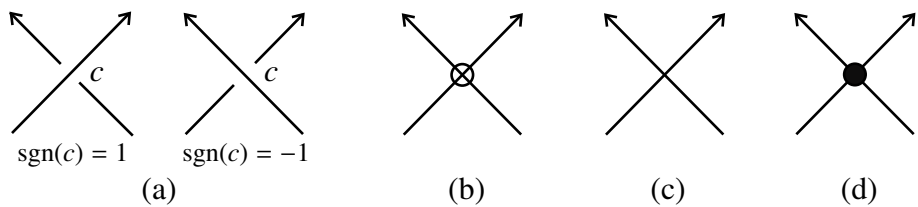
For any classical crossing $c$ there is defined a sign function $\operatorname{sgn}(c) \in \{ 1,-1\}$ as presented in Fig.~\ref{fig4}~(a). For a knotoid diagram $D$, either classical or virtual, the \textit{writhe} $w(D)$ is defined as 
\begin{equation}
w(D) = \sum_{c \in D} \mathrm{sgn} (c), 
\end{equation}
where the sum is taken over all classical crossings.

By \textit{generalized Reidemeister moves} we call a collection 
$
\textit{g} \mathcal{R} = \{ \Omega_1, \Omega_2, \Omega_3, \Omega_1^v, \Omega_2^v, \Omega_3^v, \Omega_3^m, \Omega_v \} 
$  
of Reidemeister moves $\{\Omega_1, \Omega_2, \Omega_3\}$, see Fig.~\ref{fig2}, virtual Reidemeister moves $\{ \Omega_1^v, \Omega_2^v, \Omega_3^v\}$, the mixed move $\Omega_3^{m}$ in which classical and virtual crossings are involved, and the move $\Omega_v$ enables to slide the strand which is adjacent to the tail or the head, see Fig.~\ref{fig5}. Since arcs of virtual knotoids are oriented, we consider oriented versions of generalized Reidemeister moves with all possible orientations.  
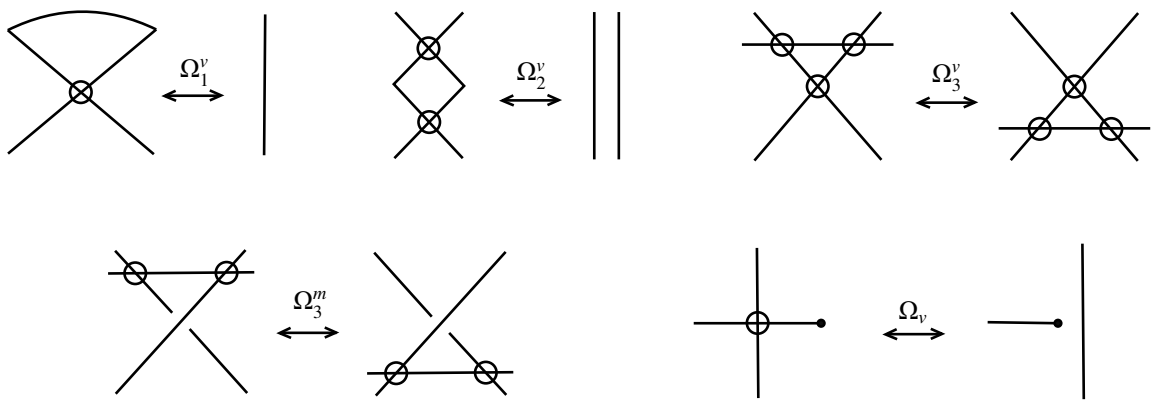
\begin{figure}[!htbp]
\begin{center}
\tikzset{every picture/.style={line width=1.0pt}} 
\begin{tikzpicture}[x=0.75pt,y=0.75pt,yscale=-1,xscale=1]	
\draw    (13.45,486.3) -- (87.13,548.9) ;
\draw    (143.28,478.35) -- (142.73,549.43) ;
\draw    (208.43,550.92) -- (243.59,514.45) -- (208.9,477.32) ;
\draw    (243.12,477.36) -- (208.15,513.55) -- (242.92,550.92) ;
\draw    (309.25,477.16) -- (309.25,551.52) ;
\draw    (321.63,477.16) -- (321.63,551.52) ;
\draw    (453.88,477.16) -- (390.01,552.12) ;
\draw    (389.82,477.51) -- (454.07,551.76) ;
\draw    (383.71,493.57) -- (460.16,493.57) ;
\draw    (133.36,597.29) -- (67.91,669.82) ;
\draw    (64.04,609.1) -- (137.77,608.5) ;
\draw    (359.4,634.22) -- (423.66,634.22) ;
\draw    (391.28,596.2) -- (391.98,672) ;
\draw    (507.64,633.7) -- (543.11,634.22) ;
\draw    (555.43,594.06) -- (556.13,672.55) ;
\draw   (434.75,493.5) .. controls (434.75,490.51) and (437.17,488.08) .. (440.17,488.08) .. controls (443.16,488.08) and (445.59,490.51) .. (445.59,493.5) .. controls (445.59,496.49) and (443.16,498.92) .. (440.17,498.92) .. controls (437.17,498.92) and (434.75,496.49) .. (434.75,493.5) -- cycle ;
\draw   (398.64,493.69) .. controls (398.64,490.7) and (401.06,488.27) .. (404.06,488.27) .. controls (407.05,488.27) and (409.47,490.7) .. (409.47,493.69) .. controls (409.47,496.68) and (407.05,499.11) .. (404.06,499.11) .. controls (401.06,499.11) and (398.64,496.68) .. (398.64,493.69) -- cycle ;
\draw   (416.53,514.64) .. controls (416.53,511.64) and (418.95,509.22) .. (421.94,509.22) .. controls (424.94,509.22) and (427.36,511.64) .. (427.36,514.64) .. controls (427.36,517.63) and (424.94,520.06) .. (421.94,520.06) .. controls (418.95,520.06) and (416.53,517.63) .. (416.53,514.64) -- cycle ;
\draw   (203.92,659.45) .. controls (203.92,656.46) and (206.35,654.04) .. (209.34,654.04) .. controls (212.33,654.04) and (214.76,656.46) .. (214.76,659.45) .. controls (214.76,662.45) and (212.33,664.87) .. (209.34,664.87) .. controls (206.35,664.87) and (203.92,662.45) .. (203.92,659.45) -- cycle ;
\draw   (220.22,495.46) .. controls (220.22,492.46) and (222.64,490.04) .. (225.64,490.04) .. controls (228.63,490.04) and (231.05,492.46) .. (231.05,495.46) .. controls (231.05,498.45) and (228.63,500.87) .. (225.64,500.87) .. controls (222.64,500.87) and (220.22,498.45) .. (220.22,495.46) -- cycle ;
\draw   (220.59,532.68) .. controls (220.59,529.69) and (223.02,527.27) .. (226.01,527.27) .. controls (229.01,527.27) and (231.43,529.69) .. (231.43,532.68) .. controls (231.43,535.68) and (229.01,538.1) .. (226.01,538.1) .. controls (223.02,538.1) and (220.59,535.68) .. (220.59,532.68) -- cycle ;
\draw   (386.21,634.1) .. controls (386.21,631.11) and (388.63,628.68) .. (391.63,628.68) .. controls (394.62,628.68) and (397.05,631.11) .. (397.05,634.1) .. controls (397.05,637.09) and (394.62,639.52) .. (391.63,639.52) .. controls (388.63,639.52) and (386.21,637.09) .. (386.21,634.1) -- cycle ;
\draw   (44.87,517.6) .. controls (44.87,514.61) and (47.3,512.18) .. (50.29,512.18) .. controls (53.28,512.18) and (55.71,514.61) .. (55.71,517.6) .. controls (55.71,520.59) and (53.28,523.02) .. (50.29,523.02) .. controls (47.3,523.02) and (44.87,520.59) .. (44.87,517.6) -- cycle ;
\draw    (67.58,597.64) -- (96.38,628.94) ;
\draw    (105.17,637.14) -- (134.2,669.45) ;
\draw   (249.12,659.15) .. controls (249.12,656.15) and (251.55,653.73) .. (254.54,653.73) .. controls (257.53,653.73) and (259.96,656.15) .. (259.96,659.15) .. controls (259.96,662.14) and (257.53,664.56) .. (254.54,664.56) .. controls (251.55,664.56) and (249.12,662.14) .. (249.12,659.15) -- cycle ;
\draw    (13.45,486.3) .. controls (38.51,473.92) and (63.42,473.92) .. (88.33,485.95) ;
\draw    (13.59,548.61) -- (88.33,485.95) ;
\draw   (115.33,516.7) -- (121.04,519.22) -- (115.33,521.75) ;
\draw    (120.32,519.39) -- (93.22,519.39) ;
\draw   (98.33,521.93) -- (92.51,519.52) -- (98.1,516.88) ;
\draw   (285.65,519.05) -- (291.36,521.58) -- (285.65,524.11) ;
\draw    (290.64,521.74) -- (263.54,521.74) ;
\draw   (268.65,524.28) -- (262.83,521.87) -- (268.41,519.23) ;
\draw    (583.36,552.27) -- (519.48,477.31) ;
\draw    (519.3,551.92) -- (583.54,477.67) ;
\draw    (513.19,535.86) -- (589.64,535.86) ;
\draw   (528.47,536.11) .. controls (528.47,539.11) and (530.9,541.53) .. (533.89,541.53) .. controls (536.88,541.53) and (539.31,539.11) .. (539.31,536.11) .. controls (539.31,533.12) and (536.88,530.7) .. (533.89,530.7) .. controls (530.9,530.7) and (528.47,533.12) .. (528.47,536.11) -- cycle ;
\draw   (564.61,536.17) .. controls (564.61,539.17) and (567.03,541.59) .. (570.03,541.59) .. controls (573.02,541.59) and (575.45,539.17) .. (575.45,536.17) .. controls (575.45,533.18) and (573.02,530.75) .. (570.03,530.75) .. controls (567.03,530.75) and (564.61,533.18) .. (564.61,536.17) -- cycle ;
\draw   (546,514.79) .. controls (546,517.78) and (548.43,520.21) .. (551.42,520.21) .. controls (554.41,520.21) and (556.84,517.78) .. (556.84,514.79) .. controls (556.84,511.8) and (554.41,509.37) .. (551.42,509.37) .. controls (548.43,509.37) and (546,511.8) .. (546,514.79) -- cycle ;
\draw   (495.58,520.19) -- (501.29,522.72) -- (495.58,525.25) ;
\draw    (500.58,522.88) -- (473.47,522.88) ;
\draw   (478.58,525.42) -- (472.76,523.01) -- (478.35,520.37) ;
\draw    (199.09,670.76) -- (264.55,598.22) ;
\draw    (268.41,658.94) -- (194.68,659.55) ;
\draw   (83.15,609.01) .. controls (83.15,612) and (80.73,614.42) .. (77.73,614.42) .. controls (74.74,614.42) and (72.32,612) .. (72.32,609.01) .. controls (72.32,606.01) and (74.74,603.59) .. (77.73,603.59) .. controls (80.73,603.59) and (83.15,606.01) .. (83.15,609.01) -- cycle ;
\draw   (129.2,608.76) .. controls (129.2,611.75) and (126.78,614.17) .. (123.78,614.17) .. controls (120.79,614.17) and (118.37,611.75) .. (118.37,608.76) .. controls (118.37,605.76) and (120.79,603.34) .. (123.78,603.34) .. controls (126.78,603.34) and (129.2,605.76) .. (129.2,608.76) -- cycle ;
\draw    (264.87,670.4) -- (236.07,639.1) ;
\draw    (227.29,630.91) -- (198.26,598.59) ;
\draw   (173.32,636.38) -- (179.02,638.91) -- (173.32,641.44) ;
\draw    (179.31,639.07) -- (152.21,639.07) ;
\draw   (156.32,641.62) -- (150.5,639.21) -- (156.08,636.56) ;
\draw   (478.98,637.05) -- (484.69,639.58) -- (478.98,642.11) ;
\draw    (483.98,639.74) -- (456.87,639.74) ;
\draw   (461.98,642.28) -- (456.16,639.87) -- (461.75,637.23) ;
\draw  [fill={rgb, 255:red, 13; green, 13; blue, 13 }  ,fill opacity=1 ] (422,634.22) .. controls (422,633.31) and (422.75,632.57) .. (423.66,632.57) .. controls (424.58,632.57) and (425.32,633.31) .. (425.32,634.22) .. controls (425.32,635.14) and (424.58,635.88) .. (423.66,635.88) .. controls (422.75,635.88) and (422,635.14) .. (422,634.22) -- cycle ;
\draw  [fill={rgb, 255:red, 13; green, 13; blue, 13 }  ,fill opacity=1 ] (541.45,634.22) .. controls (541.45,633.31) and (542.19,632.57) .. (543.11,632.57) .. controls (544.02,632.57) and (544.76,633.31) .. (544.76,634.22) .. controls (544.76,635.14) and (544.02,635.88) .. (543.11,635.88) .. controls (542.19,635.88) and (541.45,635.14) .. (541.45,634.22) -- cycle ;

\draw (98.76,498.96) node [anchor=north west][inner sep=0.75pt]  [font=\small]  {$\Omega_{1}^{v}$};
\draw (477.95,501.53) node [anchor=north west][inner sep=0.75pt]  [font=\small]  {$\Omega_{3}^{v}$};
\draw (268.58,500.81) node [anchor=north west][inner sep=0.75pt]  [font=\small]  {$\Omega_{2}^{v}$};
\draw (155.83,616.28) node [anchor=north west][inner sep=0.75pt]  [font=\small]  {$\Omega_{3}^{m}$};
\draw (461.19,621.19) node [anchor=north west][inner sep=0.75pt]  [font=\small]  {$\Omega_{v}$};
\end{tikzpicture}
\caption{Virtual Reidemeister moves $\Omega_1^v$, $\Omega_2^v$, $\Omega_3^v$, the mixed move $\Omega_3^{m}$ and $\Omega_{v}$}-move. \label{fig5}
\end{center}
\end{figure}

\begin{definition}{\rm 
Two virtual knotoid diagrams in $S^2$ are said to be \textit{equivalent} if they are related to each other by a finite sequence of moves, where each move is either from $\textit{g} \mathcal{R}$ or an isotopy of $S^2$. Their equivalence classes are called \textit{virtual knotoids}. 
}
\end{definition}

It is useful to encode knotoid diagrams by \textit{Gauss diagrams}, see \cite{FL, FLV}. A Gauss diagram $G(K)$ of a knotoid diagram $K$ is a counterclockwise oriented arc with chords connecting pre-images of crossings of~$K$. The chord of $G(K)$ corresponding to a crossing $c\in K$ is also denoted by $c$. A chord gets a direction from the over-crossing to the under-crossing.  The starting point of a chord $c$ according to the $\operatorname{sgn}(c)$ of the corresponding crossing of~$K$, also denoted by  $\operatorname{sgn}(c)$, see Fig.~\ref{fig6}. 
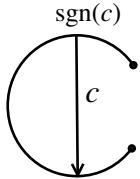
\begin{figure}[htbp]
\begin{center}
\tikzset{every picture/.style={line width=1.0pt}} 
\begin{tikzpicture}[x=0.75pt,y=0.75pt,yscale=-1.2,xscale=1.2]
\draw  [draw opacity=0] (293.09,96.51) .. controls (286.72,105.05) and (276.65,110.56) .. (265.31,110.56) .. controls (246.05,110.56) and (230.44,94.65) .. (230.44,75.03) .. controls (230.44,55.4) and (246.05,39.49) .. (265.31,39.49) .. controls (277.12,39.49) and (287.56,45.48) .. (293.87,54.63) -- (265.31,75.03) -- cycle ; \draw   (293.09,96.51) .. controls (286.72,105.05) and (276.65,110.56) .. (265.31,110.56) .. controls (246.05,110.56) and (230.44,94.65) .. (230.44,75.03) .. controls (230.44,55.4) and (246.05,39.49) .. (265.31,39.49) .. controls (277.12,39.49) and (287.56,45.48) .. (293.87,54.63) ;  
\draw    (265.05,39.87) -- (265.73,110.21) ;
\draw   (268.58,104.36) -- (265.78,110.05) -- (262.83,104.44) ;
\draw  [fill={rgb, 255:red, 13; green, 13; blue, 13 }  ,fill opacity=1 ] (292.36,54.56) .. controls (292.36,53.75) and (293.01,53.09) .. (293.82,53.09) .. controls (294.62,53.09) and (295.28,53.75) .. (295.28,54.56) .. controls (295.28,55.37) and (294.62,56.02) .. (293.82,56.02) .. controls (293.01,56.02) and (292.36,55.37) .. (292.36,54.56) -- cycle ;
\draw  [fill={rgb, 255:red, 13; green, 13; blue, 13 }  ,fill opacity=1 ] (291.58,96.59) .. controls (291.58,95.78) and (292.23,95.12) .. (293.03,95.12) .. controls (293.84,95.12) and (294.49,95.78) .. (294.49,96.59) .. controls (294.49,97.4) and (293.84,98.05) .. (293.03,98.05) .. controls (292.23,98.05) and (291.58,97.4) .. (291.58,96.59) -- cycle ;
\draw (252.7,21.59) node [anchor=north west][inner sep=0.75pt]  [font=\small]  {${\rm sgn}(c)$};
\draw (268.07,64.54) node [anchor=north west][inner sep=0.75pt]  [font=\large]  {$c$};				
\end{tikzpicture}
\caption{Elements of a Gauss diagram $G(K)$.\label{fig6}}		
\end{center}
\end{figure}

Virtual knotoid diagrams can also be encoded by Gauss diagrams. Namely, the Gauss diagram of a virtual knotoid is constructed in the same way as for a classical knotoid, but all virtual crossings are disregarded.

One can consider a weaker equivalence relation of virtual knotoids that is related to the homotopy of strings in thickened surfaces. While homotopy is a trivial notion of equivalence for classical knotoids, it is highly non-trivial for virtual knotoids. 

\begin{definition} {\rm 
A \textit{crossing change} is a move that switches the under- and over-types of a crossing in a diagram.  
Two virtual knotoid diagrams are said to be \textit{homotopic} if they are related by a finite sequence of moves, where each move is either from $\textit{g}\mathcal{R}$ or is a crossing changes. 
}
\end{definition}

The crossing change replaces a negative crossing for a positive one and vice versa, and it is equivalent to allowing one strand of a knotoid to pass through another. 

\subsection{Flat knotoids and flat virtual knotoids}

A crossing is called \textit{flat} if we do not distinguish the under- and over- types of the crossing. A flat crossing is indicated as in Fig.~\ref{fig4} (c).  

\begin{definition}{\rm 
A \textit{flat knotoid diagram} is a knotoid diagram in $\Sigma$ with each classical crossing replaced by a flat crossing.
}
\end{definition}

\textit{Flat Reidemeister moves} $\textit{f}\mathcal{R} = \{ f\Omega_1, f\Omega_2, f\Omega_3\}$ are defined on flat knotoid diagrams by ignoring the under/over-crossing information at the crossings of the move from $\mathcal{R}$, see Fig.~\ref{fig7}.

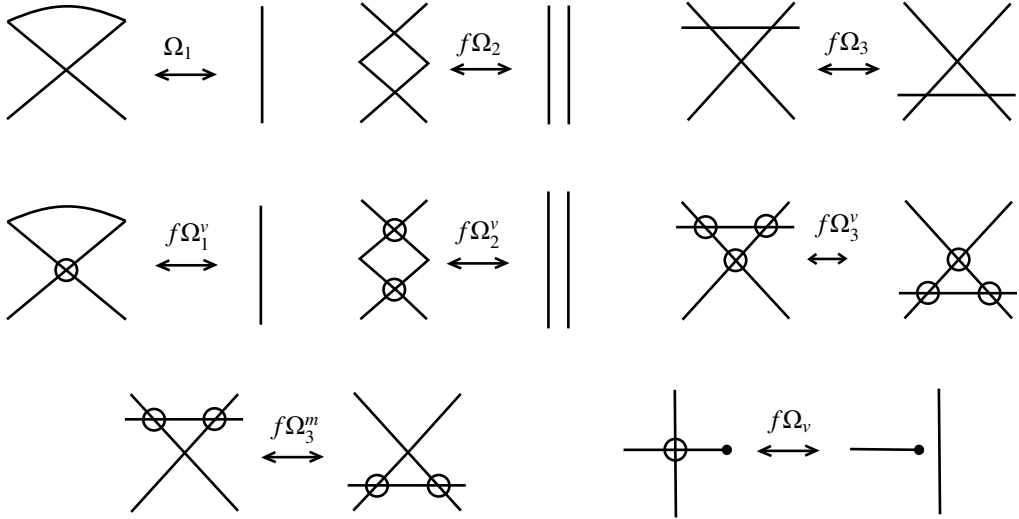
\begin{figure}[htbp]
\begin{center}
\tikzset{every picture/.style={line width=1.0pt}}  
\begin{tikzpicture}[x=0.75pt,y=0.75pt,yscale=-1,xscale=1]
\draw    (512.86,189.55) -- (567.06,130.63) ;
\draw    (60.45,140.7) -- (120.21,190.16) ;
\draw    (512.19,89.55) -- (566.39,30.63) ;
\draw    (566.43,89.56) -- (512.54,30.07) ;
\draw    (403.19,30.07) -- (457.39,89) ;
\draw    (400.53,130.74) -- (454.73,189.66) ;
\draw    (188.88,31.93) -- (188.88,91.04) ;
\draw    (238.43,90.65) -- (272.64,60.84) -- (238.89,30.49) ;
\draw    (257.47,43.91) -- (238.15,60.1) -- (257.47,76.83) ;
\draw    (272.19,30.52) -- (257.47,43.91) ;
\draw    (257.47,76.83) -- (271.98,90.65) ;
\draw    (333.52,31.63) -- (333.52,91.61) ;
\draw    (343.56,31.63) -- (343.56,91.61) ;
\draw    (457.43,30.07) -- (403.54,89.56) ;
\draw    (400.35,42.82) -- (459.93,42.81) ;
\draw    (188.24,132.64) -- (188.24,192.74) ;
\draw    (333.28,125.47) -- (333.28,194.58) ;
\draw    (343.32,125.47) -- (343.32,194.58) ;
\draw   (479.39,156.84) -- (483.02,159.36) -- (479.39,161.89) ;
\draw    (482.57,159.53) -- (465.34,159.53) ;
\draw   (468.58,162.07) -- (464.89,159.66) -- (468.43,157.02) ;
\draw   (84.71,165.16) .. controls (84.71,162.17) and (87.14,159.74) .. (90.13,159.74) .. controls (93.12,159.74) and (95.55,162.17) .. (95.55,165.16) .. controls (95.55,168.15) and (93.12,170.58) .. (90.13,170.58) .. controls (87.14,170.58) and (84.71,168.15) .. (84.71,165.16) -- cycle ;
\draw   (250.34,144.95) .. controls (250.34,141.96) and (252.77,139.53) .. (255.76,139.53) .. controls (258.76,139.53) and (261.18,141.96) .. (261.18,144.95) .. controls (261.18,147.94) and (258.76,150.37) .. (255.76,150.37) .. controls (252.77,150.37) and (250.34,147.94) .. (250.34,144.95) -- cycle ;
\draw   (250.12,175.03) .. controls (250.12,172.04) and (252.54,169.61) .. (255.54,169.61) .. controls (258.53,169.61) and (260.95,172.04) .. (260.95,175.03) .. controls (260.95,178.02) and (258.53,180.45) .. (255.54,180.45) .. controls (252.54,180.45) and (250.12,178.02) .. (250.12,175.03) -- cycle ;
\draw   (422.21,160.2) .. controls (422.21,157.21) and (424.64,154.78) .. (427.63,154.78) .. controls (430.62,154.78) and (433.05,157.21) .. (433.05,160.2) .. controls (433.05,163.19) and (430.62,165.62) .. (427.63,165.62) .. controls (424.64,165.62) and (422.21,163.19) .. (422.21,160.2) -- cycle ;
\draw   (437.68,142.85) .. controls (437.68,139.86) and (440.1,137.43) .. (443.1,137.43) .. controls (446.09,137.43) and (448.51,139.86) .. (448.51,142.85) .. controls (448.51,145.84) and (446.09,148.27) .. (443.1,148.27) .. controls (440.1,148.27) and (437.68,145.84) .. (437.68,142.85) -- cycle ;
\draw   (407.15,143.49) .. controls (407.15,140.49) and (409.58,138.07) .. (412.57,138.07) .. controls (415.56,138.07) and (417.99,140.49) .. (417.99,143.49) .. controls (417.99,146.48) and (415.56,148.9) .. (412.57,148.9) .. controls (409.58,148.9) and (407.15,146.48) .. (407.15,143.49) -- cycle ;
\draw   (550.35,176.95) .. controls (550.35,173.96) and (552.78,171.53) .. (555.77,171.53) .. controls (558.77,171.53) and (561.19,173.96) .. (561.19,176.95) .. controls (561.19,179.94) and (558.77,182.37) .. (555.77,182.37) .. controls (552.78,182.37) and (550.35,179.94) .. (550.35,176.95) -- cycle ;
\draw   (519.28,176.68) .. controls (519.28,173.68) and (521.7,171.26) .. (524.7,171.26) .. controls (527.69,171.26) and (530.12,173.68) .. (530.12,176.68) .. controls (530.12,179.67) and (527.69,182.09) .. (524.7,182.09) .. controls (521.7,182.09) and (519.28,179.67) .. (519.28,176.68) -- cycle ;
\draw   (534.74,159.81) .. controls (534.74,156.82) and (537.16,154.39) .. (540.15,154.39) .. controls (543.15,154.39) and (545.57,156.82) .. (545.57,159.81) .. controls (545.57,162.81) and (543.15,165.23) .. (540.15,165.23) .. controls (537.16,165.23) and (534.74,162.81) .. (534.74,159.81) -- cycle ;
\draw    (371.14,255.89) -- (423.28,255.89) ;
\draw  [fill={rgb, 255:red, 13; green, 13; blue, 13 }  ,fill opacity=1 ] (421.4,255.89) .. controls (421.4,254.81) and (422.24,253.93) .. (423.28,253.93) .. controls (424.32,253.93) and (425.16,254.81) .. (425.16,255.89) .. controls (425.16,256.97) and (424.32,257.85) .. (423.28,257.85) .. controls (422.24,257.85) and (421.4,256.97) .. (421.4,255.89) -- cycle ;
\draw    (397.01,225.6) -- (397.57,290) ;
\draw    (485.42,255.5) -- (520.19,255.89) ;
\draw  [fill={rgb, 255:red, 13; green, 13; blue, 13 }  ,fill opacity=1 ] (518.32,255.89) .. controls (518.32,254.81) and (519.16,253.93) .. (520.19,253.93) .. controls (521.23,253.93) and (522.07,254.81) .. (522.07,255.89) .. controls (522.07,256.97) and (521.23,257.85) .. (520.19,257.85) .. controls (519.16,257.85) and (518.32,256.97) .. (518.32,255.89) -- cycle ;
\draw    (530.19,225) -- (530.76,288.41) ;
\draw   (391.87,255.8) .. controls (391.87,252.81) and (394.3,250.38) .. (397.29,250.38) .. controls (400.28,250.38) and (402.71,252.81) .. (402.71,255.8) .. controls (402.71,258.79) and (400.28,261.22) .. (397.29,261.22) .. controls (394.3,261.22) and (391.87,258.79) .. (391.87,255.8) -- cycle ;
\draw    (509.35,76.81) -- (568.93,76.82) ;
\draw    (454.77,130.74) -- (400.88,190.22) ;
\draw    (397.68,143.48) -- (457.27,143.47) ;
\draw    (567.1,189.56) -- (513.21,130.07) ;
\draw    (510.01,176.81) -- (569.6,176.82) ;
\draw    (60.45,140.7) .. controls (83.24,130.56) and (97.38,130.46) .. (120.05,140.33) ;
\draw    (60.21,189.99) -- (120.05,140.33) ;
\draw    (238.43,189.94) -- (272.64,160.12) -- (238.89,129.78) ;
\draw    (257.47,143.19) -- (238.15,159.39) -- (257.47,176.12) ;
\draw    (272.19,129.81) -- (257.47,143.19) ;
\draw    (257.47,176.12) -- (271.98,189.94) ;
\draw    (122.53,227.74) -- (176.73,286.66) ;
\draw   (159.48,240.35) .. controls (159.48,237.36) and (161.9,234.93) .. (164.9,234.93) .. controls (167.89,234.93) and (170.31,237.36) .. (170.31,240.35) .. controls (170.31,243.34) and (167.89,245.77) .. (164.9,245.77) .. controls (161.9,245.77) and (159.48,243.34) .. (159.48,240.35) -- cycle ;
\draw   (128.95,240.69) .. controls (128.95,237.69) and (131.38,235.27) .. (134.37,235.27) .. controls (137.36,235.27) and (139.79,237.69) .. (139.79,240.69) .. controls (139.79,243.68) and (137.36,246.1) .. (134.37,246.1) .. controls (131.38,246.1) and (128.95,243.68) .. (128.95,240.69) -- cycle ;
\draw   (272.52,274.01) .. controls (272.52,271.02) and (274.95,268.6) .. (277.94,268.6) .. controls (280.93,268.6) and (283.36,271.02) .. (283.36,274.01) .. controls (283.36,277.01) and (280.93,279.43) .. (277.94,279.43) .. controls (274.95,279.43) and (272.52,277.01) .. (272.52,274.01) -- cycle ;
\draw   (241.41,274.01) .. controls (241.41,271.02) and (243.84,268.6) .. (246.83,268.6) .. controls (249.82,268.6) and (252.25,271.02) .. (252.25,274.01) .. controls (252.25,277.01) and (249.82,279.43) .. (246.83,279.43) .. controls (243.84,279.43) and (241.41,277.01) .. (241.41,274.01) -- cycle ;
\draw    (176.77,227.74) -- (122.88,287.22) ;
\draw    (119.68,240.48) -- (179.27,240.47) ;
\draw    (263.97,255.54) -- (289.06,227.63) ;
\draw    (289.1,286.56) -- (235.21,227.07) ;
\draw    (234.86,286.55) -- (263.97,255.54) ;
\draw    (232.01,273.81) -- (291.6,273.82) ;
\draw   (158.8,63.89) -- (164.5,66.42) -- (158.8,68.94) ;
\draw    (163.79,66.58) -- (136.69,66.58) ;
\draw   (141.8,69.12) -- (135.98,66.71) -- (141.56,64.07) ;
\draw   (307.8,61.09) -- (313.5,63.62) -- (307.8,66.14) ;
\draw    (312.79,63.78) -- (285.69,63.78) ;
\draw   (290.8,66.32) -- (284.98,63.91) -- (290.56,61.27) ;
\draw   (493.27,61.09) -- (498.98,63.62) -- (493.27,66.14) ;
\draw    (498.26,63.78) -- (471.16,63.78) ;
\draw   (476.27,66.32) -- (470.45,63.91) -- (476.04,61.27) ;
\draw   (159.3,160.05) -- (165,162.58) -- (159.3,165.11) ;
\draw    (164.29,162.74) -- (137.19,162.74) ;
\draw   (142.3,165.29) -- (136.48,162.88) -- (142.06,160.23) ;
\draw    (60.45,39.7) -- (120.21,89.16) ;
\draw    (60.45,39.7) .. controls (83.24,29.56) and (97.38,29.46) .. (120.05,39.33) ;
\draw    (60.21,88.99) -- (120.05,39.33) ;
\draw   (306.3,159.09) -- (312,161.62) -- (306.3,164.14) ;
\draw    (311.29,161.78) -- (284.19,161.78) ;
\draw   (289.3,164.32) -- (283.48,161.91) -- (289.06,159.27) ;
\draw   (212.8,256.88) -- (218.5,259.41) -- (212.8,261.93) ;
\draw    (217.79,259.57) -- (190.69,259.57) ;
\draw   (195.8,262.11) -- (189.98,259.7) -- (195.56,257.06) ;
\draw   (462.76,253.73) -- (468.47,256.25) -- (462.76,258.78) ;
\draw    (467.76,256.42) -- (440.65,256.42) ;
\draw   (445.76,258.96) -- (439.94,256.55) -- (445.53,253.91) ;

\draw (137.72,45.98) node [anchor=north west][inner sep=0.75pt]  [font=\small]  {$f\Omega_{1}$};
\draw (472.02,43.41) node [anchor=north west][inner sep=0.75pt]  [font=\small]  {$f\Omega_{3}$};
\draw (286.3,43.81) node [anchor=north west][inner sep=0.75pt]  [font=\small]  {$f\Omega_{2}$};
\draw (138.48,139.79) node [anchor=north west][inner sep=0.75pt]  [font=\small]  {$f\Omega_{1}^{v}$};
\draw (466.05,135.2) node [anchor=north west][inner sep=0.75pt]  [font=\small]  {$f\Omega_{3}^{v}$};
\draw (284.63,138.7) node [anchor=north west][inner sep=0.75pt]  [font=\small]  {$f\Omega_{2}^{v}$};
\draw (442.19,234.1) node [anchor=north west][inner sep=0.75pt]  [font=\small]  {$f\Omega_{v}$};
\draw (191.39,237.2) node [anchor=north west][inner sep=0.75pt]  [font=\small]  {$f\Omega_{3}^{m}$};
\end{tikzpicture}
\caption{Generalized flat Reidemeister moves.} \label{fig7}	
\end{center}
\end{figure}

\begin{definition} {\rm 
Two flat knotoid diagrams in $\Sigma$ are said to be \textit{flat-equivalent} if they are related to each other by a finite sequence of moves, where each move is either from $\textit{f}\mathcal{R}$ or an  isotopy of $\Sigma$. A \textit{flat knotoid in $\Sigma$} is defined to be a flat-equivalence class of flat knotoid diagrams in $\Sigma$. 
}
\end{definition}

We recall that the following property holds for flat knotoids in $S^2$, see~\cite[Prop.~3.6]{GK}, but does not hold for flat knotoids in $\mathbb R^2$. 

\begin{proposition} [\cite{GK}] \label{prop2}
Any flat knotoid in $S^2$ is flat-equivalent to the trivial knotoid.
\end{proposition}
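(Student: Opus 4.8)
The plan is to argue by induction on the number $n$ of (flat) crossings of a flat knotoid diagram $K$ in $S^2$, using only the moves in $f\mathcal{R}=\{f\Omega_1,f\Omega_2,f\Omega_3\}$ together with ambient isotopy of $S^2$. The base case $n=0$ is immediate: a flat knotoid diagram with no crossings is an embedded arc in $S^2$, and any such arc is isotopic to the trivial knotoid. The whole content is therefore the inductive step: given $n\ge 1$, I want to produce a sequence of flat moves and $S^2$-isotopies after which $K$ has strictly fewer crossings. A useful way to view flat equivalence here is as a regular homotopy of the immersed arc realized by the three flat Reidemeister moves and isotopy of $S^2$, subject to the single constraint that no endpoint (strand tip) is ever pushed across another strand. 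This constraint is exactly what rules out a forbidden-type trivialization, and it must be respected at every stage.

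To locate a crossing to remove, I would travel along $K$ from the head $h$ and let $c$ be the first crossing met; the sub-arc $\alpha$ from $h$ to $c$ is then embedded and its interior is disjoint from the rest of $K$. At $c$ the arc $\alpha$ meets a transverse strand. The key step, and the only place where the use of $S^2$ (rather than $\mathbb{R}^2$) is essential, is a spherical sweep of $\alpha$. Since the complement of $K$ in $S^2$ is a disjoint union of open disks and $h$ is a univalent endpoint lying on the boundary of one such face $R_0$, I would realize $S^2$ as $\mathbb{R}^2\cup\{\infty\}$ with $\infty\in R_0$, so that $R_0$ becomes the unbounded region and $\alpha$ borders it. The head-arc $\alpha$ can then be swept out through $R_0$ and brought back around the sphere, approaching $c$ from the opposite side; combined with an $f\Omega_2$ move (or, once $c$ has been turned into a monogon, with $f\Omega_1$) this unhooks $\alpha$ from the transverse strand and eliminates $c$. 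Throughout the sweep the tip $h$ is kept inside $R_0$ and away from the rest of $K$, so no strand tip is ever dragged across another strand. This reduces the crossing number by at least one, and the induction closes.

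The main obstacle is making the spherical sweep rigorous, that is, certifying that it genuinely lowers the crossing count rather than merely relocating crossings. The danger is that pushing $\alpha$ across the diagram introduces new crossings by $f\Omega_2$ that offset the removal of $c$, so the sweep must be organized so the net effect is a strict decrease. I would control this with an innermost choice: rather than the globally first crossing, select a crossing $c$ for which the embedded head-arc together with the outer region $R_0$ bounds a disk meeting $K$ in the simplest possible way, and sweep across that disk so that only $c$ is cancelled. The contrast with $\mathbb{R}^2$ both explains the hypothesis and guides the write-up: on the plane the region at infinity is a distinguished guarded face that cannot serve as $R_0$, the sweep is unavailable, and the statement genuinely fails; on $S^2$ every face is interchangeable with the region at infinity, and this interchangeability is precisely what powers the reduction. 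The remaining work — checking the local pictures for the $f\Omega_1$ and $f\Omega_2$ cancellations and verifying that the endpoint constraint holds at each intermediate diagram — is routine.
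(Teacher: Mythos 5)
There is no proof in the paper to compare against: the paper quotes this proposition from \cite{GK} (their Prop.~3.6) without argument, so your proposal must stand on its own, and it has a genuine gap in the inductive step. The constraint you correctly identify --- the head $h$ may never cross a strand --- confines $h$, throughout any deformation that keeps $\gamma' := K \setminus \alpha$ fixed, to the connected component $W$ of $S^2 \setminus \gamma'$ in which it starts. Eliminating the crossing $c$ by moving only the head-arc forces the re-routed terminal branch to leave $c$ into one of the two quadrants adjacent to the incoming ray of $\gamma'$ at $c$, hence forces $h$ to reach those quadrants; but those quadrants need not lie in $W$, and when they do not, no sweep whatsoever (through $R_0$, around the back of the sphere, or otherwise) can remove $c$. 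Concretely: let the diagram run from $t$ to a self-crossing $d$, traverse an embedded loop $L$ based at $d$ bounding a disk $\Delta$, re-enter $\Delta$ by crossing $L$ at a crossing $e$, travel inside $\Delta$, exit $\Delta$ by crossing $L$ at $c$, and end at $h$ just outside $\Delta$. Then $c$ is the first crossing from the head and $\alpha$ is a short embedded arc outside $\Delta$, but the quadrants you must reach lie inside $\Delta$. Since $L \subset \gamma'$ separates $S^2$ and $h$ can never cross $\gamma'$, the head is trapped outside $\Delta$; moreover any arc joining the end of $\gamma'$ (inside $\Delta$) to a point outside $\Delta$ meets $L$ at least once, so the total crossing number cannot decrease at all while only $\alpha$ moves. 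Your inductive step makes no progress on this diagram, and the ``innermost choice'' fallback does not help: the obstruction is not bookkeeping of created-versus-cancelled crossings but the topological fact that the head is trapped.

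What actually trivializes such a diagram is motion of strands \emph{other} than the head-arc: pull the strand through $e$ back out of $\Delta$ by an $f\Omega_2$ move (removing $e$ and $c$ together), after which $L$ bounds an empty disk and $f\Omega_1$ removes $d$. The clean proof behind the cited statement runs differently: after an isotopy one may fix the two endpoints, and the flat moves then realize exactly homotopy of the arc through paths whose interior avoids $\{t,h\}$, i.e.\ spanning paths of the annulus $S^2\setminus\{t,h\}$. All such spanning paths are homotopic, because the only invariant, a class in $\pi_1 \cong \mathbb{Z}$, is killed by rotating the path around an endpoint (sliding its end around the puncture) --- a move which, diagrammatically, sweeps strands far from the head around the sphere. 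This also explains the planar failure precisely: $\pi_1(\mathbb{R}^2\setminus\{t,h\})$ is free on $a,b$ and the relevant double coset space $\langle a\rangle \backslash F_2 / \langle b\rangle$ is infinite. Your guiding intuition that on $S^2$ every face is interchangeable with the region at infinity is the right spirit, but a correct write-up must allow the whole diagram, not just $\alpha$, to move.
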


By \textit{generalized flat Reidemeister moves} we call a collection 
$$
\textit{gf}\mathcal{R} = \{f\Omega_1, f\Omega_2, f\Omega_3, f\Omega_1^v, f\Omega_2^v, f\Omega_3^v, f\Omega_3^m, f\Omega_v\}
$$ 
of flat Reidemeister moves $f\Omega_1$, $f\Omega_2$, $f\Omega_3$, flat virtual Reidemeister moves $f\Omega_1^v$, $f\Omega_2^v$, $f\Omega_3^v$, $f\Omega_v$, the flat mixed move $f\Omega_3^m$, see Fig.~\ref{fig7}. Since arcs of flat knotoids are oriented, we consider oriented versions of generalized flat Reidemeister moves with all possible orientations.  

\begin{definition}{\rm 
A \textit{flat virtual knotoid diagram} is defined to be a flat knotoid diagram in $S^2$ with virtual crossings. Two flat virtual knotoid diagrams are said to be \textit{flat-equivalent} if they are related by a finite sequence of moves, where each move is either from $\textit{gf}\mathcal{R}$ or isotopy of $S^2$. A \textit{flat virtual knotoid} is defined as an flat-equivalence class of flat virtual knotoid diagrams.
}
\end{definition}

\begin{remark}{\rm 
The move $f\Phi_{\pm}$, see Fig.~\ref{fig103}, obtained by replacing classical crossings by flat crossings in forbidden moves $\Phi_+$, $\Phi_-$,  and virtual forbidden moves $\Phi _{\textrm{under}}$, $\Phi _{\textrm{over}}$, given by  Fig.~\ref{fig8}, are forbidden for flat virtual knotoid diagrams.
	}
\end{remark}
\begin{figure}[htbp]
\begin{center}
\tikzset{every picture/.style={line width=1.0pt}} 
\begin{tikzpicture}[x=0.75pt,y=0.75pt,yscale=-1,xscale=1]  			

\draw    (50.06,753.51) -- (50.38,839.54) ;
\draw    (12.55,798.24) -- (90.56,797.73) ;
\draw    (188.45,798.51) -- (224.82,798.02) ;
\draw    (234.48,753.75) -- (234.82,840.42) ;
\draw  [fill={rgb, 255:red, 13; green, 13; blue, 13 }  ,fill opacity=1 ] (87.25,797.73) .. controls (87.25,796.88) and (87.99,796.19) .. (88.91,796.19) .. controls (89.82,796.19) and (90.56,796.88) .. (90.56,797.73) .. controls (90.56,798.58) and (89.82,799.27) .. (88.91,799.27) .. controls (87.99,799.27) and (87.25,798.58) .. (87.25,797.73) -- cycle ;
\draw  [fill={rgb, 255:red, 13; green, 13; blue, 13 }  ,fill opacity=1 ] (223.16,798.02) .. controls (223.16,797.1) and (223.9,796.36) .. (224.82,796.36) .. controls (225.74,796.36) and (226.48,797.1) .. (226.48,798.02) .. controls (226.48,798.93) and (225.74,799.68) .. (224.82,799.68) .. controls (223.9,799.68) and (223.16,798.93) .. (223.16,798.02) -- cycle ;

\draw   (150.83,797.51) -- (156.54,800.04) -- (150.83,802.56) ;
\draw    (155.82,800.2) -- (128.72,800.2) ;
\draw   (133.83,802.74) -- (128.01,800.33) -- (133.6,797.69) ;

\draw (130.8,779.59) node [anchor=north west][inner sep=0.75pt]  [font=\small]  {$f\Phi _{\pm}$};
\end{tikzpicture}
\caption{Flat forbidden move $f\Phi_{\pm}$.} \label{fig103}	
\end{center}
\end{figure}
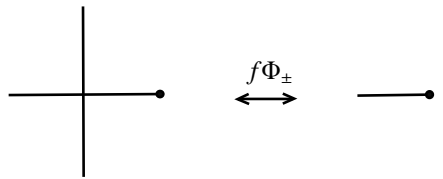

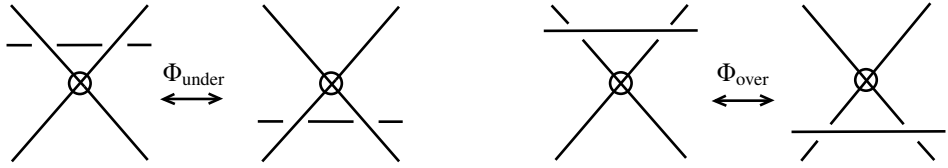
\begin{figure}[htbp]
\begin{center}
\tikzset{every picture/.style={line width=1.0pt}}  
\begin{tikzpicture}[x=0.75pt,y=0.75pt,yscale=-1,xscale=1]
\draw    (126.2,61.69) -- (158.12,98.69) ;
\draw    (158.17,20.58) -- (89.6,99.43) ;
\draw    (89.16,20.58) -- (126.2,61.69) ;
\draw    (147.83,40.64) -- (159.99,40.64) ;
\draw    (112.24,40.73) -- (135.2,40.67) ;
\draw    (86.9,40.76) -- (99.68,40.77) ;
\draw    (377.49,38) -- (429.61,97.66) ;
\draw    (361.8,20.69) -- (370.31,29.91) ;
\draw    (421.99,30.35) -- (430.45,20.41) ;
\draw    (415.57,38) -- (363.78,98.66) ;
\draw    (539.38,80.67) -- (488.47,20.66) ;
\draw    (554.83,98.76) -- (546.5,89.4) ;
\draw    (495.98,89) -- (487.72,99.1) ;
\draw    (502.48,80.67) -- (552.83,19.59) ;
\draw    (435.49,33.33) -- (357.85,33.53) ;
\draw    (560.45,84.25) -- (482.81,84.51) ;
\draw   (118.47,60) .. controls (118.47,57.01) and (120.89,54.59) .. (123.88,54.59) .. controls (126.88,54.59) and (129.3,57.01) .. (129.3,60) .. controls (129.3,63) and (126.88,65.42) .. (123.88,65.42) .. controls (120.89,65.42) and (118.47,63) .. (118.47,60) -- cycle ;
\draw   (514.95,58.31) .. controls (514.95,55.32) and (517.38,52.89) .. (520.37,52.89) .. controls (523.36,52.89) and (525.79,55.32) .. (525.79,58.31) .. controls (525.79,61.3) and (523.36,63.73) .. (520.37,63.73) .. controls (517.38,63.73) and (514.95,61.3) .. (514.95,58.31) -- cycle ;
\draw   (391.3,59.89) .. controls (391.3,56.9) and (393.73,54.47) .. (396.72,54.47) .. controls (399.71,54.47) and (402.14,56.9) .. (402.14,59.89) .. controls (402.14,62.88) and (399.71,65.31) .. (396.72,65.31) .. controls (393.73,65.31) and (391.3,62.88) .. (391.3,59.89) -- cycle ;
\draw    (252.92,58.32) -- (284.84,21.32) ;
\draw    (284.89,99.43) -- (216.32,20.58) ;
\draw    (215.88,99.43) -- (252.92,58.32) ;
\draw    (274.55,79.37) -- (286.71,79.37) ;
\draw    (238.96,79.28) -- (261.92,79.34) ;
\draw    (213.62,79.24) -- (226.4,79.24) ;
\draw   (245.19,60) .. controls (245.19,63) and (247.61,65.42) .. (250.6,65.42) .. controls (253.6,65.42) and (256.02,63) .. (256.02,60) .. controls (256.02,57.01) and (253.6,54.59) .. (250.6,54.59) .. controls (247.61,54.59) and (245.19,57.01) .. (245.19,60) -- cycle ;
\draw   (188.8,65.22) -- (194.5,67.75) -- (188.8,70.27) ;
\draw    (193.79,67.91) -- (166.69,67.91) ;
\draw   (171.8,70.45) -- (165.98,68.04) -- (171.56,65.4) ;
\draw   (466.93,66.08) -- (472.64,68.6) -- (466.93,71.13) ;
\draw    (471.92,68.76) -- (444.82,68.76) ;
\draw   (449.93,71.31) -- (444.11,68.9) -- (449.69,66.26) ;
\draw (164.03,48.43) node [anchor=north west][inner sep=0.75pt]  [font=\small]  {$\Phi _{\text{under}}$};
\draw (443.49,48.43) node [anchor=north west][inner sep=0.75pt]  [font=\small]  {$\Phi _{\text{over}}$};
\end{tikzpicture}
\caption{Virtual forbidden moves $\Phi _{\textrm{under}}$ and $\Phi _{\textrm{over}}$.\label{fig8}}
\end{center}
\end{figure} 

\begin{definition}{\rm 
We say that a virtual knotoid diagram $K$ \textit{overlies} a flat virtual knotoid diagram $\overline{K}$ if $K$ can be  obtained from $\overline{K}$ by choosing a crossing type as over or under for each flat crossing. The flat virtual diagram $\overline{K}$ is said to be the \textit{underlying flat virtual knotoid diagram} of $K$. }
\end{definition}

If a virtual knotoid diagram $K$ overlies a flat virtual knotoid diagram $\overline{K}$ then it is clear that any generalized Reidemeister move on $K$ induces a generalized flat Reidemeister move on $\overline{K}$ by changing all classical crossings to flat crossings.

\subsection{Affine index polynomial invariant of a  knotoid} 
The affine index polynomial was defined for virtual knots and links by Kauffman in~\cite{Kau}. The affine index polynomial for a knotoid, either classical or virtual was defined by  G\"{u}g\"{u}mc\"{u} and Kauffman in~\cite{GK}. 

The affine index polynomial is based on an integer labeling assigned to flat knotoid diagram as follows. A flat knotoid diagram, either classical or virtual, is associated with a graph (resp. virtual graph) where the flat (classical) crossings and the endpoints (the tail and the head) are regarded as vertices of a graph. An \textit{arc} of an oriented flat knotoid diagram is the edge of the graph it represents. Given a virtual knotoid diagram $D$, the labeling of each arc of the underling flat diagram  $\overline{D}$ of $D$, begins with the tail and the first flat crossing. At each flat crossing, the labels of the arcs change by one; if the incoming arc labeled by $b$, where $b \in \mathbb Z$, crosses the crossing towards left then the next arc is labeled by $b+1$, and if the incoming arc labeled by $a$, where $a \in \mathbb Z$, crosses the crossing towards right then the next arc is labeled by $a-1$. There is no change of labels at virtual crossings. Let $c$ be a classical crossing of $D$. Then arcs around a classical crossing $c \in D$ get the same labelings as arcs around the underling flat crossing $\overline{c} \in \overline{D}$. These arc labeling is similar to the Cheng's coloring for a virtual knot diagram, see~\cite{Che}. The integer labeling rule for $D$ and $\overline{D}$ is illustrated in~Fig.~\ref{fig11}. 
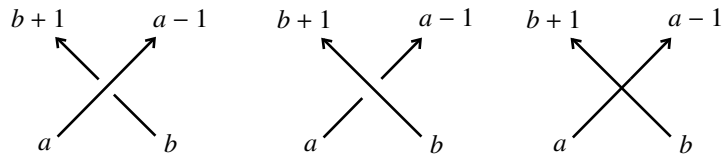
\begin{figure}[!ht]
\begin{center}
\tikzset{every picture/.style={line width=1.0pt}}  
\begin{tikzpicture}[x=0.75pt,y=0.75pt,yscale=-1,xscale=1]	
\draw    (274.04,328.33) -- (323.49,377.42) ;
\draw    (188.66,328.01) -- (139.99,377.45) ;
\draw    (139.89,327.91) -- (160.71,348.56) ;
\draw   (140.19,332.8) -- (139.57,327.94) -- (144.32,328.93) ;
\draw   (184.15,329.05) -- (188.86,327.9) -- (188.42,332.77) ;
\draw    (168.39,356.22) -- (189.34,377) ;
\draw   (274.33,333.22) -- (273.72,328.36) -- (278.47,329.34) ;
\draw   (318.3,329.47) -- (323.01,328.31) -- (322.57,333.19) ;
\draw    (322.8,328.42) -- (303.27,348.3) ; 
\draw    (293.67,358) -- (274.14,377.87) ;
\draw    (399.78,328.33) -- (449.23,377.42) ;
\draw   (400.07,333.22) -- (399.46,328.36) -- (404.21,329.34) ;
\draw   (444.04,329.47) -- (448.75,328.31) -- (448.31,333.19) ;
\draw    (448.54,328.42) -- (429.01,348.3) ;
\draw    (429.01,348.3) -- (399.88,377.87) ;
\draw (128.49,376.53) node [anchor=north west][inner sep=0.75pt]  [font=\small]  {$a$};
\draw (192.1,374.57) node [anchor=north west][inner sep=0.75pt]  [font=\small]  {$b$};
\draw (114.85,311.31) node [anchor=north west][inner sep=0.75pt]  [font=\small]  {$b+1$};
\draw (186.6,311.75) node [anchor=north west][inner sep=0.75pt]  [font=\small]  {$a-1$};
\draw (262.64,376.95) node [anchor=north west][inner sep=0.75pt]  [font=\small]  {$a$};
\draw (326.25,374.98) node [anchor=north west][inner sep=0.75pt]  [font=\small]  {$b$};
\draw (249,311.73) node [anchor=north west][inner sep=0.75pt]  [font=\small]  {$b+1$};
\draw (320.74,311.17) node [anchor=north west][inner sep=0.75pt]  [font=\small]  {$a-1$};
\draw (388.38,376.95) node [anchor=north west][inner sep=0.75pt]  [font=\small]  {$a$};
\draw (451.99,374.98) node [anchor=north west][inner sep=0.75pt]  [font=\small]  {$b$};
\draw (374.74,311.73) node [anchor=north west][inner sep=0.75pt]  [font=\small]  {$b+1$};
\draw (446.48,311.17) node [anchor=north west][inner sep=0.75pt]  [font=\small]  {$a-1$};			
\end{tikzpicture}
\caption{Labeling around classical crossings of $D$ and flat crossing in $\overline{D}$.} \label{fig11}
\end{center}
\end{figure}

If $\zeta =c$ is a classical crossing of $D$ (or $\zeta = \overline{c}$ is a flat crossing of $\overline{D}$), then we define two numbers, $W_+(\zeta)$ and $W_-(\zeta)$ at $\zeta$ resulting the labeling of $D$ (resp. $\overline{D}$) as follows.
$$
W_+ (\zeta) = a - (b+1) \qquad \textrm{and} \qquad W_-(\zeta) = - W_+(\zeta) = b - (a-1). 
$$ 
The \textit{weight} of a classical  crossing $c\in D$ is defined by 
\begin{equation}
W_D(c) =  \mathrm{sgn} (c) \, \Big( a-(b+1)\Big), 
\end{equation}
where $\mathrm{sgn} (c)$ is the sign of $c$.  Hence we can write 
$$
W_D(c) = W_{\mathrm{sgn} (c)} (c) = \mathrm{sgn} (c) W_+(c) = \mathrm{sgn} (c) W_+(\overline{c}).
$$ 
The affine index polynomial of virtual knotoid is defined as following. 

\begin{definition} [\cite{GK}] {\rm 
The \textit{affine index polynomial} of a virtual or classical knotoid diagram $D$ is defined by the following equation:  
\begin{equation} 
P_D (t) = \sum_{c \in D} \mathrm{sgn} (c) \Big(t^{W_D (c)} - 1\Big),
\end{equation} 
where the sum is taken over all classical crossings of $D$. 
}
\end{definition}

The following result was obtained in~\cite[Th.~5.1]{GK}. 

\begin{theorem} [\cite{GK}]   \label{th1}
The affine index polynomial is a virtual and classical knotoid invariant. 
\end{theorem}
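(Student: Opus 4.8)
The plan is to prove that $P_D(t)$ is unchanged under every move of the generalized Reidemeister collection $\textit{g}\mathcal{R}$; since the classical moves $\mathcal{R}$ form a subcollection, this establishes the virtual and the classical statement at once. Throughout I use that $P_D(t)$ is a sum over classical crossings only, and that each summand $\operatorname{sgn}(c)\bigl(t^{W_D(c)}-1\bigr)$ is determined by $\operatorname{sgn}(c)$ together with the two incoming labels $a,b$ at the underlying flat crossing $\overline{c}$, via $W_D(c)=\operatorname{sgn}(c)\,(a-(b+1))$ (see Fig.~\ref{fig11}). The first and easiest observation is that the purely virtual moves $\Omega_1^v,\Omega_2^v,\Omega_3^v$ and the endpoint move $\Omega_v$ create, destroy, and rearrange only virtual crossings, across which the arc labels are transported unchanged; hence these moves alter neither the set of classical crossings nor their weights, and $P_D(t)$ is visibly preserved. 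For the mixed move $\Omega_3^m$ a single classical crossing is slid across a virtual crossing: its sign is unaffected, and since labels pass unchanged through the virtual crossing, the pair $(a,b)$ at that classical crossing is the same before and after, so its weight, and therefore $P_D(t)$, is unchanged.

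It remains to treat the three classical moves, and here the essential supporting fact—needed beyond the local computation—is that the labels of all arcs lying outside the small disk of the move are preserved. This follows from a local check that the net change of label along each strand, from where it enters the disk to where it leaves it, is the same on both sides of the move; granting this, no global shift of the downstream labels occurs. With this in hand, the move $\Omega_1$ adds a single curl whose two incoming labels satisfy $a=b+1$, so that $W_D(c)=\operatorname{sgn}(c)\cdot 0=0$ and the contribution $\operatorname{sgn}(c)(t^{0}-1)=0$ vanishes; thus $P_D(t)$ is unchanged (this is precisely the role of the subtracted $1$ in $t^{W_D(c)}-1$). For the move $\Omega_2$ the two created crossings $c_1,c_2$ satisfy $\operatorname{sgn}(c_1)=-\operatorname{sgn}(c_2)$, and reading the labels around the bigon gives $W_D(c_1)=W_D(c_2)=W$; hence $\operatorname{sgn}(c_1)(t^{W}-1)+\operatorname{sgn}(c_2)(t^{W}-1)=0$, and here too $P_D(t)$ is unchanged.

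The main work, and the step I expect to be the principal obstacle, is invariance under $\Omega_3$. I would fix orientations of the three strands, assign labels to the three external incoming arcs, and propagate them through the tangle on both sides of the move using the rule of Fig.~\ref{fig11}. One then exhibits a sign-preserving bijection between the three crossings of the left-hand tangle and those of the right-hand tangle under which each matched pair carries the identical pair $(a,b)$ of incoming labels, hence equal weights and equal contributions to $P_D(t)$; checking that the three outgoing external arcs receive the same labels on both sides confirms the external-label consistency invoked above. Because the oriented versions of $\Omega_3$ (and of $\Omega_2$) are numerous, I would cut down the genuinely distinct cases using the mirror and flip symmetries of the local pictures together with the identity $W_-(\zeta)=-W_+(\zeta)$, verifying one representative per symmetry class. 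Assembling the cases shows that $P_D(t)$ is invariant under all of $\textit{g}\mathcal{R}$, which proves the theorem.
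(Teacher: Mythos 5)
This theorem is not proved in the paper at all: it is quoted from \cite{GK} (Theorem~5.1 there), so there is no internal proof to compare your attempt against; what you have written essentially reconstructs the standard argument, which is also the route taken in \cite{GK}. Your move-by-move plan is sound: the purely virtual moves and $\Omega_3^{m}$ change nothing because labels are transported unchanged through virtual crossings; the $\Omega_1$ curl has incoming labels with $a=b+1$, hence weight $0$ and zero contribution (exactly the purpose of the $-1$ in $t^{W_D(c)}-1$); the two $\Omega_2$ crossings have opposite signs and equal $W_D$ (note the mechanism: the raw values satisfy $W_+(c_1)=-W_+(c_2)$, and the sign reversal flips them back to equality in $W_D(c)=\operatorname{sgn}(c)W_+(c)$), so their contributions cancel; and $\Omega_3$ needs the external-label consistency check plus a sign-preserving, weight-preserving bijection of the three crossings. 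One detail in your $\Omega_3$ step is stated too strongly: the matched crossings do \emph{not} in general carry identical incoming label pairs $(a,b)$. Because the move reverses the order in which each strand meets its two crossings inside the disk, the incoming labels at a matched crossing are typically shifted by a common constant (for instance $(\alpha+1,\beta+1)$ on one side versus $(\alpha,\beta)$ on the other); what is preserved is their difference, hence the weight $W_+=a-(b+1)$, which is all the argument requires. This is not a gap in the strategy, but it is what the computation will actually show when you carry it out.
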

 
Let $D$ be a knotoid diagram with integer labeling of arcs. For $n \in \mathbb Z$, the \textit{$n$-th writhe} of $D$ is defined as 
\begin{equation}
w_n(D) = \sum_{\substack W_{D}(c)=n} \mathrm{sgn}(c),  
\end{equation}
where the sum is taken over all classical crossings $c$ of $D$ such that $W_D(c) = n$.
Remark  that $n$-writhe of $D$ is a coefficient of $t^n$, where $n \in \mathbb Z \setminus \{0\}$: 
\begin{equation} 
P_{D}(t) = \sum_{n \in \mathbb{Z} } w_{n} (D) \left(t^n - 1 \right) = \sum_{n \in \mathbb{Z} \backslash \left\{ 0 \right\} } w_{n}(D) t^n + \left( w_{0}(D) - w(D) \right). \label{eqn:PD}
\end{equation}

In virtue of  Theorem~\ref{th1}, if $D$ is a diagram of a knotoid $K$,  the following notations are correct: $P_{K}(t)=P_ {D}(t)$. Since $w_n (K)=w_n(D)$, where $n \in \mathbb{Z} \setminus \left\{ 0 \right\}$, and  $w_{0}'(K)= w_{0}(D) - w(D)$ are coefficients of the affine index polynomial, they also are invariants of knotoid $K$.  Denoting 
$$ 
P_ {K}^{+}(t) = \sum_{n \in \mathbb{Z}_{>0} } w_{n}(K) t^n \qquad \mbox{and} \qquad P_{K}^{-} (t) = \sum_ {n \in \mathbb{Z}_{<0}} w_n (K) t^n, 
$$
we can write  $P_{K}(t)=P_{K}^{+}(t) + P_{K}^{-}(t) + w_{0}'(K).$ 

 We give the following definitions which are analogous to definitions for virtual knots given in ~\cite{Kim}.  

\begin{definition} {\rm 
Let $\overline{D}$ be a flat virtual knotoid diagram with an integer labeling of arcs. For $n\in \mathbb{Z}_{> 0}$ define  \textit{$n$-th flat writhe} $f_n (\overline{D})$ of $\overline{D}$ by  
\begin{equation} 
f_n (\overline{D}) = \sum_{|W_{+} (\overline{c})| =n} \mathrm{sign} \left( W_+(\overline{c}) \right),
\end{equation}
where the sum is taken over all flat crossings $\overline{c}$ of $\overline{D}$ such that $|W_+(\overline{c})|=n$, and $\mathrm{sign} (x)$ denotes the sign of $x \in \mathbb Z$.  
The \textit{flat affine index polynomial} $Q_{\overline{D}} ( t )\in \mathbb{Z} [ t , t^{-1}]$, is defined by 
\begin{equation}
Q _ {\overline{D}} ( t ) = \sum_{ n \in \mathbb{Z} _ {>0}} f_{n} (\overline{D}) t^n.
\end{equation}
}
\end{definition}

The following proposition gives a method to show that a value from a flat virtual knotoid diagram is a flat virtual knotoid invariant. It is analogous to a method for flat virtual knot diagrams formulated in~\cite[Prop.2.9]{Kim}. 

\begin{proposition}\label{prop1}
Let $F$ be a flat virtual knotoid, and $\overline{D}$ a diagram of $F$. Denote a value calculated from $\overline{D}$ by $I(\overline{D})$. Assume that $I(\overline{D})$ is an invariant of an overlying virtual knotoid $K$ of $\overline{D}$, and this invariant remains unchanged when the overlying virtual knotoid $K$ is replaced with another overlying virtual knotoid $K'$ of $\overline{D}$. Then $I(\overline{D})$ is an invariant of flat virtual knotoid $F$. 
\end{proposition}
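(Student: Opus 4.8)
The plan is to verify that $I$ is unchanged under each generalized flat Reidemeister move, since any two diagrams of the flat virtual knotoid $F$ differ by a finite sequence of moves from $\textit{gf}\mathcal{R}$ together with isotopies of $S^2$, under which $I$ is manifestly unchanged. So I would fix a single move $m$ carrying $\overline{D}$ to a diagram $\overline{D}'$ and aim to show $I(\overline{D})=I(\overline{D}')$. Throughout, let $J$ denote the virtual knotoid invariant with which the hypothesis identifies $I$: for any overlying virtual knotoid diagram $K$ of $\overline{D}$ one has $I(\overline{D})=J(K)$, and by the second hypothesis this value is independent of the choice of $K$.

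The core of the argument will be a lifting step, converse to the observation recorded before the proposition that a move from $\textit{g}\mathcal{R}$ on $K$ descends to a move from $\textit{gf}\mathcal{R}$ on $\overline{K}$. Given $m\colon \overline{D}\to\overline{D}'$, I would construct an overlying $K$ of $\overline{D}$ and an overlying $K'$ of $\overline{D}'$ that are related by a single move $\tilde m\in\textit{g}\mathcal{R}$ which forgets to $m$. Outside the small disk supporting $m$ the crossings of $\overline{D}$ are untouched, so I would assign them arbitrary over/under information and carry the identical assignment to $K'$. Inside the disk I would proceed move by move: $f\Omega_1$ lifts to $\Omega_1$ with either sign; $f\Omega_2$ lifts to $\Omega_2$ by letting one strand pass entirely over the other across the bigon; and the purely virtual moves $f\Omega_1^v,f\Omega_2^v,f\Omega_3^v,f\Omega_v$ together with the mixed move $f\Omega_3^m$ involve at most one classical crossing, whose type may be chosen freely, and lift to $\Omega_1^v,\Omega_2^v,\Omega_3^v,\Omega_v,\Omega_3^m$ respectively.

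Granting the lifting, the conclusion is formal. Taking the overlying $K$ just built, the first hypothesis gives $I(\overline{D})=J(K)$; since $K$ and $K'$ are related by $\tilde m\in\textit{g}\mathcal{R}$, invariance of $J$ under generalized Reidemeister moves yields $J(K)=J(K')$; and applying the first hypothesis to the overlying $K'$ of $\overline{D}'$ gives $J(K')=I(\overline{D}')$. Here the second hypothesis is exactly what legitimizes these identifications for the particular overlyings produced by the lift, rather than for some preferred one. Chaining the three equalities yields $I(\overline{D})=I(\overline{D}')$.

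The hard part will be the lifting of $f\Omega_3$. There the disk carries three flat crossings on each side, and I must exhibit a single choice of over/under data making both configurations overlie a genuine classical third move in $\textit{g}\mathcal{R}$. The natural device is to impose a height order on the three participating strands, fixing all three crossing types at once and realizing $m$ as a classical $\Omega_3$; the remaining care lies in checking that this works uniformly for every admissible orientation of the strands and, importantly, that no choice inadvertently produces a forbidden move, so that $\tilde m$ genuinely lies in $\textit{g}\mathcal{R}$.
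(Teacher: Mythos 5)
Your proposal is correct and takes essentially the same route as the paper: both arguments reduce flat invariance to the hypothesis by exhibiting, for two flat-equivalent diagrams, overlying virtual knotoid diagrams that represent the same virtual knotoid. The paper's proof (modeled on Kim's Proposition 2.9) simply asserts the existence of a virtual knotoid overlying both $\overline{D}$ and $\overline{D_1}$, while you spell out the move-by-move lifting (including the height-order device for $f\Omega_3$) that justifies exactly that assertion.
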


\begin{proof}
The proof proceeds similarly to the Proposition 2.9 in \cite{Kim}. Let us denote the discussed invariant of $K$ by $\Phi(K)$, i.e. $I(\overline{D}) = \Phi(K)$. By the assumption, if $K'$ is another overlying virtual knotoid of $\overline{D}$, then $\Phi(K') = \Phi(K) = I(\overline{D})$. 

Let $\overline{D_1}$ be a flat virtual knot diagram equivalent to $\overline{D}$. Let $K$ be a virtual knotoid which is an overlying virtual knotoid of $\overline{D}$ and an overlying virtual knotoid of $\overline{D_1}$ both. Then $I(\overline{D_1}) = \Phi(K) = I(\overline{D})$. 
\end{proof}

By applying Proposition~\ref{prop1}, we present the following theorem. 

\begin{theorem}\label{th2}
Let $\overline{K}$ be a flat virtual knotoid, and $\overline{D}$ diagram of $\overline{K}$. Then 
\begin{itemize}
\item[(1)] For any $n \in \mathbb{Z} _ { > 0 }$, the $n$-th flat writhe $f_n (\overline{D})$ is an invariant of $\overline{K}$. 
\item[(2)] The affine index polynomial  $Q_{\overline{D}}(t)$ is an invariant of $\overline{K}$.
\end{itemize}
\end{theorem}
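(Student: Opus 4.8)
The plan is to invoke Proposition~\ref{prop1}, which reduces proving flat virtual knotoid invariance to two tasks: first showing the quantity in question (the $n$-th flat writhe $f_n(\overline{D})$, and then the polynomial $Q_{\overline{D}}(t)$ built from it) is an invariant of any overlying virtual knotoid $K$, and second showing it is independent of the choice of overlying diagram. For part (1), I would fix an overlying virtual knotoid diagram $K$ of $\overline{D}$ and compare the flat weights $W_+(\overline{c})$ with the ordinary weights $W_D(c)$. Since the arc labelings around a classical crossing $c\in D$ agree with those around the underlying flat crossing $\overline{c}\in\overline{D}$ (as stipulated in the construction preceding Fig.~\ref{fig11}), we have $W_D(c)=\operatorname{sgn}(c)\,W_+(\overline{c})$, so $|W_+(\overline{c})|=n$ exactly when the classical crossing contributes to writhes at level $\pm n$, and $\operatorname{sign}(W_+(\overline{c}))$ records the sign. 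The first key step is therefore to express $f_n(\overline{D})$ in terms of the $n$-th and $(-n)$-th writhes of $K$: concretely I expect $f_n(\overline{D}) = w_n(K) + w_{-n}(K)$ for $n\in\mathbb{Z}_{>0}$, after sorting crossings by the sign of $W_+$ and the sign of the crossing. Because each $w_n(K)$ is already known to be a knotoid invariant (from the coefficients of the affine index polynomial $P_K(t)$ discussed after equation~\eqref{eqn:PD}), this linear combination is automatically an invariant of $K$.

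The second key step is the independence from the overlying choice. Here I would observe that switching a crossing from over to under (replacing $K$ by a different overlier $K'$ of the same $\overline{D}$) changes $\operatorname{sgn}(c)$ to $-\operatorname{sgn}(c)$ but leaves the underlying flat diagram, hence all arc labelings and all flat weights $W_+(\overline{c})$, completely unchanged. Consequently $f_n(\overline{D})$, which is defined purely through the $W_+(\overline{c})$ via the flat diagram, literally does not see the crossing types at all; its defining sum ranges over flat crossings and uses only $\operatorname{sign}(W_+(\overline{c}))$. This makes the invariance under change of overlier essentially immediate, and with both hypotheses of Proposition~\ref{prop1} verified, part (1) follows. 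For part (2), the polynomial $Q_{\overline{D}}(t)=\sum_{n>0} f_n(\overline{D})\,t^n$ is a finite $\mathbb{Z}[t,t^{-1}]$-combination of the $f_n(\overline{D})$, so invariance of each coefficient immediately yields invariance of the polynomial; alternatively one applies Proposition~\ref{prop1} once more directly to $Q_{\overline{D}}(t)$ using the same two observations.

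The main subtlety I anticipate is the bookkeeping in the first step, namely confirming that the flat weight $W_+(\overline{c})$ of a flat crossing coincides with $\operatorname{sgn}(c)^{-1}W_D(c)$ for every overlier $c$, and tracking how a crossing with $W_+(\overline{c})>0$ versus $W_+(\overline{c})<0$ distributes between the positive-level writhe $w_n$ and the negative-level writhe $w_{-n}$ of $K$ once the crossing sign is taken into account. This requires a careful case analysis over the two possible signs of $\operatorname{sgn}(c)$ and the two possible signs of $W_+(\overline{c})$, using the identities $W_D(c)=\operatorname{sgn}(c)W_+(\overline{c})$ and $W_-(\overline{c})=-W_+(\overline{c})$ from the weight definitions. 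Once that correspondence is pinned down the rest is formal, since it reduces everything to the already-established invariance of the coefficients $w_n(K)$ of the affine index polynomial. I do not expect the forbidden moves to play any role here, as the argument is entirely local to crossings and relies only on the generalized flat Reidemeister moves already encoded in the invariance of $P_K(t)$.
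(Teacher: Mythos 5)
Your overall strategy is the same as the paper's: verify the two hypotheses of Proposition~\ref{prop1} by expressing $f_n(\overline{D})$ through the writhes $w_{\pm n}$ of an overlying virtual knotoid $K$, invoke the already-established invariance of those coefficients of the affine index polynomial, and deduce part (2) coefficient-wise. However, your key identity is stated with the wrong sign, and with that sign it is false. The correct relation, which is exactly what the paper proves (its equation~\eqref{eqn:KPV}), is
\[
f_n(\overline{D}) \;=\; w_n(K) - w_{-n}(K), \qquad n \in \mathbb{Z}_{>0},
\]
not $w_n(K)+w_{-n}(K)$. The case analysis you postponed settles this at once: a crossing $c$ with $W_+(\overline{c})=n>0$ contributes $+1$ to $f_n(\overline{D})$; if $\operatorname{sgn}(c)=+1$ then $W_D(c)=n$ and $c$ contributes $+1$ to $w_n(K)$, while if $\operatorname{sgn}(c)=-1$ then $W_D(c)=-n$ and $c$ contributes $-1$ to $w_{-n}(K)$. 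In both cases its contribution to $w_n(K)-w_{-n}(K)$ is $+1$, whereas its contribution to $w_n(K)+w_{-n}(K)$ is $+1$ or $-1$ according to the crossing type; the analysis for $W_+(\overline{c})=-n$ is symmetric. So only the difference reproduces $f_n(\overline{D})$.

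The error is not cosmetic, because the plus-sign combination contradicts your own (correct) second step: you rightly observe that $f_n(\overline{D})$ is blind to the over/under data, but $w_n(K)+w_{-n}(K)$ is not --- a single crossing change moves a contribution $+1$ in $w_n$ to a contribution $-1$ in $w_{-n}$, which preserves $w_n - w_{-n}$ but flips that term's sign in $w_n + w_{-n}$, so the sum cannot satisfy the second hypothesis of Proposition~\ref{prop1}. A concrete counterexample sits in the paper: for the flat diagram $\overline{D_{c_1}}$ of Example~\ref{exp4.1}, with $W_+(p_1)=W_+(p_3)=-1$ and $W_+(p_2)=2$, take the overlier in which all three crossings are positive; then $f_1(\overline{D_{c_1}})=-2$ while $w_1(K)+w_{-1}(K)=0+2=2$. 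Once the identity is corrected to $f_n = w_n - w_{-n}$, the rest of your argument --- overlier-independence, Proposition~\ref{prop1}, and the coefficient-wise deduction of part (2) --- goes through exactly as in the paper.
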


\begin{proof}
(1) Let $K$ be any overlying virtual knotoid of $\overline{D}$ with diagram $D$. 
Let us demonstrate that for $n \in \mathbb{Z}_{>0}$ the following relation holds 
\begin{equation}
f_n (\overline{D}) = w_n (K) - w_{- n} (K).  \label{eqn:KPV}
\end{equation} 
Indeed, using $W_D (c) = \textrm{sgn} (c) W_+(c) = \textrm{sgn} (c) W_+ (\overline{c})$ and $\mathrm{sgn (c) } = \pm 1$ we get 
\begin{eqnarray*}
f_n (\overline{ D } )
&=& \sum_{|W_+(\overline{c}) = n|} \mathrm{sign} \left( W_+(\overline{c}) \right)= \sum_{W_{+}(\overline{c})=n} \mathrm{sign} \left(W_+(\overline{c}) \right) + \sum _ {W_{+}(\overline{c})=-n} \mathrm{sign} \left(W_+(\overline{c})\right)  \\ 
 & = & \sum_{W_+(\overline{c})=n} 1 + \sum_{W_+(\overline{c})=-n} (-1)
= \sum_{\mathrm{sgn} (c) W_D(c)=n} 1 + \sum_{\mathrm{sgn}(c) W_D(c) = -n} (-1) \\ 
&=&  
\sum_{W_D(c) = n} \mathrm{sgn} (c) - \sum_{W_D (c) = -n} \mathrm{sgn} (c)  = w_n (K) - w_{-n} (K), 
\end{eqnarray*}
where $\overline{c}$ is a flat crossing of $\overline{D}$, $c$ is a corresponding classical crossing of~$D$, and $\mathrm{sign} (x)$ denotes the sign of $x \in \mathbb Z$. By Proposition~\ref{prop1}, $f_{n}(\overline{D})$ is an invariant of $\overline{K}$.

(2) Let us demonstrate that the following relation holds: 
$$
Q_{\overline{D}}(t) = P_{K}^{+}(t) - P_{K}^{ - } (t ^ {-1} ). 
$$
Indeed, by item (1), we have
\begin{eqnarray*}
Q_{\overline{D}} (t)
&=& \sum _ {n \in \mathbb{Z}_{>0}} f_n (\overline{D}) t^n  
 =  \sum_{n\in \mathbb{Z}_{>0} } \left(w_{n} (K) - w_{-n} (K) \right) t^n   
= \sum_{n \in \mathbb{Z}_{>0 }} w_n (K) t^n - \sum_{n\in \mathbb{Z}_{>0}} w_{-n} (K) t^n \\ 
&=& \sum_{n \in \mathbb{Z}_{>0}} w_n (K) t^n - \sum_{m \in \mathbb{Z}_{<0}} w_{m} (K) (t^{-1})^m    
= P_{K}^{+} (t) - P_{K}^{-} ( t ^{-1}) . 
\end{eqnarray*}
Analogously, by Proposition~\ref{prop1}, $Q_{\overline{D}} (t)$ is an invariant of $\overline{K}$.
\end{proof}

\begin{remark}{\rm 
The formula~(\ref{eqn:KPV})  was used in~\cite{KPV} for defining F-polynomials for oriented virtual knot. In~\cite{KPV} the $n$-th flat writhe $f_n (\overline{D})$ was named $n$-th dwrithe and denotes by $\nabla J_n(D) = J_n(D) - J_{-n}(D)$, where $J_n(D)$ was a notation for $n$-th writhe. 
}
\end{remark}

\begin{remark} {\rm 
For the trivial flat virtual knotoid $\overline{K_0}$, we have $Q _ {\overline{K_0}} ( t )=0$.
} 
\end{remark}

\subsection{Singular virtual knotoids}

The definition of singular virtual knotoid can be done analogously to the definition of singular virtual knot from~\cite{Hen}. A singular crossing is indicated with a black dot as in Fig.~\ref{fig4}~(d).  

\begin{definition} {\rm 
A \textit{singular virtual knotoid diagram} is a virtual knotoid diagram with finite number of \textit{singular crossings}.
}
\end{definition}

Thus, a singular virtual knotoid diagram can has classical, virtual, or singular crossings. 
We define \textit{generalized singular Reidemeister moves} as a collection $\textit{gs}\mathcal{R} = \{\textit{g}\mathcal{R}, S_2, S_3, S^m_3\}$ of  generalized Reidemeister moves together with moves $S_2$, $S_3$, $S_3^{m}$ involving singular crossings given in Fig.~\ref{fig10}. 
\begin{figure}[htbp]
\begin{center}
\tikzset{every picture/.style={line width=1.0pt}}  
\begin{tikzpicture}[x=0.75pt,y=0.75pt,yscale=-1,xscale=1]	
\draw    (34.95,193.66) -- (19.75,178.24) -- (53.78,143.8) ;
\draw    (23.43,213.4) -- (57.64,178.69) -- (23.89,143.35) ;
\draw  [fill={rgb, 255:red, 28; green, 26; blue, 26 }  ,fill opacity=1 ] (33.34,159.02) .. controls (33.34,156.03) and (35.77,153.6) .. (38.76,153.6) .. controls (41.76,153.6) and (44.18,156.03) .. (44.18,159.02) .. controls (44.18,162.01) and (41.76,164.44) .. (38.76,164.44) .. controls (35.77,164.44) and (33.34,162.01) .. (33.34,159.02) -- cycle ;
\draw    (44.35,202.44) -- (55.65,213.97) ;
\draw    (135.32,163.49) -- (150.44,178.99) -- (116.21,213.24) ;
\draw    (146.95,143.81) -- (112.55,178.34) -- (146.1,213.86) ;
\draw  [fill={rgb, 255:red, 28; green, 26; blue, 26 }  ,fill opacity=1 ] (136.73,198.14) .. controls (136.72,201.13) and (134.28,203.54) .. (131.28,203.53) .. controls (128.29,203.51) and (125.88,201.07) .. (125.9,198.08) .. controls (125.91,195.08) and (128.35,192.67) .. (131.34,192.69) .. controls (134.34,192.71) and (136.75,195.14) .. (136.73,198.14) -- cycle ;
\draw    (125.97,154.66) -- (114.73,143.06) ;
\draw    (491.43,141.08) -- (435.07,212.41) ;
\draw    (434.91,141.42) -- (491.59,212.08) ;
\draw    (429.64,152.16) -- (496.09,152.16) ;
\draw    (604.11,140.41) -- (547.76,211.74) ;
\draw    (547.21,141.28) -- (604.27,212.38) ;
\draw    (541.63,199.36) -- (609.08,199.36) ;
\draw  [fill={rgb, 255:red, 23; green, 22; blue, 22 }  ,fill opacity=1 ] (457.83,176.75) .. controls (457.83,173.76) and (460.26,171.33) .. (463.25,171.33) .. controls (466.24,171.33) and (468.67,173.76) .. (468.67,176.75) .. controls (468.67,179.74) and (466.24,182.17) .. (463.25,182.17) .. controls (460.26,182.17) and (457.83,179.74) .. (457.83,176.75) -- cycle ;
\draw   (477.42,152.16) .. controls (477.42,149.17) and (479.84,146.74) .. (482.83,146.74) .. controls (485.83,146.74) and (488.25,149.17) .. (488.25,152.16) .. controls (488.25,155.15) and (485.83,157.58) .. (482.83,157.58) .. controls (479.84,157.58) and (477.42,155.15) .. (477.42,152.16) -- cycle ;
\draw   (437.75,152.18) .. controls (437.75,149.18) and (440.17,146.76) .. (443.17,146.76) .. controls (446.16,146.76) and (448.59,149.18) .. (448.59,152.18) .. controls (448.59,155.17) and (446.16,157.59) .. (443.17,157.59) .. controls (440.17,157.59) and (437.75,155.17) .. (437.75,152.18) -- cycle ;
\draw   (588.52,199.28) .. controls (588.52,196.29) and (590.95,193.86) .. (593.94,193.86) .. controls (596.93,193.86) and (599.36,196.29) .. (599.36,199.28) .. controls (599.36,202.27) and (596.93,204.7) .. (593.94,204.7) .. controls (590.95,204.7) and (588.52,202.27) .. (588.52,199.28) -- cycle ;
\draw   (552.11,199.28) .. controls (552.11,196.29) and (554.54,193.86) .. (557.53,193.86) .. controls (560.52,193.86) and (562.95,196.29) .. (562.95,199.28) .. controls (562.95,202.27) and (560.52,204.7) .. (557.53,204.7) .. controls (554.54,204.7) and (552.11,202.27) .. (552.11,199.28) -- cycle ;
\draw  [fill={rgb, 255:red, 29; green, 27; blue, 27 }  ,fill opacity=1 ] (570.32,176.83) .. controls (570.32,173.84) and (572.75,171.41) .. (575.74,171.41) .. controls (578.73,171.41) and (581.16,173.84) .. (581.16,176.83) .. controls (581.16,179.82) and (578.73,182.25) .. (575.74,182.25) .. controls (572.75,182.25) and (570.32,179.82) .. (570.32,176.83) -- cycle ;
\draw    (222.05,156.53) -- (266.63,211.87) ;
\draw    (208.64,140.47) -- (215.92,149.03) ;
\draw    (260.12,149.43) -- (267.35,140.21) ;
\draw    (254.62,156.53) -- (210.32,212.8) ;
\draw    (365.39,196.39) -- (320.84,141.56) ;
\draw    (378.9,212.91) -- (371.62,204.37) ;
\draw    (327.41,204) -- (320.19,213.22) ;
\draw    (333.1,196.39) -- (377.15,140.59) ;
\draw    (272.11,152.94) -- (204.18,153.13) ;
\draw    (382.49,199.79) -- (314.56,199.98) ;
\draw  [fill={rgb, 255:red, 30; green, 26; blue, 26 }  ,fill opacity=1 ] (233.08,176.51) .. controls (233.08,173.52) and (235.51,171.09) .. (238.5,171.09) .. controls (241.49,171.09) and (243.92,173.52) .. (243.92,176.51) .. controls (243.92,179.5) and (241.49,181.93) .. (238.5,181.93) .. controls (235.51,181.93) and (233.08,179.5) .. (233.08,176.51) -- cycle ;
\draw  [fill={rgb, 255:red, 39; green, 37; blue, 37 }  ,fill opacity=1 ] (343.62,175.51) .. controls (343.62,172.52) and (346.04,170.09) .. (349.03,170.09) .. controls (352.03,170.09) and (354.45,172.52) .. (354.45,175.51) .. controls (354.45,178.5) and (352.03,180.93) .. (349.03,180.93) .. controls (346.04,180.93) and (343.62,178.5) .. (343.62,175.51) -- cycle ;
\draw   (93.13,177.86) -- (98.84,180.39) -- (93.13,182.92) ;
\draw    (98.12,180.55) -- (71.02,180.55) ;
\draw   (76.13,183.1) -- (70.31,180.69) -- (75.9,178.04) ;
\draw   (300.75,177.53) -- (306.46,180.06) -- (300.75,182.59) ;
\draw    (305.74,180.22) -- (278.64,180.22) ;
\draw   (283.75,182.76) -- (277.93,180.35) -- (283.51,177.71) ;
\draw   (524.64,176.86) -- (530.35,179.39) -- (524.64,181.92) ;
\draw    (529.64,179.55) -- (502.54,179.55) ;
\draw   (507.64,182.1) -- (501.82,179.69) -- (507.41,177.04) ;
\draw (283.29,160.39) node [anchor=north west][inner sep=0.75pt]  [font=\small]  {$S_{3}$};
\draw (76.16,160.06) node [anchor=north west][inner sep=0.75pt]  [font=\small]  {$S_{2}$};
\draw (508.44,158.33) node [anchor=north west][inner sep=0.75pt]  [font=\small]  {$S_{3}^{m}$};
\end{tikzpicture}
\caption{Moves $S_2$, $S_3$ and $S_3^{m}$ involving singular crossings. \label{fig10}}
\end{center}
\end{figure}
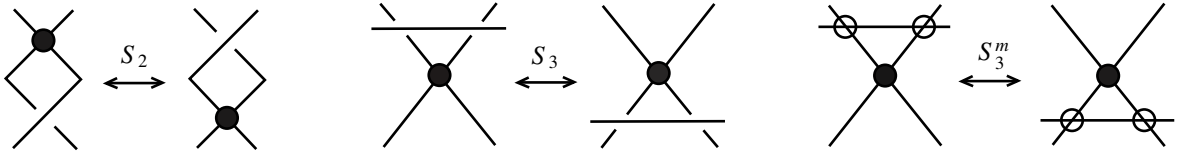

\begin{definition} {\rm
Two singular virtual knotoid diagrams are said to be \textit{equivalent} if they are related by a finite sequence of moves, where each move is either from $\textit{gs}\mathcal{R}$ or isotopy of $S^2$. 
A \textit{singular virtual knotoid} is an equivalence class of singular virtual knotoid diagrams.
}
\end{definition}

\begin{definition} {\rm 
A \textit{flat singular virtual knotoid diagram} is a singular virtual knotoid diagram with no distinguishing  between over / under type in classical crossings, with both types be represented by a flat crossing.
}
\end{definition}

Thus, a flat singular virtual knotoid diagram can has flat, virtual, or singular crossings. We define \textit{ generalized flat singular Reidemeister moves} as a collection $\textit{gfs}\mathcal{R} = \{\textit{gf}\mathcal{R}, fS_2, fS_3, fS^m_3\}$ of  generalized flat Reidemeister moves, given in Fig.~\ref{fig7}, together with flat versions $fS_2$, $fS_3$, and $fS_3^{m}$ of singular moves $S_2$, $S_3$ and $S_3^{m}$, respectively, shown in Fig.~\ref{fig10}.

\begin{definition} {\rm
Two flat singular virtual knotoid diagrams are said to be \textit{equivalent} if they are related by a finite sequence of moves, where each move is either from $\textit{gfs}\mathcal{R}$ or isotopy of $S^2$. 
A \textit{flat singular virtual knotoid} is an equivalence class of flat singular virtual knotoid diagrams.
}
\end{definition}

In the following, we recall the definition of multi-knotoids due to Turaev~\cite{Tur1}.

\begin{definition}
{\rm 
A \textit{multi-knotoid diagram} on $S^2$ is an immersion of a single oriented segment and several oriented circles in $S^2$, endowed with over/under-crossing data. A \textit{multi-knotoid} is an equivalence class of such diagrams, considered up to the same equivalence relation as for knotoids.
}
\end{definition}

\begin{definition} \label{def2.18}
{\rm 
A \textit{virtual multi-knotoid diagram} is a multi-knotoid diagram with virtual crossings. 
A \textit{virtual multi-knotoid} is an equivalence class of such diagrams, considered up to the same equivalence as for virtual knotoids. 
}
\end{definition}

We consider a flat virtual multi-knotoid diagram as a virtual multi-knotoid diagram where the under/over-crossing information of each classical crossing is forgotten. Intuitively, flat virtual multi-knotoid diagram can be thought of as a shadow (or, a flattening) of a virtual multi-knotoid diagram.

\section{Invariants of virtual knotoids} \label{section3}

In this section, we define smoothing invariant $\mathcal{F}$ for virtual knotoids, which is constructed by 0-smoothing at classical crossings.  In \cite{Pet}, Petit introduced another smoothing invariant $\mathcal{L}$, which is constructed by 1-smoothing at classical crossings. In addition, Petit defined the gluing invariant $\mathcal{G}$, which is a universal Vassiliev invariant. 

\subsection{$n$-th derivative and Vassiliev invariant} Any virtual knotoid invariant $V$ with values in an abelian group $A$ can be extended to an invariant of singular virtual knotoids. Let $V$ be a virtual knotoid invariant with values in an abelian group $A$. 
Let $K$ be a singular virtual knotoid diagram with $n \geq 1$ singular crossings. Choosing a singular crossing in $K$, we obtain $K_+$ by resolving the singular crossing into a positive crossing, and $K_-$ by resolving the singular crossing into a negative crossing, see Fig.~\ref{fig104}. Diagrams of $K_+$ and $K_-$ have $(n-1)$ singular crossings.

\begin{figure}[!ht]
\begin{center}
\tikzset{every picture/.style={line width=1.0pt}}  
\begin{tikzpicture}[x=0.75pt,y=0.75pt,yscale=-1,xscale=1]	
\draw    (490.26,442.05) -- (431.17,502.42) ;
\draw   (431.41,447.9) -- (430.66,441.97) -- (436.43,443.17) ;
\draw   (484.79,443.32) -- (490.51,441.91) -- (489.98,447.87) ;
\draw    (465.09,476.88) -- (492.09,503.71) ;
\draw    (189.81,441.61) -- (165.76,466.15) ;
\draw    (130.6,441.67) -- (189.93,501.92) ;
\draw   (131.25,447.46) -- (130.51,441.53) -- (136.27,442.73) ;
\draw   (184.34,442.88) -- (190.06,441.47) -- (189.53,447.43) ;
\draw    (155.09,477.15) -- (130.72,501.98) ;
\draw    (430.17,441.66) -- (455.76,467.54) ;
\draw    (342.82,441.65) -- (283.74,502.03) ;
\draw    (283.62,441.72) -- (342.94,501.96) ;
\draw   (284.27,447.51) -- (283.52,441.58) -- (289.29,442.77) ;
\draw   (337.35,442.93) -- (343.07,441.52) -- (342.54,447.47) ;
\draw  [fill={rgb, 255:red, 14; green, 13; blue, 13 }  ,fill opacity=1 ] (307.86,471.84) .. controls (307.86,468.85) and (310.29,466.42) .. (313.28,466.42) .. controls (316.27,466.42) and (318.7,468.85) .. (318.7,471.84) .. controls (318.7,474.83) and (316.27,477.26) .. (313.28,477.26) .. controls (310.29,477.26) and (307.86,474.83) .. (307.86,471.84) -- cycle ;
\draw    (371.11,469.66) -- (403.03,469.17) ;
\draw   (400.74,466.29) -- (403.76,469.11) -- (400.81,472) ;
\draw    (252.76,470.66) -- (220.84,470.17) ;
\draw   (223.14,467.29) -- (220.11,470.11) -- (223.06,473) ;
\draw (323.6,465.93) node [anchor=north west][inner sep=0.75pt]  [font=\large]  {$c$};
\draw (147,504.66) node [anchor=north west][inner sep=0.75pt]  [font=\normalsize]   {$K_{-}$};
\draw (447,505.66) node [anchor=north west][inner sep=0.75pt]  [font=\normalsize]   {$K_{+}$};
\draw (304,507.62) node [anchor=north west][inner sep=0.75pt]  [font=\normalsize]   {$K$};
\end{tikzpicture}
\caption{Two resolutions of a singular crossing $c$ in $K$.} \label{fig104}
\end{center}
\end{figure}
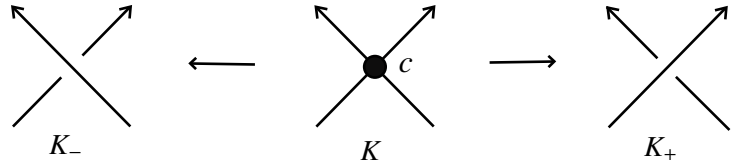

\begin{definition} {\rm 
 Define the \textit{$n$-th derivative} $V^{(n)}$, $n \geq 1$, of $V$ by the following recursive relation: 
\begin{equation}
V^{(n)}(K)=V^{(n-1)}(K_{+})-V^{(n-1)}(K_{-}) \label{(4.1)}
\end{equation}
with $V^{(0)} = V$. 
}
\end{definition}

Let us view the derivative of an invariant as a recursive definition for the invariant $V$ of a singular virtual knotoid $K$ with $n$ singular crossings. We see that the derivative gives us a value in $A$ for $V(K)$ that does not depend on the order in which we resolve the $n$ singular crossings.

\begin{definition} {\rm 
A virtual knotoid invariant $V$, is said to be a \textit{Vassiliev invariant of order $\leq n$} if its extension on singular virtual knotoids vanishes on all singular virtual knotoids with more than $n$ singular crossings. Furthermore, $V$ is said to be of order $n$ if it is of order $\leq n$ and not of order $\leq n-1$.
}
\end{definition}

We may now define the universal invariant of order one for virtual knotoids analogously the definition of universality for Vassiliev invariants of order one for virtual knots from~\cite{Hen}.

\begin{definition} [\cite{Hen}]  {\rm 
A Vassiliev invariant $\bold{U}_1$, of order one for virtual knotoids is said to be the \textit{universal invariant of order one} if every other Vassiliev invariant $V$ of order one for virtual knotoids can be recovered from $\bold{U}_1$ using only the first derivative of $V$ and the values of $V$ on one representative for each homotopy class of virtual knotoids.} 
\end{definition}

\subsection{The $0$-smoothing invariant} 

Let $D$ be a virtual knotoid diagram and $c$ a classical crossing of $D$. There are two kinds of resolution of the crossing $c$, called \textit{0-smoothing} and \textit{1-smoothing}, see Fig.~\ref{fig14}. Recall that the 0-smoothing and 1-smoothing were used in~\cite{KPV} and called \textit{smoothing against orientation} and \textit{smoothing along orientation}, respectively. This 0-smoothing  was used to construct a family of $F$-polynomials for virtual knots. Calculations of $F$-polynomials for tabulated virtual knots are presented in~\cite{IV}. 
\begin{figure}[!ht]
\begin{center}
\tikzset{every picture/.style={line width=1.0pt}} 
\begin{tikzpicture}[x=0.75pt,y=0.75pt,yscale=-1,xscale=1]
\draw    (230.55,406.08) -- (174.4,406.18) ;
\draw   (177.47,408.86) -- (174.42,406.36) -- (177.43,403.8) ;
\draw    (301.26,373.97) -- (242.17,434.35) ;
\draw   (242.41,379.83) -- (241.66,373.9) -- (247.43,375.09) ;
\draw   (295.79,375.25) -- (301.51,373.84) -- (300.98,379.79) ;
\draw    (276.09,408.8) -- (303.09,435.64) ;
\draw    (303.31,452.03) -- (279.26,476.57) ;
\draw    (244.1,452.1) -- (303.43,512.34) ;
\draw   (244.75,457.89) -- (244.01,451.96) -- (249.77,453.15) ;
\draw   (297.84,453.31) -- (303.56,451.9) -- (303.03,457.85) ;
\draw    (268.59,487.57) -- (244.22,512.41) ;
\draw    (241.17,373.58) -- (266.76,399.47) ;
\draw   (159.54,375.69) -- (165.27,374.32) -- (164.69,380.27) ;
\draw   (104.8,429.24) -- (99.01,430.36) -- (99.85,424.44) ;
\draw    (98.26,376.05) .. controls (122.21,400.48) and (138.88,404.48) .. (164.95,374.96) ;
\draw    (165.87,429.36) .. controls (142.88,404.82) and (123.55,404.15) .. (99.18,430.23) ;
\draw   (97.54,460.28) -- (96.44,454.49) -- (102.36,455.34) ;
\draw   (163.89,502.38) -- (164.89,508.19) -- (158.99,507.24) ;
\draw    (96.76,454.55) .. controls (120.71,478.98) and (137.38,482.98) .. (163.45,453.46) ;
\draw    (164.37,507.86) .. controls (141.38,483.32) and (122.05,482.65) .. (97.68,508.73) ;
   \draw   (382.13,380.9) -- (380.73,375.18) -- (386.69,375.72) ;
\draw    (382.89,442.17) .. controls (407.17,418.07) and (411.06,401.38) .. (381.37,375.5) ;
   \draw   (430.79,375.73) -- (436.59,374.69) -- (435.68,380.6) ;
\draw    (436.27,375.22) .. controls (411.88,398.37) and (411.34,417.7) .. (437.57,441.9) ;
   \draw   (384.8,456.9) -- (383.4,451.18) -- (389.35,451.72) ;
\draw    (385.55,518.17) .. controls (409.84,494.07) and (413.73,477.38) .. (384.04,451.5) ;
   \draw   (433.46,451.73) -- (439.26,450.69) -- (438.35,456.6) ;
\draw    (438.93,451.22) .. controls (414.54,474.37) and (414,493.7) .. (440.24,517.9) ;
\draw    (230.3,486.58) -- (174.15,486.68) ;
\draw    (322.39,407.87) -- (378.53,407.93) ;
\draw   (376.32,405.18) -- (379.12,407.88) -- (376.35,410.61) ;
\draw    (324.89,486.49) -- (381.03,486.17) ;
\draw   (378.74,483.29) -- (381.76,486.11) -- (378.81,489) ;
\draw   (176.62,489.24) -- (173.58,486.74) -- (176.58,484.19) ;
\draw (175.12,392.65) node [anchor=north west][inner sep=0.75pt]  [font=\tiny]  {$0-\text{smoothing}$};
\draw (320.83,392.88) node [anchor=north west][inner sep=0.75pt]  [font=\tiny]  {$1-\text{smoothing}$};
\draw (323.23,472.37) node [anchor=north west][inner sep=0.75pt]  [font=\tiny]  {$1-\text{smoothing}$};
\draw (175.12,472.94) node [anchor=north west][inner sep=0.75pt]  [font=\tiny]  {$0-\text{smoothing}$};
\draw (281.17,396.97) node [anchor=north west][inner sep=0.75pt]  [font=\large]  {$c$};
\draw (282.6,476.86) node [anchor=north west][inner sep=0.75pt]  [font=\large]  {$c$};		
\end{tikzpicture}
\caption{Two resolutions of classical crossing $c \in D$.} \label{fig14}
\end{center}
\end{figure}
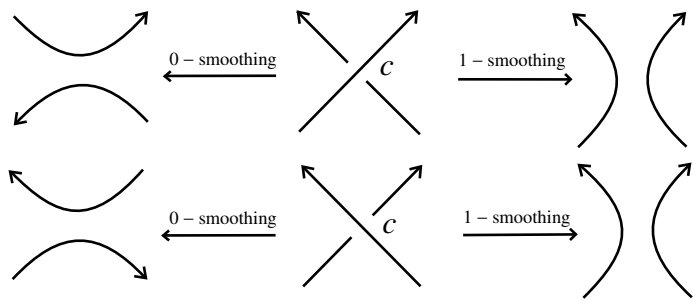

After applying 0-smoothing (1-smoothing) at $c \in D$, we will get a virtual knotoid (multi-knotoid) diagram. We use notation $D_c$ (${}_cD$) for the obtained virtual knotoid (multi-knotoid) diagram and $\overline{D_c}$ ($\overline{{}_cD}$) for the non-oriented underlying flat virtual knotoid (multi-knotoid) diagram. We will call a \textit{flattening} the transformation $D_c$ to $\overline{D_{c}}$ (${}_cD$ to $\overline{{}_cD}$). 

\begin{definition}\label{def4.2} {\rm 
Let $D$ be an oriented virtual knotoid diagram and $D_c$ a virtual knotoid diagram obtained by $0$-smoothing $D$ at a classical crossing $c \in D$. Denote by $[\overline{D}]$ and $[\overline{D_c}]$ the equivalence classes of the flat virtual diagrams obtained by flattening of $D$ and $D_c$, respectively. The \textit{0-smoothing polynomial} $\mathcal{F} (D)$ for $D$ is defined by the formula 
\begin{equation}
\mathcal{F} (D) = \sum_{c} w_D(c) \, [\overline{D_c}] - w (D) \, [\overline{D}],  \label{eqn:defF}
\end{equation}
where the sum runs over all the classical crossings of $D$ with $w_D(c) = \operatorname{sgn}(c)$ and $\displaystyle w(D) = \sum_c w_D(c)$. 
    }
\end{definition}

Remark that if $D$ doesn't have classical crossings then $\mathcal{F} (D) = 0$.

\begin{theorem}\label{th3.1}
Let $D$ be a virtual knotoid diagram. Then $\mathcal{F} (D)$ is a virtual knotoid invariant.
\end{theorem}

\begin{proof} 	
Consider the behavior of $\mathcal{F} (D)$ under moves $\Omega_1$, $\Omega_2$, $\Omega_3$ and $\Omega^m_3$.
	
(1) $\Omega_1$-move.   
Let $D$ and $E$ be virtual knotoid diagrams related by an $\Omega_1$-move, and $c_* \in D$ be the classical crossing  involved in $\Omega_1$. It is clear from Fig.~\ref{fig15} that $[\overline{D_{c_*}} ]= [\overline{E}] = [\overline{D}]$ and $w(E) = w(D) - w_D(c_*)$. Notice that if a classical crossing $c \in D$ is not involved in $\Omega_1$-move then $w_D(c)=w_E(c)$, $[\overline{D_c}] = [\overline{E_c}]$. 

\begin{figure}[!htb]
\begin{center}
\tikzset{every picture/.style={line width=1.0pt}}
\begin{tikzpicture}[x=0.75pt,y=0.75pt,yscale=-0.8,xscale=0.8]
\draw  [color={rgb, 255:red, 224; green, 20; blue, 20 }  ,draw opacity=1 ] (198.08,107.47) .. controls (198.08,81.96) and (218.85,61.28) .. (244.47,61.28) .. controls (270.09,61.28) and (290.86,81.96) .. (290.86,107.47) .. controls (290.86,132.98) and (270.09,153.67) .. (244.47,153.67) .. controls (218.85,153.67) and (198.08,132.98) .. (198.08,107.47) -- cycle ;
\draw    (234.97,115.78) .. controls (176.44,75.28) and (345.29,82.56) .. (209.8,136.77) ;
\draw    (249.01,125.4) .. controls (257.21,131.52) and (267.16,135.6) .. (279.45,136.77) ;
\draw  [color={rgb, 255:red, 224; green, 20; blue, 20 }  ,draw opacity=1 ] (41.44,107.85) .. controls (41.44,82.33) and (62.21,61.65) .. (87.83,61.65) .. controls (113.45,61.65) and (134.22,82.33) .. (134.22,107.85) .. controls (134.22,133.36) and (113.45,154.04) .. (87.83,154.04) .. controls (62.21,154.04) and (41.44,133.36) .. (41.44,107.85) -- cycle ;
\draw    (128.95,127.6) .. controls (121.05,124.69) and (110.37,125.13) .. (102.91,106.33) .. controls (96.47,88.26) and (74.46,93.5) .. (70.72,107.5) .. controls (67.91,120.52) and (56.16,125.48) .. (46.72,127.02) ;
\draw    (177.4,114.94) -- (151,115.09) ;
\draw   (157.43,112.06) -- (151.03,115.01) -- (157.38,118.07) ;
\draw  [color={rgb, 255:red, 224; green, 20; blue, 20 }  ,draw opacity=1 ] (360.41,107.14) .. controls (360.41,81.63) and (381.18,60.94) .. (406.8,60.94) .. controls (432.43,60.94) and (453.2,81.63) .. (453.2,107.14) .. controls (453.2,132.65) and (432.43,153.33) .. (406.8,153.33) .. controls (381.18,153.33) and (360.41,132.65) .. (360.41,107.14) -- cycle ;
\draw  [color={rgb, 255:red, 224; green, 20; blue, 20 }  ,draw opacity=1 ] (520.44,107.51) .. controls (520.44,82) and (541.21,61.32) .. (566.83,61.32) .. controls (592.45,61.32) and (613.22,82) .. (613.22,107.51) .. controls (613.22,133.03) and (592.45,153.71) .. (566.83,153.71) .. controls (541.21,153.71) and (520.44,133.03) .. (520.44,107.51) -- cycle ;
\draw    (607.95,127.27) .. controls (600.05,124.36) and (589.37,124.8) .. (581.91,106) .. controls (575.47,87.93) and (553.46,93.17) .. (549.72,107.16) .. controls (546.91,120.19) and (535.16,125.14) .. (525.72,126.69) ;
\draw    (473,113.93) -- (499.4,114.09) ;
\draw   (492.97,111.06) -- (499.37,114.02) -- (493.02,117.08) ;
\draw    (447.45,128.27) .. controls (440.72,125.79) and (431.97,125.74) .. (424.91,114.14) .. controls (423.68,112.13) and (422.51,109.77) .. (421.41,107) .. controls (414.97,88.93) and (392.96,94.17) .. (389.22,108.16) .. controls (386.41,121.19) and (374.66,126.14) .. (365.22,127.69) ;
\draw    (337.4,116.44) -- (311,116.59) ;
\draw   (317.43,113.56) -- (311.03,116.51) -- (317.38,119.57) ;
\draw   (332.13,113.56) -- (338.54,116.52) -- (332.19,119.58) ;
\draw   (262.58,112.01) -- (254.39,113.77) -- (257.38,105.98) ;
\draw   (410.82,100.07) -- (403.86,95.42) -- (411.58,92.16) ;
\draw (235.85,126.36) node [anchor=north west][inner sep=0.75pt]  [font=\normalsize]  {$c_{*}$};
\draw (235.75,164.07) node [anchor=north west][inner sep=0.75pt]    {$D$};
\draw (77.6,161.3) node [anchor=north west][inner sep=0.75pt]    {$\overline{D_{c_{*}}}$};
\draw (398.08,163.74) node [anchor=north west][inner sep=0.75pt]    {$E$};
\draw (562.2,160.57) node [anchor=north west][inner sep=0.75pt]    {$\overline{E}$};
\draw (312.4,89) node [anchor=north west][inner sep=0.75pt]    {$\Omega _{1}$};
\end{tikzpicture}
\caption{$0$-smoothing and flattening related to $\Omega_1$.} \label{fig15}
\end{center}
\end{figure} 
	
Therefore, 
$$
\mathcal{F} (D) = \sum_{c} w_D(c) \, [\overline{D_c}] - w(D) \, [\overline{D}] = \sum_{c \neq c_*} w_D(c) \, [\overline{D_c}] + w_D(c_*) \, [\overline{D_{c_*}}] - w(D) \, [\overline{D}]  ,
$$

and 
\begin{eqnarray*}
\mathcal{F} (E) &= & \sum_c w_E (c) \, [\overline{E_c}] - w (E) \, [\overline{E}]    
=  \sum_{c \neq c_*} w_D (c) \,  [\overline{D_c}] - \Big( w(D) - w_D(c_*) \Big) \, [\overline{E}] \\ 
& = & \sum_{c \neq c_*} w_D(c) \, [\overline{D_c}] + w_D(c_*) \, [\overline{E}] - w(D) \, [\overline{E}]   
=  \sum _ { c\neq c_* } w_D(c) \, [\overline{D_c}] + w_D(c_*) \, [\overline{D_{c_*}}] - w(D) \, [\overline{D}]   
=  \mathcal{F} (D) ,
\end{eqnarray*}

(2) $\Omega_2$-move.
	Let $D$ and $E$ be virtual knotoid diagrams related by an $\Omega_2$-move, and $c_1, c_2 \in D$ are classical crossings  involved in $\Omega_2$. It is clear, see Fig.~\ref{fig16}, that $[\overline{D_{c_{1}}}] = [\overline{D_{c_{2}}}]$ and $w_D(c_{1}) = -w_D(c_{2})$. Moreover,  $[\overline{E}] = [\overline{D}]$ and $w(E)=w(D)-w_D(c_{1})-w_D(c_{2})$. Notice that $w_D(c)=w_E(c)$ and $[\overline{D_c}] = [\overline{E_c}]$ if $c$ is a classical crossing not involved in $\Omega_2$ move. 
	\begin{figure}[!htb]
		\begin{center}
			\tikzset{every picture/.style={line width=1.0pt}}
			\begin{tikzpicture}[x=0.75pt,y=0.75pt,yscale=-0.8,xscale=0.8]	
				\draw  [color={rgb, 255:red, 224; green, 20; blue, 20 }  ,draw opacity=1 ] (140.08,258.8) .. controls (140.08,233.29) and (160.85,212.61) .. (186.47,212.61) .. controls (212.09,212.61) and (232.86,233.29) .. (232.86,258.8) .. controls (232.86,284.32) and (212.09,305) .. (186.47,305) .. controls (160.85,305) and (140.08,284.32) .. (140.08,258.8) -- cycle ;
				\draw    (504.95,290.33) .. controls (388.95,252.33) and (556.55,253.93) .. (440.55,290.33) ;
				\draw  [color={rgb, 255:red, 224; green, 20; blue, 20 }  ,draw opacity=1 ] (-1.56,258.51) .. controls (-1.56,233) and (19.21,212.32) .. (44.83,212.32) .. controls (70.45,212.32) and (91.22,233) .. (91.22,258.51) .. controls (91.22,284.03) and (70.45,304.71) .. (44.83,304.71) .. controls (19.21,304.71) and (-1.56,284.03) .. (-1.56,258.51) -- cycle ;
				\draw    (128.4,265.21) -- (102,265.05) ;
				\draw   (108.43,268.08) -- (102.03,265.12) -- (108.38,262.06) ;
				\draw  [color={rgb, 255:red, 224; green, 20; blue, 20 }  ,draw opacity=1 ] (282.01,257.51) .. controls (282.01,232) and (302.78,211.32) .. (328.41,211.32) .. controls (354.03,211.32) and (374.8,232) .. (374.8,257.51) .. controls (374.8,283.03) and (354.03,303.71) .. (328.41,303.71) .. controls (302.78,303.71) and (282.01,283.03) .. (282.01,257.51) -- cycle ;
				\draw    (385.57,265.08) -- (411.97,264.94) ;
				\draw   (405.51,261.99) -- (411.94,264.86) -- (405.63,268) ;
				\draw  [color={rgb, 255:red, 224; green, 20; blue, 20 }  ,draw opacity=1 ] (426.58,258.51) .. controls (426.58,233) and (447.35,212.32) .. (472.97,212.32) .. controls (498.59,212.32) and (519.36,233) .. (519.36,258.51) .. controls (519.36,284.03) and (498.59,304.71) .. (472.97,304.71) .. controls (447.35,304.71) and (426.58,284.03) .. (426.58,258.51) -- cycle ;
				\draw    (242.47,264.5) -- (268.87,264.36) ;
				\draw   (248.2,267.55) -- (241.8,264.6) -- (248.15,261.54) ;
				\draw  [color={rgb, 255:red, 224; green, 20; blue, 20 }  ,draw opacity=1 ] (563.58,258.51) .. controls (563.58,233) and (584.35,212.32) .. (609.97,212.32) .. controls (635.59,212.32) and (656.36,233) .. (656.36,258.51) .. controls (656.36,284.03) and (635.59,304.71) .. (609.97,304.71) .. controls (584.35,304.71) and (563.58,284.03) .. (563.58,258.51) -- cycle ;
				\draw    (433.47,233.94) .. controls (440.74,246.52) and (494.52,256.44) .. (515.35,241.61) ;
				\draw    (300.86,221.93) .. controls (362.66,248.24) and (346.36,276.79) .. (302.86,295.93) ;
				\draw    (352.86,219.93) .. controls (346.86,220.62) and (335.16,229.53) .. (332.36,233.13) ;
				\draw    (322.36,274.33) .. controls (316.76,267.93) and (312.36,256.73) .. (321.56,245.13) ;
				\draw    (352.86,295.93) .. controls (347.66,295.13) and (336.36,290.33) .. (332.76,285.13) ;
				\draw   (320.16,255.17) -- (316.52,262.72) -- (312.2,255.58) ;
				\draw   (345.71,256.1) -- (341.3,263.22) -- (337.76,255.66) ;
				\draw    (570.47,233.94) .. controls (577.74,246.52) and (631.52,256.44) .. (652.35,241.61) ;
				\draw    (648.96,283.73) .. controls (641.74,271.12) and (588,260.99) .. (567.12,275.74) ;
				\draw    (525.57,262.08) -- (551.97,261.94) ;
				\draw    (154.21,226.81) .. controls (270.29,264.56) and (102.69,263.32) .. (218.61,226.67) ;
				\draw    (225.81,283.04) .. controls (218.51,270.48) and (164.71,260.68) .. (143.91,275.55) ;
				\draw    (5.07,233.94) .. controls (12.34,246.52) and (66.12,256.44) .. (86.95,241.61) ;
				\draw    (83.56,283.73) .. controls (76.34,271.12) and (22.6,260.99) .. (1.71,275.74) ;
				\draw   (545.51,258.99) -- (551.94,261.86) -- (545.63,265) ;
				\draw (177.75,311.41) node [anchor=north west][inner sep=0.75pt]    {$\overline{D_{c_{2}}}$};
				\draw (322.79,315.41) node [anchor=north west][inner sep=0.75pt]    {$D$};
				\draw (461.82,312.09) node [anchor=north west][inner sep=0.75pt]    {$\overline{D_{c_{1}}}$};
				\draw (336.18,231.64) node [anchor=north west][inner sep=0.75pt]  [font=\small]  {$c_{1}$};
				\draw (336.68,274.64) node [anchor=north west][inner sep=0.75pt]  [font=\small]  {$c_{2}$};
				\draw (525.35,242.19) node [anchor=north west][inner sep=0.75pt]  [font=\small]  {$f\Omega _{1}$};
				\draw (102.35,242.19) node [anchor=north west][inner sep=0.75pt]  [font=\small]  {$f\Omega _{1}$};
			\end{tikzpicture}	
			\caption{$0$-smoothing and flattening related to $\Omega_2$.} \label{fig16}
		\end{center}
	\end{figure}

	Therefore,  
	\begin{eqnarray*}
		\mathcal{F} (D) &= & \sum_c w_D(c) \, [\overline{D_c}] - w(D) \, [\overline{D}]  = \sum_ {c \notin \{ c_{1},c_{2}\}} w_D(c) \, [\overline{D_c}] + w_D(c_{1}) \, [\overline{D_{c_{1}}}] + w_D(c_{2}) \, [\overline{D_{c_{2}}}] - w (D) \, [\overline{D}] \\ 
		& = & \sum_{ c \notin \{c_{1}, c_{2}\}} w_D(c) \, [\overline{D_c}] - w (D) \, [\overline{D}],
	\end{eqnarray*}
	and 
	\begin{eqnarray*}
		\mathcal{F} (E) & = &\sum_{ c \notin \{c_{1}, c_{2}\}} w_E(c) \, [\overline{E}_{c}] - w (E) \, [\overline{E}]    
		=  \sum_{c \notin \{c_{1}, c_{2}\}} w_D(c) \, [\overline{D_c}] - \Big(w(D)-w_D(c_{1})-w_D(c_{2}) \Big) \, [\overline{D}] \\ 
		& = & \sum_{c \notin \{ c_{1}, c_{2}\}} w_D(c) \, [\overline{D_c}] - w(D) \, [\overline{D}]  
		=  \mathcal{F} ( D ),
	\end{eqnarray*}
	
	(3) $\Omega_3$-move. 
	Let $D$ and $E$ be virtual knotoid diagrams related by an $\Omega_3$-move, and $c_1, c_2, c_3 \in D$ are classical crossings involved in $\Omega_3$. Denote by  $e_1, e_2, e_3$ the corresponding crossings in $E$. It is clear, see Fig.~\ref{fig17}, that for $i=1,2,3$ the equalities $[\overline{D_{c_i}}] = [\overline{E_{e_i}}]$ and $w_D (c_i) = w_{E} (e_i)$ hold. Moreover, $[\overline{E}] = [\overline{D}]$ and $w(E) = w(D)$. Notice that $w_D(c)=w_E(c)$ and $[\overline{E_c}] = [\overline{D_c}]$ if $c$ is a classical crossing not involves in the $\Omega_3$-move.   
	\begin{figure}[!htb]
		\begin{center}
			\tikzset{every picture/.style={line width=1.0pt}} 
			\begin{tikzpicture}[x=0.75pt,y=0.75pt,yscale=-0.8,xscale=0.8]
				\draw  [color={rgb, 255:red, 224; green, 20; blue, 20 }  ,draw opacity=1 ] (179.08,413.14) .. controls (179.08,387.63) and (199.85,366.94) .. (225.47,366.94) .. controls (251.09,366.94) and (271.86,387.63) .. (271.86,413.14) .. controls (271.86,438.65) and (251.09,459.33) .. (225.47,459.33) .. controls (199.85,459.33) and (179.08,438.65) .. (179.08,413.14) -- cycle ;
				\draw  [color={rgb, 255:red, 224; green, 20; blue, 20 }  ,draw opacity=1 ] (37.44,412.85) .. controls (37.44,387.33) and (58.21,366.65) .. (83.83,366.65) .. controls (109.45,366.65) and (130.22,387.33) .. (130.22,412.85) .. controls (130.22,438.36) and (109.45,459.04) .. (83.83,459.04) .. controls (58.21,459.04) and (37.44,438.36) .. (37.44,412.85) -- cycle ;
				\draw   (52.92,433.95) -- (45.6,429.89) -- (53.02,426) ;
				\draw    (142.41,570.92) -- (168.81,571.06) ;
				\draw   (162.38,568.03) -- (168.78,570.98) -- (162.43,574.05) ;
				\draw  [color={rgb, 255:red, 224; green, 20; blue, 20 }  ,draw opacity=1 ] (321.01,412.85) .. controls (321.01,387.33) and (341.78,366.65) .. (367.41,366.65) .. controls (393.03,366.65) and (413.8,387.33) .. (413.8,412.85) .. controls (413.8,438.36) and (393.03,459.04) .. (367.41,459.04) .. controls (341.78,459.04) and (321.01,438.36) .. (321.01,412.85) -- cycle ;
				\draw    (394.56,450.22) .. controls (392.44,435.85) and (346.12,406.79) .. (321.29,412.88) ;
				\draw  [color={rgb, 255:red, 224; green, 20; blue, 20 }  ,draw opacity=1 ] (465.58,412.85) .. controls (465.58,387.33) and (486.35,366.65) .. (511.97,366.65) .. controls (537.59,366.65) and (558.36,387.33) .. (558.36,412.85) .. controls (558.36,438.36) and (537.59,459.04) .. (511.97,459.04) .. controls (486.35,459.04) and (465.58,438.36) .. (465.58,412.85) -- cycle ;
				\draw    (556.95,401.89) .. controls (542.88,398.25) and (498.09,429.62) .. (494.05,454.87) ;
				\draw    (52.14,445.9) -- (109.38,374.96) ;
				\draw    (60.58,374.16) -- (80.9,400.84) ;
				\draw    (88.5,410.86) -- (115.38,446.16) ;
				\draw    (41.11,429.76) -- (58.74,429.92) ;
				\draw    (70.46,429.92) -- (96.28,430.1) ;
				\draw    (109.74,430.1) -- (126.09,430.25) ;
				\draw   (112.93,436.05) -- (114.01,444.35) -- (106.47,440.69) ;
				\draw   (99.15,381.6) -- (106.82,378.24) -- (105.4,386.5) ;
				\draw    (182.91,429.76) -- (267.89,430.25) ;
				\draw    (248.56,372.71) .. controls (236.36,380.6) and (229.15,434.81) .. (245.01,454.87) ;
				\draw    (202.77,453.6) .. controls (215,445.75) and (222.42,391.58) .. (206.65,371.45) ;
				\draw    (342.3,451.01) -- (399.55,380.08) ;
				\draw    (344.18,373.51) .. controls (344.79,388.03) and (387.85,421.75) .. (413.18,418.26) ;
				\draw    (479.25,381.27) -- (534.05,453.27) ;
				\draw    (470.15,430.49) .. controls (484.6,432.01) and (524.26,394.37) .. (524.53,368.8) ;
				\draw  [color={rgb, 255:red, 224; green, 20; blue, 20 }  ,draw opacity=1 ] (180.41,564.78) .. controls (180.41,539.27) and (201.18,518.58) .. (226.8,518.58) .. controls (252.43,518.58) and (273.2,539.27) .. (273.2,564.78) .. controls (273.2,590.29) and (252.43,610.98) .. (226.8,610.98) .. controls (201.18,610.98) and (180.41,590.29) .. (180.41,564.78) -- cycle ;
				\draw  [color={rgb, 255:red, 224; green, 20; blue, 20 }  ,draw opacity=1 ] (38.77,564.49) .. controls (38.77,538.98) and (59.54,518.29) .. (85.16,518.29) .. controls (110.78,518.29) and (131.55,538.98) .. (131.55,564.49) .. controls (131.55,590) and (110.78,610.68) .. (85.16,610.68) .. controls (59.54,610.68) and (38.77,590) .. (38.77,564.49) -- cycle ;
				\draw   (54.75,550.84) -- (47.43,546.78) -- (54.86,542.89) ;
				\draw    (140.92,419.28) -- (167.32,419.42) ;
				\draw   (160.88,416.39) -- (167.29,419.34) -- (160.94,422.41) ;
				\draw  [color={rgb, 255:red, 224; green, 20; blue, 20 }  ,draw opacity=1 ] (322.35,564.49) .. controls (322.35,538.98) and (343.12,518.29) .. (368.74,518.29) .. controls (394.36,518.29) and (415.13,538.98) .. (415.13,564.49) .. controls (415.13,590) and (394.36,610.68) .. (368.74,610.68) .. controls (343.12,610.68) and (322.35,590) .. (322.35,564.49) -- cycle ;
				\draw    (395.89,601.87) .. controls (393.78,587.49) and (347.45,558.43) .. (322.62,564.52) ;
				\draw  [color={rgb, 255:red, 224; green, 20; blue, 20 }  ,draw opacity=1 ] (466.91,564.49) .. controls (466.91,538.98) and (487.68,518.29) .. (513.31,518.29) .. controls (538.93,518.29) and (559.7,538.98) .. (559.7,564.49) .. controls (559.7,590) and (538.93,610.68) .. (513.31,610.68) .. controls (487.68,610.68) and (466.91,590) .. (466.91,564.49) -- cycle ;
				\draw    (558.28,553.54) .. controls (544.21,549.89) and (499.42,581.26) .. (495.38,606.51) ;
				\draw    (59.97,603.02) -- (117.22,532.08) ;
				\draw    (54.92,530.81) -- (80.45,564.18) ;
				\draw    (90.8,577.76) -- (110.07,603) ;
				\draw    (42.94,546.65) -- (60.57,546.81) ;
				\draw    (72.3,546.81) -- (98.11,546.99) ;
				\draw    (111.57,546.99) -- (127.92,547.14) ;
				\draw   (104.93,589.25) -- (106.01,597.55) -- (98.47,593.89) ;
				\draw   (107.37,538.13) -- (115.04,534.78) -- (113.63,543.03) ;
				\draw    (184.24,546.9) -- (269.22,547.39) ;
				\draw    (249.89,524.35) .. controls (237.69,532.24) and (230.48,586.45) .. (246.34,606.51) ;
				\draw    (204.1,605.24) .. controls (216.33,597.4) and (223.76,543.22) .. (207.98,523.09) ;
				\draw    (343.64,602.66) -- (400.88,531.72) ;
				\draw    (345.52,525.16) .. controls (346.13,539.67) and (389.18,573.39) .. (414.51,569.9) ;
				\draw    (480.58,532.92) -- (535.38,604.92) ;
				\draw    (471.48,582.13) .. controls (485.94,583.66) and (525.6,546.01) .. (525.86,520.44) ;
				\draw (216.75,466.43) node [anchor=north west][inner sep=0.75pt]    {$\overline{D_{c_{1}}}$};
				\draw (73.6,471.3) node [anchor=north west][inner sep=0.75pt]    {$D$};
				\draw (361.79,469.43) node [anchor=north west][inner sep=0.75pt]    {$\overline{D_{c_{2}{}}}$};
				\draw (500.82,466.43) node [anchor=north west][inner sep=0.75pt]    {$\overline{D_{c_{3}}}$};
				\draw (78.2,383.54) node [anchor=north west][inner sep=0.75pt]  [font=\small]  {$c_{1}$};
				\draw (101.5,414.54) node [anchor=north west][inner sep=0.75pt]  [font=\small]  {$c_{2}$};
				\draw (53.1,414.54) node [anchor=north west][inner sep=0.75pt]  [font=\small]  {$c_{3}$};
				\draw (218.08,618.07) node [anchor=north west][inner sep=0.75pt]    {$\overline{E_{e_{1}}}$};
				\draw (74.93,624.07) node [anchor=north west][inner sep=0.75pt]    {$E$};
				\draw (363.13,618.07) node [anchor=north west][inner sep=0.75pt]    {$\overline{E_{e_{2}{}}}$};
				\draw (502.16,618.07) node [anchor=north west][inner sep=0.75pt]    {$\overline{E_{e_{3}}}$};
				\draw (78.58,579.57) node [anchor=north west][inner sep=0.75pt]  [font=\small]  {$e_{1}$};
				\draw (55,548.6) node [anchor=north west][inner sep=0.75pt]  [font=\small]  {$e_{2}$};
				\draw (101.57,548.6) node [anchor=north west][inner sep=0.75pt]  [font=\small]  {$e_{3}$};			
			\end{tikzpicture}
			\caption{$0$-smoothing and flattening related to $\Omega_3$.} \label{fig17}
		\end{center}
	\end{figure} 
	
	Therefore, 
	\begin{eqnarray*}
		\mathcal{F} (D) & = & \sum_c w_D(c) \, [\overline{D_c}] - w(D) \, [\overline{D}] \\
		& = & \sum_{ c \notin \{c_1, c_2, c_3 \}} w_D (c) \, [\overline{D_c}] + w_D(c_1) \, [\overline{D_{c_1}}] + w_D (c_2) \, [\overline{D_{c_2}}] + w_D(c_3) \, [\overline{D_{c_3}}] - w (D) \, [\overline{D}] ,
	\end{eqnarray*}
	
	and
	\begin{eqnarray*} 
		\mathcal{F} (E) &=&  \sum_c w_E (c) \, [\overline{E_c}] - w(E) \, [\overline{E}] \\ 
		&=& \sum_{c \notin \{e_1, e_2, e_3 \}} w_E(c) \, [\overline{E_c}] + w_E(e_1) \, [\overline{E_{e_1}}] + w_E(e_2) \, [\overline{E_{e_2}}]  + w_E(e_3) \, [\overline{E_{e_3}}] - w (E) \, [\overline{E}] \\ 
		&=& \sum_{c \notin \{c_1, c_2, c_3\}} w_D (c) \, [\overline{D_c}] + w_D(c_1) \, [\overline{D_{c_1}}] + w_D(c_2) \, [\overline{D_{c_2}}] + w_D(c_3) \, [\overline{D_{c_3}}] - w (D) \, [\overline{D}]   
		= \mathcal{F} (D),
	\end{eqnarray*}

	(4) $\Omega^{m}_3$-move.  
	Let $D$ and $E$ be virtual knotoid diagrams related by an $\Omega^{m}_3$-move, and $c_* \in D$  a classical crossing involved in  $\Omega^{m}_3$. Denote by $e_*$ the corresponding classical crossing in $E$.  It is clear, see Fig.~\ref{fig18}, that $[\overline{E_{e_*}}] = [\overline{D_{c_*}}]$ and  $w_E(e_*)=w_D(c_*)$. Moreover, $[\overline{E}] = [\overline{D}]$ and $w(E)=w(D)$. Notice that $w_D(c)=w_E(c)$ and $[\overline{E_c}] = [\overline{D_c}]$ if $c$ is a classical crossing not involved in $\Omega^{m}_3$. Therefore,  $\mathcal{F} (E) = \mathcal{F} (D)$.
	
	\begin{figure}[!htb]
		\begin{center}
			\tikzset{every picture/.style={line width=1.0pt}} 
			\begin{tikzpicture}[x=0.75pt,y=0.75pt,yscale=-1,xscale=1]
				\draw  [color={rgb, 255:red, 224; green, 20; blue, 20 }  ,draw opacity=1 ] (60.17,1339.57) .. controls (60.17,1317.56) and (78,1299.72) .. (100,1299.72) .. controls (122,1299.72) and (139.83,1317.56) .. (139.83,1339.57) .. controls (139.83,1361.58) and (122,1379.42) .. (100,1379.42) .. controls (78,1379.42) and (60.17,1361.58) .. (60.17,1339.57) -- cycle ;
				\draw   (131.55,1346.02) -- (137.88,1349.43) -- (131.56,1352.87) ;
				\draw    (72.79,1368.08) -- (121.94,1306.89) ;
				\draw    (80.04,1306.2) -- (97.48,1329.21) ;
				\draw    (104.01,1337.85) -- (127.09,1368.31) ;
				\draw   (124.98,1359.58) -- (125.91,1366.74) -- (119.44,1363.59) ;
				\draw   (113.15,1312.61) -- (119.74,1309.72) -- (118.52,1316.84) ;
				\draw  [color={rgb, 255:red, 224; green, 20; blue, 20 }  ,draw opacity=1 ] (520.67,1469.16) .. controls (520.67,1447.12) and (538.56,1429.26) .. (560.63,1429.26) .. controls (582.7,1429.26) and (600.59,1447.12) .. (600.59,1469.16) .. controls (600.59,1491.19) and (582.7,1509.06) .. (560.63,1509.06) .. controls (538.56,1509.06) and (520.67,1491.19) .. (520.67,1469.16) -- cycle ;
				\draw  [color={rgb, 255:red, 224; green, 20; blue, 20 }  ,draw opacity=1 ] (60.17,1469.41) .. controls (60.17,1447.37) and (78.06,1429.51) .. (100.13,1429.51) .. controls (122.2,1429.51) and (140.09,1447.37) .. (140.09,1469.41) .. controls (140.09,1491.44) and (122.2,1509.3) .. (100.13,1509.3) .. controls (78.06,1509.3) and (60.17,1491.44) .. (60.17,1469.41) -- cycle ;
				\draw   (130.63,1452.79) -- (137.01,1456.16) -- (130.69,1459.65) ;
				\draw    (474.92,1341.21) -- (501.32,1341.35) ;
				\draw   (494.88,1338.33) -- (501.29,1341.27) -- (494.94,1344.34) ;
				\draw    (78.43,1502.69) -- (127.74,1441.42) ;
				\draw    (74.07,1440.32) -- (96.07,1469.14) ;
				\draw    (104.98,1480.87) -- (121.58,1502.67) ;
				\draw   (120.15,1494.79) -- (121.09,1501.96) -- (114.59,1498.8) ;
				\draw   (121.26,1444.14) -- (127.86,1441.25) -- (126.65,1448.38) ;
				\draw    (61.68,1349.12) -- (137.93,1349.47) ;
				\draw    (62.92,1455.56) -- (137.53,1456.21) ;
				\draw  [color={rgb, 255:red, 224; green, 20; blue, 20 }  ,draw opacity=1 ] (209.69,1339.24) .. controls (209.69,1317.19) and (227.52,1299.32) .. (249.51,1299.32) .. controls (271.5,1299.32) and (289.33,1317.19) .. (289.33,1339.24) .. controls (289.33,1361.28) and (271.5,1379.16) .. (249.51,1379.16) .. controls (227.52,1379.16) and (209.69,1361.28) .. (209.69,1339.24) -- cycle ;
				\draw  [color={rgb, 255:red, 224; green, 20; blue, 20 }  ,draw opacity=1 ] (370.44,1339.42) .. controls (370.44,1317.42) and (388.29,1299.58) .. (410.31,1299.58) .. controls (432.33,1299.58) and (450.18,1317.42) .. (450.18,1339.42) .. controls (450.18,1361.42) and (432.33,1379.25) .. (410.31,1379.25) .. controls (388.29,1379.25) and (370.44,1361.42) .. (370.44,1339.42) -- cycle ;
				\draw   (442.29,1345.73) -- (448.63,1349.14) -- (442.3,1352.58) ;
				\draw    (383.07,1367.92) -- (432.27,1306.75) ;
				\draw    (390.33,1306.06) -- (407.79,1329.06) ;
				\draw    (414.32,1337.7) -- (437.43,1368.15) ;
				\draw   (391.46,1313.1) -- (390.34,1305.97) -- (396.9,1308.95) ;
				\draw   (425.64,1309.64) -- (432.23,1306.75) -- (431.02,1313.87) ;
				\draw  [color={rgb, 255:red, 224; green, 20; blue, 20 }  ,draw opacity=1 ] (210.43,1469.13) .. controls (210.43,1447.09) and (228.32,1429.23) .. (250.39,1429.23) .. controls (272.46,1429.23) and (290.35,1447.09) .. (290.35,1469.13) .. controls (290.35,1491.16) and (272.46,1509.03) .. (250.39,1509.03) .. controls (228.32,1509.03) and (210.43,1491.16) .. (210.43,1469.13) -- cycle ;
				\draw  [color={rgb, 255:red, 224; green, 20; blue, 20 }  ,draw opacity=1 ] (369.92,1469.71) .. controls (369.92,1447.68) and (387.81,1429.81) .. (409.88,1429.81) .. controls (431.95,1429.81) and (449.84,1447.68) .. (449.84,1469.71) .. controls (449.84,1491.74) and (431.95,1509.61) .. (409.88,1509.61) .. controls (387.81,1509.61) and (369.92,1491.74) .. (369.92,1469.71) -- cycle ;
				\draw   (440.54,1452.94) -- (446.75,1456.6) -- (440.29,1459.8) ;
				\draw    (159.92,1339.21) -- (186.32,1339.35) ;
				\draw   (179.88,1336.33) -- (186.29,1339.27) -- (179.94,1342.34) ;
				\draw    (388.19,1502.99) -- (437.49,1441.72) ;
				\draw    (383.83,1440.62) -- (405.82,1469.45) ;
				\draw    (414.74,1481.17) -- (431.34,1502.97) ;
				\draw   (384.82,1447.93) -- (384.08,1440.74) -- (390.49,1444.07) ;
				\draw   (430.76,1444.82) -- (437.37,1441.92) -- (436.15,1449.05) ;
				\draw   (392.81,1348.92) .. controls (392.81,1345.92) and (395.24,1343.5) .. (398.23,1343.5) .. controls (401.22,1343.5) and (403.65,1345.92) .. (403.65,1348.92) .. controls (403.65,1351.91) and (401.22,1354.33) .. (398.23,1354.33) .. controls (395.24,1354.33) and (392.81,1351.91) .. (392.81,1348.92) -- cycle ;
				\draw   (417.51,1349.04) .. controls (417.51,1346.05) and (419.94,1343.62) .. (422.93,1343.62) .. controls (425.92,1343.62) and (428.35,1346.05) .. (428.35,1349.04) .. controls (428.35,1352.03) and (425.92,1354.46) .. (422.93,1354.46) .. controls (419.94,1354.46) and (417.51,1352.03) .. (417.51,1349.04) -- cycle ;
				\draw    (371.95,1348.97) -- (448.27,1349.31) ;
				\draw    (372.8,1455.86) -- (447.41,1456.51) ;
				\draw  [color={rgb, 255:red, 224; green, 20; blue, 20 }  ,draw opacity=1 ] (520.16,1338.65) .. controls (520.16,1316.67) and (538.03,1298.84) .. (560.08,1298.84) .. controls (582.14,1298.84) and (600.01,1316.67) .. (600.01,1338.65) .. controls (600.01,1360.64) and (582.14,1378.47) .. (560.08,1378.47) .. controls (538.03,1378.47) and (520.16,1360.64) .. (520.16,1338.65) -- cycle ;
				\draw    (523.45,1352.98) -- (596.59,1353.4) ;
				\draw    (582.15,1372.25) .. controls (584.73,1352.43) and (551.17,1317.95) .. (538.26,1371.39) ;
				\draw    (524.66,1320.4) .. controls (531.84,1330.65) and (578.69,1335.12) .. (595.44,1320.82) ;
				\draw    (286.64,1353.97) -- (213.7,1354.83) ;
				\draw    (232.42,1375.01) .. controls (242.53,1367.64) and (246.26,1320.54) .. (231.77,1303.95) ; 
				\draw   (245.93,1354.33) .. controls (245.98,1357.32) and (243.6,1359.79) .. (240.6,1359.84) .. controls (237.61,1359.89) and (235.14,1357.51) .. (235.09,1354.52) .. controls (235.04,1351.52) and (237.42,1349.06) .. (240.42,1349) .. controls (243.41,1348.95) and (245.88,1351.33) .. (245.93,1354.33) -- cycle ;
				\draw   (258.2,1359.53) .. controls (255.21,1359.57) and (252.74,1357.18) .. (252.7,1354.19) .. controls (252.65,1351.2) and (255.04,1348.74) .. (258.03,1348.69) .. controls (261.03,1348.65) and (263.49,1351.03) .. (263.53,1354.03) .. controls (263.58,1357.02) and (261.19,1359.48) .. (258.2,1359.53) -- cycle ;
				\draw    (265.84,1374.74) .. controls (255.73,1367.37) and (252,1320.27) .. (266.49,1303.68) ;
				\draw    (523.72,1454.87) -- (596.92,1454.45) ;
				\draw    (582.47,1435.56) .. controls (585.05,1455.42) and (551.46,1489.97) .. (538.54,1436.42) ;
				\draw    (524.93,1487.52) .. controls (532.12,1477.24) and (579.01,1472.77) .. (595.77,1487.1) ;
				\draw    (287.26,1453.44) -- (214.07,1454.3) ;
				\draw    (233.59,1505.07) .. controls (243.73,1497.71) and (247.48,1450.63) .. (232.93,1434.05) ;
				\draw    (267.12,1504.79) .. controls (256.98,1497.43) and (253.24,1450.36) .. (267.78,1433.77) ;
				\draw   (88.02,1354.69) .. controls (85.03,1354.74) and (82.56,1352.35) .. (82.52,1349.36) .. controls (82.47,1346.37) and (84.86,1343.9) .. (87.85,1343.86) .. controls (90.85,1343.81) and (93.31,1346.2) .. (93.35,1349.19) .. controls (93.4,1352.19) and (91.01,1354.65) .. (88.02,1354.69) -- cycle ;
				\draw   (112.95,1354.69) .. controls (109.96,1354.74) and (107.5,1352.35) .. (107.45,1349.36) .. controls (107.41,1346.37) and (109.79,1343.9) .. (112.79,1343.86) .. controls (115.78,1343.81) and (118.24,1346.2) .. (118.29,1349.19) .. controls (118.33,1352.19) and (115.94,1354.65) .. (112.95,1354.69) -- cycle ;
				\draw   (545.27,1358.36) .. controls (542.28,1358.41) and (539.82,1356.02) .. (539.77,1353.02) .. controls (539.73,1350.03) and (542.12,1347.57) .. (545.11,1347.52) .. controls (548.1,1347.48) and (550.56,1349.87) .. (550.61,1352.86) .. controls (550.65,1355.85) and (548.26,1358.31) .. (545.27,1358.36) -- cycle ;
				\draw   (576.87,1358.63) .. controls (573.88,1358.67) and (571.42,1356.28) .. (571.37,1353.29) .. controls (571.33,1350.3) and (573.72,1347.84) .. (576.71,1347.79) .. controls (579.7,1347.75) and (582.16,1350.13) .. (582.21,1353.13) .. controls (582.25,1356.12) and (579.86,1358.58) .. (576.87,1358.63) -- cycle ;
				\draw    (158.59,1475.88) -- (184.99,1476.02) ;
				\draw   (178.55,1472.99) -- (184.96,1475.94) -- (178.61,1479.01) ;
				\draw    (476.52,1471.61) -- (502.92,1471.75) ;
				\draw   (496.48,1468.73) -- (502.89,1471.67) -- (496.54,1474.74) ;
				\draw   (545.68,1460.15) .. controls (542.69,1460.2) and (540.23,1457.81) .. (540.18,1454.82) .. controls (540.14,1451.82) and (542.53,1449.36) .. (545.52,1449.32) .. controls (548.51,1449.27) and (550.97,1451.66) .. (551.02,1454.65) .. controls (551.07,1457.64) and (548.68,1460.11) .. (545.68,1460.15) -- cycle ;
				\draw   (576.72,1459.96) .. controls (573.73,1460.01) and (571.27,1457.62) .. (571.22,1454.63) .. controls (571.18,1451.64) and (573.57,1449.17) .. (576.56,1449.13) .. controls (579.55,1449.08) and (582.01,1451.47) .. (582.06,1454.46) .. controls (582.11,1457.46) and (579.72,1459.92) .. (576.72,1459.96) -- cycle ;
				\draw   (85.64,1461.09) .. controls (82.65,1461.13) and (80.19,1458.75) .. (80.14,1455.75) .. controls (80.1,1452.76) and (82.49,1450.3) .. (85.48,1450.25) .. controls (88.47,1450.21) and (90.93,1452.6) .. (90.98,1455.59) .. controls (91.02,1458.58) and (88.64,1461.04) .. (85.64,1461.09) -- cycle ;
				\draw   (116.02,1461.46) .. controls (113.03,1461.51) and (110.56,1459.12) .. (110.52,1456.13) .. controls (110.47,1453.14) and (112.86,1450.67) .. (115.85,1450.63) .. controls (118.85,1450.58) and (121.31,1452.97) .. (121.35,1455.96) .. controls (121.4,1458.96) and (119.01,1461.42) .. (116.02,1461.46) -- cycle ;
				\draw   (259.59,1459.09) .. controls (256.6,1459.13) and (254.13,1456.75) .. (254.09,1453.75) .. controls (254.04,1450.76) and (256.43,1448.3) .. (259.42,1448.25) .. controls (262.42,1448.21) and (264.88,1450.6) .. (264.92,1453.59) .. controls (264.97,1456.58) and (262.58,1459.04) .. (259.59,1459.09) -- cycle ;
				\draw   (240.96,1459.34) .. controls (237.97,1459.38) and (235.51,1457) .. (235.46,1454) .. controls (235.42,1451.01) and (237.81,1448.55) .. (240.8,1448.5) .. controls (243.79,1448.46) and (246.25,1450.85) .. (246.3,1453.84) .. controls (246.34,1456.83) and (243.95,1459.29) .. (240.96,1459.34) -- cycle ;
				\draw   (425.55,1461.59) .. controls (422.56,1461.63) and (420.1,1459.25) .. (420.05,1456.25) .. controls (420.01,1453.26) and (422.4,1450.8) .. (425.39,1450.75) .. controls (428.38,1450.71) and (430.84,1453.1) .. (430.89,1456.09) .. controls (430.93,1459.08) and (428.55,1461.54) .. (425.55,1461.59) -- cycle ;
				\draw   (395.3,1461.46) .. controls (392.31,1461.51) and (389.85,1459.12) .. (389.8,1456.13) .. controls (389.76,1453.14) and (392.15,1450.67) .. (395.14,1450.63) .. controls (398.13,1450.58) and (400.59,1452.97) .. (400.64,1455.96) .. controls (400.68,1458.96) and (398.3,1461.42) .. (395.3,1461.46) -- cycle ;
				\draw (89.15,1383.96) node [anchor=north west][inner sep=0.75pt]    {$D$};
				\draw (103.91,1328.07) node [anchor=north west][inner sep=0.75pt]  [font=\small]  {$c_{*}$};
				\draw (105.92,1468.3) node [anchor=north west][inner sep=0.75pt]  [font=\small]  {$e_{*}$};
				\draw (240.54,1381.1) node [anchor=north west][inner sep=0.75pt]    {$\overline{D_{c_{*}}}$};
				\draw (400.17,1383.96) node [anchor=north west][inner sep=0.75pt]    {$D$};
				\draw (414.56,1328.07) node [anchor=north west][inner sep=0.75pt]  [font=\small]  {$c_{*}$};
				\draw (415.68,1468.3) node [anchor=north west][inner sep=0.75pt]  [font=\small]  {$e_{*}$};
				\draw (141,1551.9) node [anchor=north west][inner sep=0.75pt]    {case (a)};
				\draw (475,1553.67) node [anchor=north west][inner sep=0.75pt]    {case (b)};
				\draw (548.04,1381.1) node [anchor=north west][inner sep=0.75pt]    {$\overline{D_{c_{*}}}$};
				\draw (89.82,1519.79) node [anchor=north west][inner sep=0.75pt]    {$E$};
				\draw (239.7,1516.43) node [anchor=north west][inner sep=0.75pt]    {$\overline{E_{e_{*}}{}}$};
				\draw (399.33,1519.79) node [anchor=north west][inner sep=0.75pt]    {$E$};
				\draw (544.2,1517.43) node [anchor=north west][inner sep=0.75pt]    {$\overline{E_{e_{*}}}$};		
			\end{tikzpicture}
			\caption{$0$-smoothing and flattening related to $\Omega_3^{m}$.} \label{fig18}
		\end{center}
	\end{figure} 	
Thus, $\mathcal{F} (D)$ is a virtual knotoid invariant. 
\end{proof}

Therefore, if $K$ is a virtual knotoid and $D$ is its diagram, then the notation $\mathcal{F} (K) = \mathcal{F} (D)$ is correct. 

In~\cite{Pet}, Petit provided the 1-smoothing invariant for long virtual knots. Since virtual knotoids allow $\Omega_{v}$-move, any virtual knotoid is a long virtual knot. Therefore, the invariants defined for long virtual knots are naturally invariants for virtual knotoids.

\begin{definition} [\cite{Pet}] \label{def3.6}
{\rm 
Let $D$ be an oriented virtual knotoid diagram. Let ${}_{c}D$ be the two-component virtual multi-knotoid obtained by $1$-smoothing $D$ at a classical crossing $c$ in $D$. Denote by $[\overline{{}_cD}]$ and $[\overline{D}]$ the equivalence classes of flattening $\overline{{}_cD}$ of ${}_cD$ and flattening $\overline{D}$ of $D$. Furthermore, let $\overline{D}_{link}$ denote the union of the flattening $\overline{D}$ of $D$ with an unlinked copy of the unknot and denote by $[\overline{D}_{link}]$ the equivalence classes of $\overline{D}_{link}$. Let
\begin{equation}
\mathcal{L}(D)= \sum_c w_{D} (c) \, [\overline{{}_cD}] - w(D)  \, [\overline{D}_{link}], \label{eqn:defL}
\end{equation}
where the sum is taken over all classical crossings of $D$ with $w_{D}(c) = \operatorname{sgn} (c)$ and $\displaystyle w(D) = \sum_c w_{D}(c)$.}  
\end{definition}

\begin{theorem} \cite{Pet}
Let $D$ be a virtual knotoid diagram. Then $\mathcal{L} (D)$ is a virtual knotoid invariant. 
\end{theorem}

\begin{remark}\label{re1} We generalize the chord index axioms for virtual knots given in \cite{Che} to virtual knotoids.  Let $D$ be a virtual knotoid diagram and $c$ be a classical crossing of $D$. Then it is easy to see that the equivalence class $[\overline{D_c}]$ of the flat virtual knotoid diagram $\overline{D}_c$ and the equivalence class $[\overline{{}_cD}]$ of the flat virtual multi-knotoid diagram $\overline{{}_cD}$ satisfy the chord index axioms both. 
\end{remark}

\begin{corollary}
If $K$ is a classical knotoid in $S^2$, then $\mathcal{F} (K)=0$.
\end{corollary}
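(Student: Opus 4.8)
The plan is to exploit the fact that a classical knotoid carries no virtual crossings, so that every flat diagram entering the definition of $\mathcal{F}$ is an honest flat knotoid in $S^2$, and then to invoke Proposition~\ref{prop2}. First I would fix a classical diagram $D$ of $K$. Since $D$ has no virtual crossings and the $0$-smoothing of a classical crossing only reconnects the four local strands compatibly with orientation---without introducing any new virtual crossing---each diagram $D_c$ obtained by $0$-smoothing $D$ at a classical crossing $c$ is again a classical knotoid diagram. Consequently both $\overline{D}$ and every $\overline{D_c}$ are flat knotoid diagrams in $S^2$ in the strict sense, i.e.\ with flat crossings only and no virtual crossings.

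Next I would apply Proposition~\ref{prop2}, which asserts that every flat knotoid in $S^2$ is flat-equivalent to the trivial knotoid. Writing $[\overline{K_0}]$ for the class of the trivial flat knotoid, this yields $[\overline{D}] = [\overline{K_0}]$ and $[\overline{D_c}] = [\overline{K_0}]$ for every classical crossing $c$ of $D$. Substituting these equalities into the formula of Definition~\ref{def4.2} and using $w(D) = \sum_c w_D(c)$ gives
\begin{align*}
\mathcal{F}(D) &= \sum_c w_D(c)\,[\overline{D_c}] - w(D)\,[\overline{D}] = \Big(\sum_c w_D(c)\Big)[\overline{K_0}] - w(D)\,[\overline{K_0}] = w(D)\,[\overline{K_0}] - w(D)\,[\overline{K_0}] = 0.
\end{align*}
Since $\mathcal{F}$ is a virtual knotoid invariant by Theorem~\ref{th4.2}, this value does not depend on the chosen diagram, so $\mathcal{F}(K) = 0$ as claimed.

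The only point requiring care---and the step I would treat as the main, if minor, obstacle---is the verification that flattening a $0$-smoothed classical diagram never leaves the class of flat knotoids in $S^2$ to which Proposition~\ref{prop2} applies. This is immediate once one observes that the $0$-smoothing operation depicted in Fig.~\ref{fig14} is purely local and produces no virtual crossing, so that the resulting flat object still lies in the (stronger) world of flat knotoids in $S^2$ rather than only in the flat \emph{virtual} world; hence no appeal to the weaker flat virtual equivalence is needed and Proposition~\ref{prop2} may be invoked directly on each summand.
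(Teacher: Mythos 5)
Your proof is correct and follows essentially the same route as the paper's: $0$-smoothing a classical diagram yields classical knotoid diagrams, whose flattenings are flat knotoids in $S^2$ and hence trivial by Proposition~\ref{prop2}, so the sum in Definition~\ref{def4.2} collapses to $w(D)\,[\overline{K_0}] - w(D)\,[\overline{K_0}] = 0$. One cosmetic slip: the paper's $0$-smoothing is the smoothing \emph{against} orientation (Fig.~\ref{fig14}), not the orientation-compatible one as you describe it, but this does not affect your argument, since the facts you actually use---that $D_c$ remains a one-component classical knotoid diagram with no virtual crossings, and that flat-classical equivalence implies flat-virtual equivalence of the classes---hold exactly as stated.
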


\begin{proof} 
Let $D$ be a diagram of a classical knotoid $K$. Then after applying 0-smoothing to any classical crossing $c$ of $D$ and flattening of the smoothed classical knotoid diagram, we obtain $\overline{D}_{c}$, which is a flat knotoid diagram of flat classical knotoid $\overline{K}_c$. By Proposition~\ref{prop2}, $\overline{K}_{c}$ and $\overline{K}$ are both flat-equivalent to the trivial flat knotoid $\overline{K_0}$, i.e. $\overline{K}_{c}=\overline{K} = \overline{K_0}$. Then $\mathcal{F} (K) = \sum_{c} w (c) \, \overline{K}_{c}- w (K) \, \overline{K} = 0$.
\end{proof}

\begin{corollary}
If $f$ is a flat virtual knotoid invariant, then $\mathcal{F}_f (K)$ defined for virtual knotoid by 
\begin{equation}
\mathcal{F}_f  (K) = \sum_{c} w_D(c) f([\overline{D_c}]) - w(D) f([\overline{D}]) \label{eqn:f}
\end{equation} 
where $D$ is a diagram of $K$, is a virtual knotoid invariant.
\end{corollary}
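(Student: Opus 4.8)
The plan is to recognize $\mathcal{F}_f$ as nothing more than the composition of the invariant $\mathcal{F}$, already proved to be a virtual knotoid invariant in Theorem~\ref{th4.2}, with the $\mathbb{Z}$-linear extension of the flat virtual knotoid invariant $f$. Write $\mathcal{M}$ for the free $\mathbb{Z}$-module generated by the flat-equivalence classes of non-oriented flat virtual knotoids, so that $\mathcal{F}(D) \in \mathcal{M}$ for every virtual knotoid diagram $D$. Assuming $f$ takes values in an abelian group $A$, the first step is to extend $f$ to a homomorphism $\tilde{f} \colon \mathcal{M} \to A$ by setting $\tilde{f}\bigl( \sum_i n_i [F_i] \bigr) = \sum_i n_i \, f([F_i])$ on generators $[F_i]$ and extending by linearity.

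The second step is to verify that $\tilde{f}$ is well defined: a generator of $\mathcal{M}$ is a flat-equivalence class $[F]$, and since $f$ is by hypothesis a flat virtual knotoid invariant, the value $f([F])$ depends only on this class and not on the chosen representative diagram. Hence $\tilde{f}$ is a genuine, unambiguous $\mathbb{Z}$-module homomorphism. Comparing the defining formula~(\ref{eqn:f}) with the definition of $\mathcal{F}(D)$ in Definition~\ref{def4.2}, one reads off directly that
\[
\mathcal{F}_f(K) = \sum_c w_D(c)\, f([\overline{D_c}]) - w(D)\, f([\overline{D}]) = \tilde{f}\bigl( \mathcal{F}(D) \bigr)
\]
for any diagram $D$ of $K$.

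The final step is to conclude invariance. If $D$ and $D'$ are two diagrams of the same virtual knotoid $K$, then $\mathcal{F}(D) = \mathcal{F}(D')$ in $\mathcal{M}$ by Theorem~\ref{th4.2}; applying the well-defined map $\tilde{f}$ to this equality gives that $\mathcal{F}_f(K)$ computed from $D$ equals $\mathcal{F}_f(K)$ computed from $D'$. Thus $\mathcal{F}_f(K)$ is independent of the chosen diagram, and so it is a virtual knotoid invariant.

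I expect no serious obstacle here: the entire geometric content, namely invariance under $\Omega_1$, $\Omega_2$, $\Omega_3$, $\Omega_3^m$ and the virtual moves, was already absorbed into Theorem~\ref{th4.2}, and the present statement is a purely formal consequence of functoriality, that post-composing an $\mathcal{M}$-valued invariant with a homomorphism induced by a flat invariant again yields an invariant. The only point requiring (routine) care is confirming that $\tilde{f}$ is well defined on equivalence classes, which is immediate from $f$ being a flat invariant; so the argument is a short formal deduction rather than a fresh verification of the Reidemeister moves.
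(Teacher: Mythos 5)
Your proof is correct, but it takes a genuinely different route from the paper. The paper's own proof simply re-runs the move-by-move verification of Theorem~\ref{th4.2}: it checks invariance under $\Omega_1$, $\Omega_2$, $\Omega_3$, $\Omega_3^m$ again, replacing each class $[\overline{D}]$, $[\overline{D_c}]$, $[\overline{E}]$, $[\overline{E_e}]$ by its value $f([\overline{D}])$, $f([\overline{D_c}])$, etc., and observing that every equality of classes used there becomes an equality of $f$-values. You instead use Theorem~\ref{th4.2} as a black box: you factor $\mathcal{F}_f = \tilde{f}\circ\mathcal{F}$, where $\tilde{f}\colon\mathcal{M}\to A$ is the $\mathbb{Z}$-linear extension of $f$ to the free module $\mathcal{M}$ in which $\mathcal{F}$ takes its values, and then invariance of $\mathcal{F}_f$ follows from invariance of $\mathcal{F}$ by applying the well-defined homomorphism $\tilde{f}$. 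Your argument is shorter and more modular (it shows, once and for all, that post-composing any $\mathcal{M}$-valued invariant with a homomorphism induced by a flat invariant yields an invariant), and the two points you flag -- that $f$ must land in an abelian group for the formula to parse, and that $\tilde{f}$ is well defined on generators because $f$ is constant on flat-equivalence classes -- are exactly the only things that need checking; in particular, linearity handles the case where distinct crossings yield the same flat class, since their coefficients collect consistently on both sides of $\tilde{f}(\mathcal{F}(D)) = \mathcal{F}_f(D)$. What the paper's approach buys in exchange is self-containedness: it never needs to formalize the target as a free module or introduce the linear extension, at the cost of repeating the geometric verification.
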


\begin{proof} To prove invariance of $\mathcal{F}_f (K)$ we should to check its invariance under moves $\Omega_1$, $\Omega_2$, $\Omega_3$ and $\Omega^m_3$. The arguments follow the proof of Theorem~\ref{th3.1} by replacing $[\overline{D}]$ and $[\overline{E}]$ with $f([\overline{D}])$ and  $f (]\overline{E}])$ as well as $[\overline{D_c}]$ and $[\overline{E_e}]$ with  $f([\overline{D_c}])$ and $f([\overline{E_e}])$. 
\end{proof} 

The following example shows that invariant $\mathcal{F}$ is non-trivial.

\begin{example}\label{exp4.1} {\rm 
Consider a virtual knotoid $K$, presented by a diagram $D$ in Fig.~\ref{fig19}. We will show that  $\mathcal{F} ( K ) \neq 0$. $D$ has four classical crossings $c_1$, $c_2$, $c_3$, $c_4$, and one virtual crossing. From Fig.~\ref{fig19} we have $w_D(c_1) = w_D(c_2) = w_D(c_3) = -1$, $w_D(c_4) =1$ and hence $w(D)=-2$. 
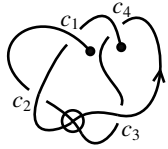
\begin{figure}[htbp]
\begin{center}
\tikzset{every picture/.style={line width=1.0pt}}  
\begin{tikzpicture}[x=0.75pt,y=0.75pt,yscale=-1.0,xscale=1.0]
\draw    (212.39,63.98) .. controls (184.39,42.48) and (218.34,15.95) .. (243.28,44.49) ;
\draw    (221.06,69.65) .. controls (232.06,76.15) and (235.84,80.45) .. (238.93,86.18) ; 
\draw    (258.89,71.81) .. controls (263.06,63.13) and (238.73,53.15) .. (252.56,37.15) ;
\draw    (259.56,30.65) .. controls (281.91,12.09) and (288.71,83.96) .. (257.16,76.69) ;
\draw    (257.06,80.81) .. controls (255.89,85.03) and (246.27,97.73) .. (238.93,86.18) ;
\draw    (257.16,76.69) .. controls (250.27,75.29) and (241.2,73.33) .. (233.6,80.62) ;
\draw    (233.6,80.62) .. controls (232.2,82.35) and (208.19,93.47) .. (217.25,66.11) ;
\draw    (217.25,66.11) .. controls (221.25,56.55) and (226.8,45.87) .. (231.56,41.15) ;
\draw    (238.06,33.31) .. controls (255.07,15.87) and (257.16,37.75) .. (258.88,42.42) ;
\draw  [fill={rgb, 255:red, 17; green, 16; blue, 16 }  ,fill opacity=1 ] (257.01,42.43) .. controls (257.02,43.46) and (257.86,44.29) .. (258.89,44.28) .. controls (259.93,44.27) and (260.76,43.43) .. (260.75,42.4) .. controls (260.74,41.37) and (259.89,40.54) .. (258.86,40.55) .. controls (257.83,40.56) and (257,41.4) .. (257.01,42.43) -- cycle ;
\draw  [fill={rgb, 255:red, 17; green, 16; blue, 16 }  ,fill opacity=1 ] (241.41,44.5) .. controls (241.42,45.53) and (242.26,46.36) .. (243.29,46.35) .. controls (244.32,46.35) and (245.15,45.5) .. (245.14,44.47) .. controls (245.14,43.44) and (244.29,42.61) .. (243.26,42.62) .. controls (242.23,42.63) and (241.4,43.47) .. (241.41,44.5) -- cycle ;
\draw   (275.01,59.58) -- (278.51,54.3) -- (280.56,60.29) ;
\draw   (229,79.7) .. controls (229,76.71) and (231.42,74.29) .. (234.42,74.29) .. controls (237.41,74.29) and (239.83,76.71) .. (239.83,79.7) .. controls (239.83,82.7) and (237.41,85.12) .. (234.42,85.12) .. controls (231.42,85.12) and (229,82.7) .. (229,79.7) -- cycle ;
\draw (252.83,18.31) node [anchor=north west][inner sep=0.75pt]  [font=\footnotesize]  {$c_{4}$};
\draw (257.35,80.3) node [anchor=north west][inner sep=0.75pt]  [font=\footnotesize]  {$c_{3}$};
\draw (226.62,24.69) node [anchor=north west][inner sep=0.75pt]  [font=\footnotesize]  {$c_{1}$};
\draw (202.43,65.88) node [anchor=north west][inner sep=0.75pt]  [font=\footnotesize]  {$c_{2}$};
\end{tikzpicture}
\caption{A virtual knotoid diagram $D$.\label{fig19}}
\end{center}
\end{figure} 

The flat virtual knotoid diagram $\overline{D}$ obtained by flattening of $D$ is presented in Fig.~\ref{fig20}. 
Let us denote a trivial knotoid diagram by $D_0$ and remark that $\overline{D_0} = D_0$. It easy to see, that after moves $f\Omega_2$, $\Omega_v$ and $f\Omega_2$ the diagram $\overline{D}$ can be reduces to $\overline{D_0}$, thus $[\overline{D}] = [\overline{D_0}]$.   
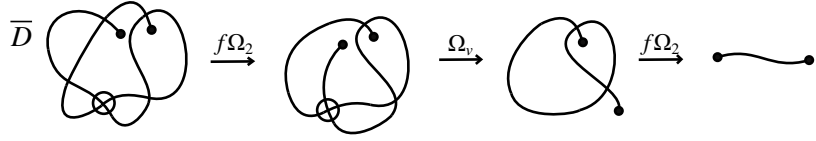
\begin{figure}[htbp]
\begin{center} 
\tikzset{every picture/.style={line width=1.0pt}}  
\begin{tikzpicture}[x=0.75pt,y=0.75pt,yscale=-1.0,xscale=1.0]
\draw    (208.74,137.09) .. controls (205.16,116.62) and (229.52,109.81) .. (245.28,129.15) ;
\draw    (208.74,137.09) .. controls (212.93,159.29) and (230.04,151.51) .. (240.93,170.84) ; 
\draw    (259.35,163.97) .. controls (263.02,150.71) and (238.66,134.86) .. (256.51,120.61) ;
\draw    (256.51,120.61) .. controls (278.86,102.05) and (290.71,168.62) .. (259.16,161.35) ;
\draw    (259.35,163.97) .. controls (258.18,168.18) and (248.27,182.4) .. (240.93,170.84) ;
\draw    (259.16,161.35) .. controls (252.27,159.96) and (243.36,158.03) .. (235.6,165.29) ;
\draw    (235.6,165.29) .. controls (234.2,167.02) and (208.76,182.66) .. (217.82,155.29) ;
\draw    (217.82,155.29) .. controls (221.82,145.73) and (233.65,125.07) .. (238.41,120.34) ;
\draw    (238.41,120.34) .. controls (255.42,102.9) and (259.16,122.42) .. (260.88,127.08) ;
\draw  [fill={rgb, 255:red, 17; green, 16; blue, 16 }  ,fill opacity=1 ] (259.01,127.1) .. controls (259.02,128.13) and (259.86,128.96) .. (260.89,128.95) .. controls (261.93,128.94) and (262.76,128.1) .. (262.75,127.07) .. controls (262.74,126.04) and (261.89,125.21) .. (260.86,125.22) .. controls (259.83,125.22) and (259,126.07) .. (259.01,127.1) -- cycle ;
\draw  [fill={rgb, 255:red, 17; green, 16; blue, 16 }  ,fill opacity=1 ] (243.41,129.17) .. controls (243.42,130.2) and (244.26,131.03) .. (245.29,131.02) .. controls (246.32,131.01) and (247.15,130.17) .. (247.14,129.14) .. controls (247.14,128.11) and (246.29,127.28) .. (245.26,127.29) .. controls (244.23,127.3) and (243.4,128.14) .. (243.41,129.17) -- cycle ;
\draw    (290.65,143.17) -- (311.42,143.03) ;
\draw   (309.31,141.05) -- (311.99,142.93) -- (309.35,144.89) ;
\draw    (357.29,134.55) .. controls (350.45,140.46) and (342.04,155.04) .. (352.93,174.38) ;
\draw    (381.95,170.17) .. controls (394.13,154.17) and (350.66,138.4) .. (368.51,124.14) ;
\draw    (368.51,124.14) .. controls (390.86,105.58) and (402.71,172.16) .. (371.16,164.89) ;
\draw    (381.95,170.17) .. controls (379.77,173.44) and (360.27,185.93) .. (352.93,174.38) ;
\draw    (371.16,164.89) .. controls (364.27,163.49) and (357.23,163.73) .. (347.6,168.82) ;
\draw    (347.6,168.82) .. controls (337.45,173.76) and (332.34,167.06) .. (329.82,158.82) ;
\draw    (329.82,158.82) .. controls (323.45,135.85) and (341.9,127.28) .. (350.41,123.88) ;
\draw    (350.41,123.88) .. controls (365.68,118.17) and (371.16,125.95) .. (372.88,130.62) ;
\draw  [fill={rgb, 255:red, 17; green, 16; blue, 16 }  ,fill opacity=1 ] (371.01,130.63) .. controls (371.02,131.66) and (371.86,132.49) .. (372.89,132.48) .. controls (373.93,132.47) and (374.76,131.63) .. (374.75,130.6) .. controls (374.74,129.57) and (373.89,128.74) .. (372.86,128.75) .. controls (371.83,128.76) and (371,129.6) .. (371.01,130.63) -- cycle ;
\draw  [fill={rgb, 255:red, 17; green, 16; blue, 16 }  ,fill opacity=1 ] (355.42,134.57) .. controls (355.43,135.6) and (356.27,136.43) .. (357.31,136.42) .. controls (358.34,136.41) and (359.17,135.57) .. (359.16,134.54) .. controls (359.15,133.51) and (358.31,132.68) .. (357.28,132.69) .. controls (356.24,132.7) and (355.41,133.54) .. (355.42,134.57) -- cycle ;
\draw   (344.36,167.6) .. controls (344.36,164.61) and (346.78,162.18) .. (349.78,162.18) .. controls (352.77,162.18) and (355.19,164.61) .. (355.19,167.6) .. controls (355.19,170.59) and (352.77,173.02) .. (349.78,173.02) .. controls (346.78,173.02) and (344.36,170.59) .. (344.36,167.6) -- cycle ;
\draw    (496.5,167.98) .. controls (500.31,155.07) and (456.16,141.15) .. (474.01,126.89) ;
\draw    (474.01,126.89) .. controls (496.36,108.33) and (504.65,160.38) .. (476.66,167.64) ;
\draw    (476.66,167.64) .. controls (475.26,169.37) and (446.79,173.98) .. (442.25,159.18) ;
\draw    (442.25,159.18) .. controls (435.45,143.18) and (452.65,126.78) .. (457.45,123.98) ;
\draw    (457.45,123.98) .. controls (474.25,115.58) and (476.66,128.7) .. (478.38,133.37) ;
\draw  [fill={rgb, 255:red, 17; green, 16; blue, 16 }  ,fill opacity=1 ] (476.51,133.38) .. controls (476.52,134.41) and (477.36,135.24) .. (478.39,135.23) .. controls (479.43,135.22) and (480.26,134.38) .. (480.25,133.35) .. controls (480.24,132.32) and (479.39,131.49) .. (478.36,131.5) .. controls (477.33,131.51) and (476.5,132.35) .. (476.51,133.38) -- cycle ;
\draw  [fill={rgb, 255:red, 17; green, 16; blue, 16 }  ,fill opacity=1 ] (494.63,168) .. controls (494.64,169.03) and (495.48,169.86) .. (496.51,169.85) .. controls (497.54,169.84) and (498.37,169) .. (498.36,167.97) .. controls (498.36,166.94) and (497.51,166.11) .. (496.48,166.12) .. controls (495.45,166.13) and (494.62,166.97) .. (494.63,168) -- cycle ;
\draw    (405.9,142.17) -- (426.67,142.03) ;
\draw   (424.36,139.85) -- (427.04,142.03) -- (424.4,144.29) ;
\draw    (506.55,142.17) -- (527.32,142.03) ;
\draw   (525.81,139.84) -- (528.05,141.93) -- (525.85,144.09) ;
\draw    (592.5,142.37) .. controls (572.5,148.21) and (566.35,134.67) .. (546.31,140.87) ;
\draw  [fill={rgb, 255:red, 17; green, 16; blue, 16 }  ,fill opacity=1 ] (544.45,140.89) .. controls (544.45,141.92) and (545.3,142.75) .. (546.33,142.74) .. controls (547.36,142.73) and (548.19,141.89) .. (548.18,140.86) .. controls (548.17,139.83) and (547.33,139) .. (546.3,139.01) .. controls (545.27,139.02) and (544.44,139.86) .. (544.45,140.89) -- cycle ;
\draw  [fill={rgb, 255:red, 17; green, 16; blue, 16 }  ,fill opacity=1 ] (590.63,142.38) .. controls (590.64,143.41) and (591.49,144.24) .. (592.52,144.23) .. controls (593.55,144.22) and (594.38,143.38) .. (594.37,142.35) .. controls (594.36,141.32) and (593.52,140.49) .. (592.49,140.5) .. controls (591.46,140.51) and (590.63,141.35) .. (590.63,142.38) -- cycle ;
\draw   (231.33,164.04) .. controls (231.33,161.04) and (233.76,158.62) .. (236.75,158.62) .. controls (239.74,158.62) and (242.17,161.04) .. (242.17,164.04) .. controls (242.17,167.03) and (239.74,169.46) .. (236.75,169.46) .. controls (233.76,169.46) and (231.33,167.03) .. (231.33,164.04) -- cycle ; 
\draw (290.59,127.52) node [anchor=north west][inner sep=0.75pt]  [font=\footnotesize]  {$f\Omega_{2}$};
\draw (408.84,128.02) node [anchor=north west][inner sep=0.75pt]  [font=\footnotesize]  {$\Omega_{v}$};
\draw (505.59,127.52) node [anchor=north west][inner sep=0.75pt]  [font=\footnotesize]  {$f\Omega_{2}$};
\draw (188,120) node [anchor=north west][inner sep=0.75pt]  [font=\normalsize]  {$\overline{D}$};			
\end{tikzpicture}
\caption{Reducing the flat virtual knotoid  $\overline{D}$ to trivial knotoid diagram $\overline{D}_0$.} \label{fig20}
\end{center}
\end{figure}

 It can be seen from Fig.~\ref{fig21} and Fig.~\ref{fig22} that  $[\overline{D_{c_1}}] =  [\overline{D_{c_2}}]$ and $[\overline{D_{c_3}}] = [\overline{D_{c_4}}] = [\overline{D_0}]$. Therefore, 
\begin{eqnarray*}
	\mathcal{F} ( K )& = &\sum_{c} w_{D}(c) \, [\overline{D_c}] - w (D) \, [\overline{D}]   
	=  - [\overline{D_{c_{1}}}] - [\overline{D_{c_{2}}}] - [\overline{D_{c_{3}}}] + [\overline{D_{c_{4}}}] - \left(-2 \, [\overline{D_0}]  \right) \\ 
	& = & -[\overline{D_{c_{1}}}] - [\overline{D_{c_{1}}}]  -  [\overline{D_0}] + [\overline{D_0}] + 2 [\overline{D_0}]   
	=  2 ([\overline{D_0}] - [\overline{D_{c_{1}}}]).   
\end{eqnarray*}

\begin{figure}[htbp]
\begin{center}
\tikzset{every picture/.style={line width=1.0pt}}  
\begin{tikzpicture}[x=0.75pt,y=0.75pt,yscale=-1.0,xscale=1.0]
\draw    (376.31,235.64) -- (404.02,235.71) ;
\draw   (402.39,233.9) -- (404.52,235.76) -- (402.42,237.71) ;
\draw    (190.74,229.75) .. controls (191.95,212.35) and (215.86,208.17) .. (222.1,207.44) ;
\draw    (190.74,229.75) .. controls (192.12,245.37) and (212.04,244.18) .. (222.93,263.51) ;
\draw    (241.35,256.64) .. controls (245.02,243.38) and (220.66,227.53) .. (238.51,213.28) ;
\draw    (238.51,213.28) .. controls (260.86,194.72) and (272.71,261.29) .. (241.16,254.02) ;
\draw    (241.35,256.64) .. controls (240.18,260.85) and (230.27,275.07) .. (222.93,263.51) ;
\draw    (241.16,254.02) .. controls (234.27,252.62) and (225.36,250.69) .. (217.6,257.96) ;
\draw    (217.6,257.96) .. controls (216.2,259.68) and (190.76,275.32) .. (199.82,247.96) ;
\draw    (199.82,247.96) .. controls (204.12,227.03) and (220.99,214.35) .. (227.28,221.82) ;
\draw    (222.1,207.44) .. controls (235.43,206.33) and (244.45,213.35) .. (242.88,219.75) ;
\draw  [fill={rgb, 255:red, 17; green, 16; blue, 16 }  ,fill opacity=1 ] (241.01,219.76) .. controls (241.02,220.79) and (241.86,221.62) .. (242.89,221.61) .. controls (243.93,221.61) and (244.76,220.76) .. (244.75,219.73) .. controls (244.74,218.7) and (243.89,217.87) .. (242.86,217.88) .. controls (241.83,217.89) and (241,218.73) .. (241.01,219.76) -- cycle ;
\draw  [fill={rgb, 255:red, 17; green, 16; blue, 16 }  ,fill opacity=1 ] (225.41,221.84) .. controls (225.42,222.87) and (226.26,223.7) .. (227.29,223.69) .. controls (228.32,223.68) and (229.15,222.84) .. (229.14,221.81) .. controls (229.14,220.78) and (228.29,219.95) .. (227.26,219.96) .. controls (226.23,219.96) and (225.4,220.81) .. (225.41,221.84) -- cycle ;
\draw   (213.33,256.7) .. controls (213.33,253.71) and (215.76,251.29) .. (218.75,251.29) .. controls (221.74,251.29) and (224.17,253.71) .. (224.17,256.7) .. controls (224.17,259.7) and (221.74,262.12) .. (218.75,262.12) .. controls (215.76,262.12) and (213.33,259.7) .. (213.33,256.7) -- cycle ;
\draw    (290.12,247.09) .. controls (287.12,218.09) and (303.85,203.81) .. (319.61,223.15) ;
\draw    (301.45,240.76) .. controls (303.45,252.76) and (311.79,256.43) .. (315.27,264.84) ;
\draw    (333.69,257.97) .. controls (337.36,244.71) and (312.99,228.86) .. (330.84,214.61) ;
\draw    (330.84,214.61) .. controls (353.19,196.05) and (365.04,262.62) .. (333.49,255.35) ;
\draw    (333.69,257.97) .. controls (332.52,262.18) and (322.6,276.4) .. (315.27,264.84) ;
\draw    (333.49,255.35) .. controls (326.6,253.96) and (317.69,252.03) .. (309.93,259.29) ;
\draw    (309.93,259.29) .. controls (308.53,261.02) and (292.45,269.09) .. (290.12,247.09) ;
\draw    (301.45,240.76) .. controls (299.85,225.42) and (307.99,219.07) .. (312.74,214.34) ;
\draw    (312.74,214.34) .. controls (329.75,196.9) and (333.49,216.42) .. (335.21,221.08) ;
\draw  [fill={rgb, 255:red, 17; green, 16; blue, 16 }  ,fill opacity=1 ] (333.34,221.1) .. controls (333.35,222.13) and (334.19,222.96) .. (335.23,222.95) .. controls (336.26,222.94) and (337.09,222.1) .. (337.08,221.07) .. controls (337.07,220.04) and (336.23,219.21) .. (335.2,219.22) .. controls (334.16,219.22) and (333.33,220.07) .. (333.34,221.1) -- cycle ;
\draw  [fill={rgb, 255:red, 17; green, 16; blue, 16 }  ,fill opacity=1 ] (317.74,223.17) .. controls (317.75,224.2) and (318.59,225.03) .. (319.62,225.02) .. controls (320.66,225.01) and (321.49,224.17) .. (321.48,223.14) .. controls (321.47,222.11) and (320.63,221.28) .. (319.59,221.29) .. controls (318.56,221.3) and (317.73,222.14) .. (317.74,223.17) -- cycle ;
\draw   (305.66,258.04) .. controls (305.66,255.04) and (308.09,252.62) .. (311.08,252.62) .. controls (314.08,252.62) and (316.5,255.04) .. (316.5,258.04) .. controls (316.5,261.03) and (314.08,263.46) .. (311.08,263.46) .. controls (308.09,263.46) and (305.66,261.03) .. (305.66,258.04) -- cycle ;
\draw    (430.74,231.25) .. controls (431.95,213.85) and (455.86,209.67) .. (462.1,208.94) ;
\draw    (430.74,231.25) .. controls (432.12,246.87) and (452.04,245.68) .. (462.93,265.01) ;
\draw    (481.35,258.14) .. controls (485.02,244.88) and (460.66,229.03) .. (478.51,214.78) ;
\draw    (478.51,214.78) .. controls (500.86,196.22) and (512.71,262.79) .. (481.16,255.52) ;
\draw    (481.35,258.14) .. controls (480.18,262.35) and (470.27,276.57) .. (462.93,265.01) ;
\draw    (481.16,255.52) .. controls (474.27,254.12) and (465.36,252.19) .. (457.6,259.46) ;
\draw    (457.6,259.46) .. controls (456.2,261.18) and (430.76,276.82) .. (439.82,249.46) ;
\draw    (439.82,249.46) .. controls (444.12,228.53) and (460.99,215.85) .. (467.28,223.32) ;
\draw    (462.1,208.94) .. controls (475.43,207.83) and (484.45,214.85) .. (482.88,221.25) ;
\draw  [fill={rgb, 255:red, 17; green, 16; blue, 16 }  ,fill opacity=1 ] (481.01,221.26) .. controls (481.02,222.29) and (481.86,223.12) .. (482.89,223.11) .. controls (483.93,223.11) and (484.76,222.26) .. (484.75,221.23) .. controls (484.74,220.2) and (483.89,219.37) .. (482.86,219.38) .. controls (481.83,219.39) and (481,220.23) .. (481.01,221.26) -- cycle ;
\draw  [fill={rgb, 255:red, 17; green, 16; blue, 16 }  ,fill opacity=1 ] (465.41,223.34) .. controls (465.42,224.37) and (466.26,225.2) .. (467.29,225.19) .. controls (468.32,225.18) and (469.15,224.34) .. (469.14,223.31) .. controls (469.14,222.28) and (468.29,221.45) .. (467.26,221.46) .. controls (466.23,221.46) and (465.4,222.31) .. (465.41,223.34) -- cycle ;
\draw   (453.33,258.2) .. controls (453.33,255.21) and (455.76,252.79) .. (458.75,252.79) .. controls (461.74,252.79) and (464.17,255.21) .. (464.17,258.2) .. controls (464.17,261.2) and (461.74,263.62) .. (458.75,263.62) .. controls (455.76,263.62) and (453.33,261.2) .. (453.33,258.2) -- cycle ;
\draw (376.05,222.53) node [anchor=north west][inner sep=0.75pt]  [font=\scriptsize]  {$\text{isotopy}$};
\draw (215.8,275.3) node [anchor=north west][inner sep=0.75pt]  [font=\normalsize]  {$\overline{D}_{c_{1}}$};
\draw (326.5,275.3) node [anchor=north west][inner sep=0.75pt]  [font=\normalsize]  {$\overline{D}_{c_{2}}$};
\draw (423.71,275.3) node [anchor=north west][inner sep=0.75pt]  [font=\normalsize]  {$\overline{D}_{c_{1}}$};		
\end{tikzpicture}
\caption{Flat virtual knotoid diagrams $\overline{D_{c_1}}$ and $\overline{D_{c_2}}$.} \label{fig21}
\end{center}
\end{figure}
	
\begin{figure}[htbp]
\begin{center}
\tikzset{every picture/.style={line width=1.0pt}}  
\begin{tikzpicture}[x=0.75pt,y=0.75pt,yscale=-1.0,xscale=1.0]
\draw    (196.74,347.09) .. controls (193.16,326.62) and (217.52,319.81) .. (233.28,339.15) ;
\draw    (196.74,347.09) .. controls (200.93,369.29) and (212.45,357.13) .. (228.93,380.84) ;
\draw    (224.4,374.42) .. controls (246.95,355.13) and (229.95,346.13) .. (244.51,330.61) ;
\draw    (244.51,330.61) .. controls (266.86,312.05) and (267.7,356.13) .. (259.95,368.13) ;
\draw    (259.95,368.13) .. controls (256.7,374.63) and (238.2,392.23) .. (228.93,380.84) ;
\draw    (224.4,374.42) .. controls (223,376.15) and (196.76,392.66) .. (205.82,365.29) ;
\draw    (205.82,365.29) .. controls (209.82,355.73) and (222.13,334.51) .. (226.89,329.79) ;
\draw    (226.41,330.34) .. controls (243.42,312.9) and (247.16,332.42) .. (248.88,337.08) ;
\draw  [fill={rgb, 255:red, 17; green, 16; blue, 16 }  ,fill opacity=1 ] (247.01,337.1) .. controls (247.02,338.13) and (247.86,338.96) .. (248.89,338.95) .. controls (249.93,338.94) and (250.76,338.1) .. (250.75,337.07) .. controls (250.74,336.04) and (249.89,335.21) .. (248.86,335.22) .. controls (247.83,335.22) and (247,336.07) .. (247.01,337.1) -- cycle ;
\draw  [fill={rgb, 255:red, 17; green, 16; blue, 16 }  ,fill opacity=1 ] (231.41,339.17) .. controls (231.42,340.2) and (232.26,341.03) .. (233.29,341.02) .. controls (234.32,341.01) and (235.15,340.17) .. (235.14,339.14) .. controls (235.14,338.11) and (234.29,337.28) .. (233.26,337.29) .. controls (232.23,337.3) and (231.4,338.14) .. (231.41,339.17) -- cycle ;
\draw    (200.74,444.95) .. controls (197.16,424.49) and (221.52,417.67) .. (237.28,437.02) ;
\draw    (200.74,444.95) .. controls (204.93,467.16) and (222.04,459.38) .. (232.93,478.71) ;
\draw    (252.19,467.44) .. controls (248.03,452.46) and (256.03,440.18) .. (248.51,428.48) ;
\draw    (257.13,444.2) .. controls (261.93,403) and (285.11,475.08) .. (251.16,469.22) ;
\draw    (252.19,467.44) .. controls (254.83,476.53) and (240.27,490.27) .. (232.93,478.71) ;
\draw    (251.16,469.22) .. controls (244.27,467.82) and (235.2,465.86) .. (227.6,473.16) ;
\draw    (227.6,473.16) .. controls (226.2,474.88) and (200.76,490.52) .. (209.82,463.16) ;
\draw    (209.82,463.16) .. controls (213.82,453.6) and (225.65,432.94) .. (230.41,428.21) ;
\draw    (230.41,428.21) .. controls (235.86,421.1) and (244.53,421.77) .. (248.51,428.48) ;
\draw  [fill={rgb, 255:red, 17; green, 16; blue, 16 }  ,fill opacity=1 ] (255.26,444.21) .. controls (255.27,445.24) and (256.11,446.07) .. (257.14,446.06) .. controls (258.17,446.05) and (259,445.21) .. (259,444.18) .. controls (258.99,443.15) and (258.14,442.32) .. (257.11,442.33) .. controls (256.08,442.34) and (255.25,443.18) .. (255.26,444.21) -- cycle ;
\draw  [fill={rgb, 255:red, 17; green, 16; blue, 16 }  ,fill opacity=1 ] (235.41,437.04) .. controls (235.42,438.07) and (236.26,438.9) .. (237.29,438.89) .. controls (238.32,438.88) and (239.15,438.04) .. (239.14,437.01) .. controls (239.14,435.98) and (238.29,435.15) .. (237.26,435.16) .. controls (236.23,435.16) and (235.4,436.01) .. (235.41,437.04) -- cycle ;
\draw   (222.83,472.06) .. controls (222.83,469.07) and (225.26,466.64) .. (228.25,466.64) .. controls (231.24,466.64) and (233.67,469.07) .. (233.67,472.06) .. controls (233.67,475.05) and (231.24,477.48) .. (228.25,477.48) .. controls (225.26,477.48) and (222.83,475.05) .. (222.83,472.06) -- cycle ;
\draw    (290.22,350.05) -- (310.98,349.9) ;
\draw    (290.48,450.15) -- (311.25,450.01) ;
\draw    (362.28,339.49) .. controls (345.45,344.09) and (341.45,357.46) .. (357.93,381.18) ;
\draw    (353.4,374.76) .. controls (373.77,357.98) and (358.95,346.46) .. (373.51,330.94) ;
\draw    (373.51,330.94) .. controls (395.86,312.38) and (396.7,356.46) .. (388.95,368.46) ;
\draw    (388.95,368.46) .. controls (385.7,374.96) and (367.2,392.56) .. (357.93,381.18) ;
\draw    (353.4,374.76) .. controls (348.61,377.81) and (336.12,383.09) .. (332.12,363.43) ;
\draw    (332.12,363.43) .. controls (329.79,345.76) and (338.02,328.07) .. (354.12,324.76) ;
\draw    (354.12,324.76) .. controls (373.45,321.43) and (376.16,332.75) .. (377.88,337.42) ;
\draw  [fill={rgb, 255:red, 17; green, 16; blue, 16 }  ,fill opacity=1 ] (376.01,337.43) .. controls (376.02,338.46) and (376.86,339.29) .. (377.89,339.28) .. controls (378.93,339.27) and (379.76,338.43) .. (379.75,337.4) .. controls (379.74,336.37) and (378.89,335.54) .. (377.86,335.55) .. controls (376.83,335.56) and (376,336.4) .. (376.01,337.43) -- cycle ;
\draw  [fill={rgb, 255:red, 17; green, 16; blue, 16 }  ,fill opacity=1 ] (360.41,339.5) .. controls (360.42,340.53) and (361.26,341.36) .. (362.29,341.35) .. controls (363.32,341.35) and (364.15,340.5) .. (364.14,339.47) .. controls (364.14,338.44) and (363.29,337.61) .. (362.26,337.62) .. controls (361.23,337.63) and (360.4,338.47) .. (360.41,339.5) -- cycle ;
\draw   (347.98,374.76) .. controls (347.98,371.76) and (350.41,369.34) .. (353.4,369.34) .. controls (356.39,369.34) and (358.82,371.76) .. (358.82,374.76) .. controls (358.82,377.75) and (356.39,380.17) .. (353.4,380.17) .. controls (350.41,380.17) and (347.98,377.75) .. (347.98,374.76) -- cycle ;
\draw    (410.9,350.24) -- (431.67,350.09) ;
\draw    (461.74,375.09) .. controls (474.94,365.65) and (467.28,346.79) .. (481.84,331.28) ;
\draw    (481.84,331.28) .. controls (504.19,312.72) and (505.03,356.79) .. (497.28,368.79) ;
\draw    (497.28,368.79) .. controls (494.03,375.29) and (475.53,392.89) .. (466.27,381.51) ;
\draw    (461.74,375.09) .. controls (458.94,377.31) and (444.45,383.43) .. (440.45,363.76) ;
\draw    (440.45,363.76) .. controls (438.12,346.09) and (446.36,328.4) .. (462.45,325.09) ;
\draw    (462.45,325.09) .. controls (481.79,321.76) and (484.49,333.08) .. (486.21,337.75) ;
\draw  [fill={rgb, 255:red, 17; green, 16; blue, 16 }  ,fill opacity=1 ] (484.34,337.76) .. controls (484.35,338.79) and (485.19,339.62) .. (486.23,339.61) .. controls (487.26,339.61) and (488.09,338.76) .. (488.08,337.73) .. controls (488.07,336.7) and (487.23,335.87) .. (486.2,335.88) .. controls (485.16,335.89) and (484.33,336.73) .. (484.34,337.76) -- cycle ;
\draw  [fill={rgb, 255:red, 17; green, 16; blue, 16 }  ,fill opacity=1 ] (464.4,381.53) .. controls (464.41,382.56) and (465.25,383.39) .. (466.28,383.38) .. controls (467.31,383.37) and (468.14,382.53) .. (468.14,381.5) .. controls (468.13,380.47) and (467.28,379.64) .. (466.25,379.65) .. controls (465.22,379.65) and (464.39,380.5) .. (464.4,381.53) -- cycle ;
\draw    (509.88,350.2) -- (530.65,350.06) ;
\draw    (595.84,350.39) .. controls (575.84,356.24) and (569.68,342.7) .. (549.65,348.9) ;
\draw  [fill={rgb, 255:red, 17; green, 16; blue, 16 }  ,fill opacity=1 ] (547.78,348.92) .. controls (547.79,349.95) and (548.63,350.78) .. (549.66,350.77) .. controls (550.7,350.76) and (551.52,349.92) .. (551.52,348.89) .. controls (551.51,347.86) and (550.66,347.03) .. (549.63,347.03) .. controls (548.6,347.04) and (547.77,347.89) .. (547.78,348.92) -- cycle ;
\draw  [fill={rgb, 255:red, 17; green, 16; blue, 16 }  ,fill opacity=1 ] (593.97,350.41) .. controls (593.98,351.44) and (594.82,352.27) .. (595.85,352.26) .. controls (596.88,352.25) and (597.71,351.41) .. (597.7,350.38) .. controls (597.7,349.35) and (596.85,348.52) .. (595.82,348.53) .. controls (594.79,348.53) and (593.96,349.38) .. (593.97,350.41) -- cycle ;
\draw    (355.79,478.71) .. controls (335.9,458.7) and (350.18,432.62) .. (360.13,437.02) ;
\draw    (379.98,444.2) .. controls (384.78,403) and (407.97,475.08) .. (374.02,469.22) ;
\draw    (374.02,469.22) .. controls (367.12,467.82) and (358.06,465.86) .. (350.46,473.16) ;
\draw    (350.46,473.16) .. controls (349.05,474.88) and (327.9,482.62) .. (326.47,460.33) ;
\draw    (326.47,460.33) .. controls (326.18,445.76) and (332.75,430.62) .. (343.9,422.91) ;
\draw    (343.9,422.91) .. controls (351.04,417.19) and (365.11,418.57) .. (371.36,428.48) ;
\draw  [fill={rgb, 255:red, 17; green, 16; blue, 16 }  ,fill opacity=1 ] (378.12,444.21) .. controls (378.12,445.24) and (378.97,446.07) .. (380,446.06) .. controls (381.03,446.05) and (381.86,445.21) .. (381.85,444.18) .. controls (381.84,443.15) and (381,442.32) .. (379.97,442.33) .. controls (378.94,442.34) and (378.11,443.18) .. (378.12,444.21) -- cycle ;
\draw  [fill={rgb, 255:red, 17; green, 16; blue, 16 }  ,fill opacity=1 ] (358.26,437.04) .. controls (358.27,438.07) and (359.12,438.9) .. (360.15,438.89) .. controls (361.18,438.88) and (362.01,438.04) .. (362,437.01) .. controls (361.99,435.98) and (361.15,435.15) .. (360.12,435.16) .. controls (359.09,435.16) and (358.26,436.01) .. (358.26,437.04) -- cycle ;
\draw   (345.69,472.06) .. controls (345.69,469.07) and (348.11,466.64) .. (351.11,466.64) .. controls (354.1,466.64) and (356.53,469.07) .. (356.53,472.06) .. controls (356.53,475.05) and (354.1,477.48) .. (351.11,477.48) .. controls (348.11,477.48) and (345.69,475.05) .. (345.69,472.06) -- cycle ;
\draw    (375.05,467.44) .. controls (370.88,452.46) and (378.88,440.18) .. (371.36,428.48) ;
\draw    (375.05,467.44) .. controls (377.69,476.53) and (363.12,490.27) .. (355.79,478.71) ;
\draw    (478.18,471.09) .. controls (495.32,452.8) and (487.32,431.09) .. (484.79,428.48) ;
\draw    (495.32,456.8) .. controls (483.61,464.52) and (455.04,469.66) .. (447.32,459.94) ;
\draw    (447.32,459.94) .. controls (441.79,453.18) and (446.18,430.62) .. (457.32,422.91) ;
\draw    (457.32,422.91) .. controls (464.18,418.8) and (477.61,418.52) .. (484.79,428.48) ;
\draw  [fill={rgb, 255:red, 17; green, 16; blue, 16 }  ,fill opacity=1 ] (493.46,456.82) .. controls (493.46,457.85) and (494.31,458.68) .. (495.34,458.67) .. controls (496.37,458.66) and (497.2,457.82) .. (497.19,456.79) .. controls (497.18,455.76) and (496.34,454.93) .. (495.31,454.94) .. controls (494.28,454.94) and (493.45,455.79) .. (493.46,456.82) -- cycle ;
\draw  [fill={rgb, 255:red, 17; green, 16; blue, 16 }  ,fill opacity=1 ] (476.31,471.1) .. controls (476.32,472.13) and (477.17,472.96) .. (478.2,472.95) .. controls (479.23,472.95) and (480.06,472.1) .. (480.05,471.07) .. controls (480.04,470.04) and (479.2,469.21) .. (478.17,469.22) .. controls (477.13,469.23) and (476.3,470.07) .. (476.31,471.1) -- cycle ;
\draw    (412.04,450.24) -- (432.81,450.1) ;
\draw    (513.22,450.25) -- (533.98,450.11) ;
\draw    (599.17,450.45) .. controls (579.17,456.29) and (573.02,442.75) .. (552.98,448.95) ;
\draw  [fill={rgb, 255:red, 17; green, 16; blue, 16 }  ,fill opacity=1 ] (551.11,448.97) .. controls (551.12,450) and (551.96,450.83) .. (553,450.82) .. controls (554.03,450.81) and (554.86,449.97) .. (554.85,448.94) .. controls (554.84,447.91) and (554,447.08) .. (552.97,447.09) .. controls (551.93,447.1) and (551.1,447.94) .. (551.11,448.97) -- cycle ;
\draw  [fill={rgb, 255:red, 17; green, 16; blue, 16 }  ,fill opacity=1 ] (597.3,450.46) .. controls (597.31,451.49) and (598.15,452.32) .. (599.18,452.31) .. controls (600.22,452.3) and (601.05,451.46) .. (601.04,450.43) .. controls (601.03,449.4) and (600.19,448.57) .. (599.15,448.58) .. controls (598.12,448.59) and (597.29,449.43) .. (597.3,450.46) -- cycle ;
\draw   (309.5,348.02) -- (311.63,349.89) -- (309.53,351.83) ;
\draw   (430.17,348.12) -- (432.3,349.99) -- (430.2,351.93) ;
\draw   (529.21,348.12) -- (531.34,349.99) -- (529.24,351.93) ;
\draw   (309.72,448.12) -- (311.85,449.99) -- (309.75,451.93) ;
\draw   (431.13,448.34) -- (433.26,450.21) -- (431.16,452.16) ;
\draw   (532.24,448.12) -- (534.37,449.99) -- (532.27,451.93) ;
\draw   (218.98,374.42) .. controls (218.98,371.43) and (221.41,369) .. (224.4,369) .. controls (227.39,369) and (229.82,371.43) .. (229.82,374.42) .. controls (229.82,377.41) and (227.39,379.84) .. (224.4,379.84) .. controls (221.41,379.84) and (218.98,377.41) .. (218.98,374.42) -- cycle ;
\draw (290.15,334.39) node [anchor=north west][inner sep=0.75pt]  [font=\footnotesize]  {$f\Omega_{2}$};
\draw (290.42,434.5) node [anchor=north west][inner sep=0.75pt]  [font=\footnotesize]  {$f\Omega_{2}$};
\draw (414.84,334.39) node [anchor=north west][inner sep=0.75pt]  [font=\footnotesize]  {$\Omega_{v}$};
\draw (509.92,334.39) node [anchor=north west][inner sep=0.75pt]  [font=\footnotesize]  {$f\Omega_{1}$};
\draw (170.88,335.74) node [anchor=north west][inner sep=0.75pt]  [font=\normalsize]  {$\overline{D}_{c_{3}}$};
\draw (175.21,438.13) node [anchor=north west][inner sep=0.75pt]  [font=\normalsize]  {$\overline{D}_{c_{4}}$};
\draw (415.98,434.5) node [anchor=north west][inner sep=0.75pt]  [font=\footnotesize]  {$\Omega_{v}$};
\draw (513.25,434.5) node [anchor=north west][inner sep=0.75pt]  [font=\footnotesize]  {$f\Omega_{1}$};

\end{tikzpicture}
\caption{Flat virtual knotoid diagrams $\overline{D_{c_3}}$ and $\overline{D_{c_4}}$. } \label{fig22}
\end{center}
\end{figure}
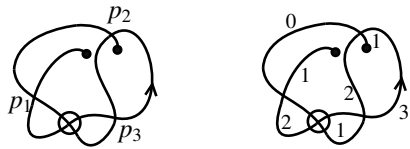
\begin{figure}[htbp]
\begin{center}
\tikzset{every picture/.style={line width=1.0pt}}  
\begin{tikzpicture}[x=0.75pt,y=0.75pt,yscale=-1.0,xscale=1.0]
\draw    (205.31,546.42) .. controls (206.53,529.02) and (230.43,524.84) .. (236.67,524.11) ;
\draw    (205.31,546.42) .. controls (206.69,562.04) and (226.62,560.85) .. (237.5,580.18) ;
\draw    (255.92,573.31) .. controls (259.59,560.05) and (235.23,544.2) .. (253.08,529.95) ;
\draw    (253.08,529.95) .. controls (275.43,511.39) and (287.28,577.96) .. (255.73,570.69) ;
\draw    (255.92,573.31) .. controls (254.75,577.52) and (244.84,591.74) .. (237.5,580.18) ;
\draw    (255.73,570.69) .. controls (248.84,569.29) and (239.93,567.36) .. (232.17,574.63) ;
\draw    (232.17,574.63) .. controls (230.77,576.35) and (205.33,591.99) .. (214.39,564.63) ;
\draw    (214.39,564.63) .. controls (218.69,543.7) and (235.56,531.02) .. (241.85,538.49) ;
\draw    (236.67,524.11) .. controls (250.01,523) and (259.03,530.02) .. (257.45,536.42) ;
\draw  [fill={rgb, 255:red, 17; green, 16; blue, 16 }  ,fill opacity=1 ] (255.58,536.43) .. controls (255.59,537.46) and (256.43,538.29) .. (257.46,538.28) .. controls (258.5,538.28) and (259.33,537.43) .. (259.32,536.4) .. controls (259.31,535.37) and (258.47,534.54) .. (257.43,534.55) .. controls (256.4,534.56) and (255.57,535.4) .. (255.58,536.43) -- cycle ;
\draw  [fill={rgb, 255:red, 17; green, 16; blue, 16 }  ,fill opacity=1 ] (239.98,538.51) .. controls (239.99,539.54) and (240.83,540.37) .. (241.86,540.36) .. controls (242.89,540.35) and (243.72,539.51) .. (243.72,538.48) .. controls (243.71,537.45) and (242.86,536.62) .. (241.83,536.63) .. controls (240.8,536.63) and (239.97,537.48) .. (239.98,538.51) -- cycle ;
\draw   (227.9,573.37) .. controls (227.9,570.38) and (230.33,567.96) .. (233.32,567.96) .. controls (236.31,567.96) and (238.74,570.38) .. (238.74,573.37) .. controls (238.74,576.37) and (236.31,578.79) .. (233.32,578.79) .. controls (230.33,578.79) and (227.9,576.37) .. (227.9,573.37) -- cycle ;
\draw    (330.88,545.57) .. controls (332.1,528.16) and (356,523.98) .. (362.24,523.26) ;
\draw    (330.88,545.57) .. controls (332.26,561.18) and (352.19,559.99) .. (363.08,579.32) ;
\draw    (381.5,572.45) .. controls (385.17,559.19) and (360.8,543.34) .. (378.65,529.09) ;
\draw    (378.65,529.09) .. controls (401,510.53) and (412.85,577.1) .. (381.3,569.83) ;
\draw    (381.5,572.45) .. controls (380.33,576.66) and (370.41,590.88) .. (363.08,579.32) ;
\draw    (381.3,569.83) .. controls (374.41,568.44) and (365.5,566.51) .. (357.74,573.77) ;
\draw    (357.74,573.77) .. controls (356.34,575.5) and (330.9,591.14) .. (339.97,563.77) ;
\draw    (339.97,563.77) .. controls (344.26,542.85) and (361.13,530.17) .. (367.42,537.63) ;
\draw    (362.24,523.26) .. controls (375.58,522.15) and (384.6,529.16) .. (383.02,535.56) ;
\draw  [fill={rgb, 255:red, 17; green, 16; blue, 16 }  ,fill opacity=1 ] (381.15,535.58) .. controls (381.16,536.61) and (382,537.44) .. (383.04,537.43) .. controls (384.07,537.42) and (384.9,536.58) .. (384.89,535.55) .. controls (384.88,534.52) and (384.04,533.69) .. (383.01,533.7) .. controls (381.97,533.7) and (381.14,534.55) .. (381.15,535.58) -- cycle ;
\draw  [fill={rgb, 255:red, 17; green, 16; blue, 16 }  ,fill opacity=1 ] (365.55,537.65) .. controls (365.56,538.68) and (366.4,539.51) .. (367.43,539.5) .. controls (368.47,539.49) and (369.3,538.65) .. (369.29,537.62) .. controls (369.28,536.59) and (368.43,535.76) .. (367.4,535.77) .. controls (366.37,535.78) and (365.54,536.62) .. (365.55,537.65) -- cycle ;
\draw   (353.47,572.52) .. controls (353.47,569.52) and (355.9,567.1) .. (358.89,567.1) .. controls (361.89,567.1) and (364.31,569.52) .. (364.31,572.52) .. controls (364.31,575.51) and (361.89,577.94) .. (358.89,577.94) .. controls (355.9,577.94) and (353.47,575.51) .. (353.47,572.52) -- cycle ;
\draw   (271.51,559.11) -- (275.01,553.83) -- (277.06,559.82) ;
\draw   (397.26,556.36) -- (400.76,551.08) -- (402.81,557.07) ;
\draw (200.43,558.35) node [anchor=north west][inner sep=0.75pt]  [font=\small]  {$p_{1}$};
\draw (250.38,512.69) node [anchor=north west][inner sep=0.75pt]  [font=\small]  {$p_{2}$};
\draw (255.73,572.69) node [anchor=north west][inner sep=0.75pt]  [font=\small]  {$p_{3}$};
\draw (365.72,571.52) node [anchor=north west][inner sep=0.75pt]  [font=\scriptsize]  {$1$};
\draw (383.81,526.54) node [anchor=north west][inner sep=0.75pt]  [font=\scriptsize]  {$1$};
\draw (340.62,516.59) node [anchor=north west][inner sep=0.75pt]  [font=\scriptsize]  {$0$};
\draw (369.96,553.11) node [anchor=north west][inner sep=0.75pt]  [font=\scriptsize]  {$2$};
\draw (397.74,562.53) node [anchor=north west][inner sep=0.75pt]  [font=\scriptsize]  {$3$};
\draw (338.39,567.42) node [anchor=north west][inner sep=0.75pt]  [font=\scriptsize]  {$2$};
\draw (347.86,544.02) node [anchor=north west][inner sep=0.75pt]  [font=\scriptsize]  {$1$};			
\end{tikzpicture}
\caption{An oriented flat virtual knotoid $\overline{D_{c_1}}$ and its arc labelling.} \label{fig23}
\end{center}
\end{figure}

To demonstrate non-triviality of $[\overline{D_{c_1}}]$ we compute the flat affine index polynomial $Q_{\overline{D_{c_1}}} (t)$. Denote crossings of $\overline{D_{c_1}}$ by $p_1$, $p_2$ and $p_3$ and assigning integer labelling to the arcs as in Fig.~\ref{fig23}. Then we get 
$$
W_+(p_1) = 1 - 2 = -1, \quad W_+(p_2) = 3 - 1 = 2, \quad W_+(p_3) = 2-3=-1.
$$ 

Therefore,  
$$
f_1(\overline{D_{c_1}}) = \sum_{|W_+(p)| = 1} \operatorname{sign} (W_+(p)) = -2, \qquad  f_2(\overline{D_{c_1}}) = \sum_{|W_+(p)| = 2} \operatorname{sign} (W_+(p)) = 1,
$$ 
and finally, 
$$
Q_{\overline{D_{c_1}}} (t) = \sum_{n \in \mathbb{Z}_{>0}} f_n (\overline{D_{c_1}}) t^n = t^2 - 2t \neq 0. 
$$
Thus, $[\overline{D_{c_1}}] \neq [\overline{D_0}]$, and then $\mathcal{F} (K) \neq 0$. The example is completed. 
}
\end{example}

Subsequently, we discuss several properties of $\mathcal{F} (K)$.

\begin{definition} {\rm 
Let $D$ be a virtual knotoid diagram. 
\begin{itemize} 
\item Denote by $r(D)$ the orientation-reversed virtual knotoid diagram of $D$, and call $r(D)$ the \textit{orientation-reversed image} of $D$.
\item Denote by $m(D)$ the diagram obtained by switches the under- and over-strands of every classical crossings in $D$, and call $m(D)$ the \textit{mirror image} of $D$.
\end{itemize}
	}
\end{definition}

\begin{theorem} \label{th3.3}
Let $K$ be an oriented virtual knotoid, $r(K)$ and $m(K)$ the  orientation-reversed image and mirror image of $K$ respectively. Then we have
\begin{enumerate}
\item[{(1)}]
$\mathcal{F} (r(K) )=\mathcal{F} ( K )$ and $\mathcal{L} (r(K) )=\mathcal{L} ( K )$;
\item[{(2)}]
$\mathcal{F} ( m(K) )=-\mathcal{F} ( K )$ and $\mathcal{L} ( m(K) )=-\mathcal{L} ( K )$.
\end{enumerate}
\end{theorem}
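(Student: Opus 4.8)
The plan is to prove all four identities at the level of a fixed diagram $D$ of $K$, by comparing the three ingredients that enter the definitions of $\mathcal{F}$ and $\mathcal{L}$ term by term: the crossing sign $w_D(c)=\operatorname{sgn}(c)$, the writhe $w(D)$, and the \emph{non-oriented} flat classes $[\overline{D_c}]$, $[\overline{{}_cD}]$, $[\overline{D}]$, $[\overline{D}_{link}]$. The structural fact I intend to exploit throughout is that the target $\mathbb{Z}$-module is generated by non-oriented flat virtual knotoids, so that orientation (and, after flattening, over/under information) is forgotten once one passes from a smoothed diagram to its flat class. I would compute $\mathcal{F}(r(D))$, $\mathcal{F}(m(D))$, $\mathcal{L}(r(D))$, $\mathcal{L}(m(D))$, match each summand against the corresponding summand of $\mathcal{F}(D)$ or $\mathcal{L}(D)$, and then invoke Theorem~\ref{th4.2} together with $\mathcal{F}(K)=\mathcal{F}(D)$, $\mathcal{L}(K)=\mathcal{L}(D)$ to descend to knotoids.

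First I would record the sign bookkeeping, which is the only place a global minus sign can enter. Reversing the orientation of the whole diagram reverses both strands at every classical crossing, hence preserves each sign, so $w_{r(D)}(c)=w_D(c)$ and $w(r(D))=w(D)$. Switching over/under at every crossing reverses each sign, so $w_{m(D)}(c)=-w_D(c)$ and $w(m(D))=-w(D)$. These already predict the two statements: no sign for $r$, an overall sign for $m$.

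The heart of the argument, and the step I expect to be the main obstacle, is showing that the flat classes are unchanged under both operations. For the mirror image this is immediate: the $0$- and $1$-smoothings are defined purely by the orientation at $c$ (they are the smoothing against and along orientation of~\cite{KPV}) and are insensitive to the over/under data, while flattening forgets over/under entirely; hence $\overline{(m(D))_c}=\overline{D_c}$, $\overline{{}_c m(D)}=\overline{{}_cD}$, $\overline{m(D)}=\overline{D}$ and $\overline{m(D)}_{link}=\overline{D}_{link}$ as flat diagrams. For the orientation reversal the subtle point is that each smoothing is selected relative to the local orientation, so a priori a global reversal might interchange the $0$- and $1$-smoothings. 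I would settle this by a local analysis at the crossing: of the two planar reconnections of the four ends, exactly one is coherent with the orientation (it pairs an incoming end with an outgoing end), and reversing both arrows turns every incoming end into an outgoing one and vice versa, so the same reconnection remains the coherent one. Thus the $0$-smoothing of $r(D)$ agrees with that of $D$ as an unoriented local picture, and likewise for the $1$-smoothing; since the flat classes are non-oriented, this yields $[\overline{(r(D))_c}]=[\overline{D_c}]$, $[\overline{{}_c r(D)}]=[\overline{{}_cD}]$, $[\overline{r(D)}]=[\overline{D}]$ and $[\overline{r(D)}_{link}]=[\overline{D}_{link}]$.

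Finally I would assemble the pieces by substituting into the definitions. For the reversal,
\begin{equation*}
\mathcal{F}(r(D))=\sum_c w_D(c)\,[\overline{D_c}]-w(D)\,[\overline{D}]=\mathcal{F}(D),
\end{equation*}
and identically $\mathcal{L}(r(D))=\mathcal{L}(D)$. For the mirror, the negated signs $w_{m(D)}(c)=-w_D(c)$ and $w(m(D))=-w(D)$ factor out a global $-1$ while the flat classes are untouched, giving $\mathcal{F}(m(D))=-\mathcal{F}(D)$ and $\mathcal{L}(m(D))=-\mathcal{L}(D)$. Passing to $K$ via $\mathcal{F}(K)=\mathcal{F}(D)$ and $\mathcal{L}(K)=\mathcal{L}(D)$ completes all four claims.
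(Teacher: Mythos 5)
Your proposal is correct and follows essentially the same route as the paper: fix a diagram, check that orientation reversal preserves crossing signs while mirroring negates them, check that the non-oriented flat classes of the smoothed diagrams are unchanged in both cases, and substitute into the definitions of $\mathcal{F}$ and $\mathcal{L}$. Your local argument that reversing all orientations does not interchange the $0$- and $1$-smoothings is a welcome explicit justification of a point the paper dismisses with ``it is easy to see,'' but it is the same underlying fact, not a different method.
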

  
\begin{proof}	  	
Consider a virtual knotoid diagram $D$ of $K$. Let $r(D)$ be the orientation-reversed image of $D$, and $r(D)$ be the mirror image of $D$.  
For a classical crossing $c$ in $D$, we denote the corresponding crossing in $r(D)$ and $m(D)$ by $r(c)$ and $m(c)$ respectively. Integer labeling of $D$ induce integer labeling of $r(D)$ and $m(D)$ as presented in Fig.~\ref{fig111}. 

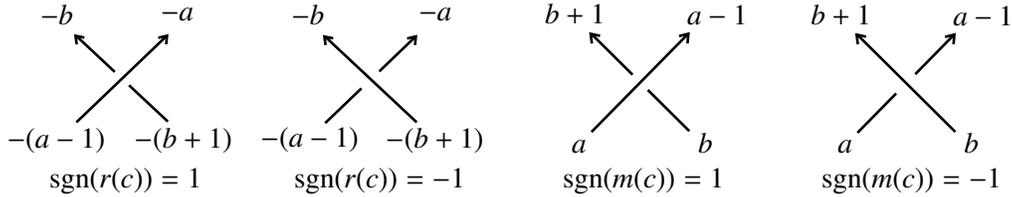
\begin{figure}[!ht]
\begin{center}
\tikzset{every picture/.style={line width=1.0pt}}  
\begin{tikzpicture}[x=0.75pt,y=0.75pt,yscale=-1,xscale=1]
\draw    (260.59,457.28) -- (306.85,500.58) ;
\draw    (180.71,456.99) -- (135.19,500.6) ;
\draw    (135.09,456.91) -- (154.57,475.13) ;
\draw   (135.37,461.22) -- (134.79,456.94) -- (139.24,457.8) ;
\draw   (176.5,457.92) -- (180.9,456.9) -- (180.49,461.2) ;
\draw    (161.75,481.87) -- (181.35,500.21) ;
\draw   (260.87,461.59) -- (260.29,457.31) -- (264.73,458.17) ;
\draw   (301.99,458.28) -- (306.4,457.26) -- (305.99,461.57) ;
\draw    (306.21,457.36) -- (287.94,474.89) ;
\draw    (278.96,483.45) -- (260.68,500.97) ;
\draw    (529.04,456.33) -- (578.49,505.42) ;
\draw    (443.66,456.01) -- (394.99,505.45) ;
\draw    (394.89,455.91) -- (415.71,476.56) ;
\draw   (395.19,460.8) -- (394.57,455.94) -- (399.32,456.93) ;
\draw   (439.15,457.05) -- (443.86,455.9) -- (443.42,460.77) ;
\draw    (423.39,484.22) -- (444.34,505) ;
\draw   (529.33,461.22) -- (528.72,456.36) -- (533.47,457.34) ;
\draw   (573.3,457.47) -- (578.01,456.31) -- (577.57,461.19) ;
\draw    (577.8,456.42) -- (558.27,476.3) ;
\draw    (548.67,486) -- (529.14,505.87) ;
\draw (98.67,502.11) node [anchor=north west][inner sep=0.75pt]  {$-(a-1)$};
\draw (162.82,501.7) node [anchor=north west][inner sep=0.75pt]  {$-(b+1)$};
\draw (115.35,439.81) node [anchor=north west][inner sep=0.75pt]  {$-b$};
\draw (176.1,439.99) node [anchor=north west][inner sep=0.75pt]   {$-a$};
\draw (242,439.81) node [anchor=north west][inner sep=0.75pt]  {$-b$};
\draw (306.24,439.99) node [anchor=north west][inner sep=0.75pt]   {$-a$};
\draw (383.49,507.7) node [anchor=north west][inner sep=0.75pt]   {$a$};
\draw (447.1,504.57) node [anchor=north west][inner sep=0.75pt]   {$b$};
\draw (369.85,439.31) node [anchor=north west][inner sep=0.75pt]   {$b+1$};
\draw (441.6,439.75) node [anchor=north west][inner sep=0.75pt]   {$a-1$};
\draw (517.64,507.7) node [anchor=north west][inner sep=0.75pt]   {$a$};
\draw (581.25,504.57) node [anchor=north west][inner sep=0.75pt]  {$b$};
\draw (504,439.73) node [anchor=north west][inner sep=0.75pt]   {$b+1$};
\draw (575.74,440.17) node [anchor=north west][inner sep=0.75pt]   {$a-1$};
\draw (226.56,501.67) node [anchor=north west][inner sep=0.75pt]  {$-(a-1)$};
\draw (289.63,501.7) node [anchor=north west][inner sep=0.75pt]  {$-(b+1)$};
\draw (119.94,521.87) node [anchor=north west][inner sep=0.75pt]    {$\operatorname{sgn}(r(c))=1$};
\draw (243.44,521.87) node [anchor=north west][inner sep=0.75pt]    {$\operatorname{sgn}(r(c))=-1$};
\draw (378.94,521.87) node [anchor=north west][inner sep=0.75pt]    {$\operatorname{sgn}(m(c))=1$};
\draw (509.44,521.87) node [anchor=north west][inner sep=0.75pt]    {$\operatorname{sgn}(m(c))=-1$};
\end{tikzpicture}
\caption{Assigning labels around classical crossings $r(c) \in r(D)$ and $m(c) \in m(D)$ .} \label{fig111}
\end{center}
\end{figure}	

(1) Now we first prove that $\mathcal{F} (r(K) )=\mathcal{F} ( K )$. 
It is easy to see that $\operatorname{sgn}(r(c)) = \operatorname{sgn}(c)$. Moreover, $w_{D}(c)=w_{r(D)}(r(c))$, $w(D)=w(r(D))$, and $[\overline{D_c}] = [\overline{r(D)_{r(c)}}]$ with $[\overline{D}] = [\overline{r(D)}]$. Then   
\begin{eqnarray*}
\mathcal{F} (r(K)) &= & \mathcal{F} (r(D)) = \sum _ {r(c)} w_{r(D)} (r(c)) \, [\overline{r(D)_{r(c)}}] - w (r(D)) \, [\overline{r(D)}]  \cr 
& = & \sum_{c} w_D(c) \, [\overline{D_c}] - w(D) \, [\overline{D}] 
=\mathcal{F} (D) = \mathcal{F} (K).
\end{eqnarray*}

Next we prove that $\mathcal{L} (r(K) )=\mathcal{L} (K)$. It is easy to see that $\operatorname{sgn}(r(c)) = \operatorname{sgn}(c)$. Moreover, $w_{D}(c)=w_{r(D)}(r(c))$, $w(D)=w(r(D))$, and $[\overline{{}_cD}] = [\overline{{}_{r(c)}r(D)}]$ with $[\overline{D}_{link}] = [\overline{r(D)}_{link}]$. Then   
\begin{eqnarray*}
\mathcal{L} (r(K)) & = & \mathcal{L} (r(D)) = \sum _ {r(c)} w_{r(D)} (r(c)) \, [\overline{{}_{r(c)}r(D)}] - w (r(D)) \, [\overline{r(D)}_{link}] \cr 
& = & \sum_{c} w_D(c) \, [\overline{{}_cD}] - w(D) \, [\overline{D}_{link}] 
=\mathcal{L} (D) = \mathcal{L} (K).
\end{eqnarray*}

(2) We first prove that $\mathcal{F} ( m(K) )=-\mathcal{F} ( K )$. It is easy to see that $\operatorname{sgn}(m(c)) = - \operatorname{sgn}(c)$. Moreover, $w_D (c)=-w_{m(D)}(m(c))$, $w(D)=-w(m(D))$ and $[\overline{D_c}] = [\overline{m(D)_{m(c)}}]$ with  $[\overline{m(D)} ] = [\overline{D}]$. Then  
\begin{eqnarray*}
\mathcal{F} (m(K))&= & \mathcal{F} (m(D)) =  \sum_{m(c)} w_{m(D)} (m(c)) \, [\overline{m(D)_{m(c)}}] - w(m(D)) \, [\overline{m(D)}] 
=\sum_{c} (-w_{D}(c) ) \, [\overline{D_c}] + w(D) \, [\overline{D}] \\
&=& - \Big( \sum_{c} w_D(c) \, [\overline{D_c}] - w(D) \, [\overline{D}] \Big)
=- \mathcal{F} (D) = - \mathcal{F} (K).
\end{eqnarray*}

Next we prove that $\mathcal{L} ( m(K) )=-\mathcal{L} ( K )$.  It is easy to see that $\operatorname{sgn}(m(c)) = - \operatorname{sgn}(c)$. Moreover, $w_D (c)=-w_{m(D)}(m(c))$, $w(D)=-w(m(D))$ and $[\overline{{}_cD}] = [\overline{{}_{m(c)}m(D)}]$ with  $[\overline{m(D)}_{link}] = [\overline{D}_{link}]$. Then  
\begin{eqnarray*}
\mathcal{L} (m(K))&= & \mathcal{L} (m(D)) =  \sum_{m(c)} w_{m(D)} (m(c)) \, [\overline{{}_{m(c)}m(D)}] - w(m(D)) \, [\overline{m(D)}_{link}] 
=\sum_{c} (-w_{D}(c) ) \, [\overline{{}_cD}] + w(D) \, [\overline{D}_{link}] \\
&=& - \Big( \sum_{c} w_D(c) \, [\overline{{}_cD}] - w(D) \, [\overline{D}_{link}] \Big)
	=- \mathcal{L} (D) = - \mathcal{L} (K).
\end{eqnarray*} 
This complete the proof.
\end{proof}

\begin{theorem}\label{th3.4}
$\mathcal{F} ( K )$ is a Vassiliev invariant of order one of virtual knotoids. 
\end{theorem}

\begin{proof}
To prove the statement we need to show that 
\begin{itemize}
\item[(1)] $\mathcal{F}^{(2)}$ vanishes on any singular virtual knotoid with two singular crossings, and 
\item[(2)] There is a singular virtual knotoid with one singular crossing which has nontrivial $\mathcal{F}^{(1)}$.
\end{itemize}	
	
We demonstrate that both statements hold.	
	
(1) Let $D$ be a virtual knotoid diagram and $c_1, c_2$ be two classical crossings. Without loss of generality, we can assume that $w_D(c_1) = w_D(c_2) = 1$. We use notation $D_{-+}$ (resp. $D_{+-}$) to denote the virtual knotoid diagram obtained from $D$ by switching $c_1$ (resp.  $c_2)$), and use $D_{--}$ to denote the diagram obtained from $D$ by switching both $c_1$ and $c_2$. Moreover, $D_{++}= D$.
It is clear that 
$$
[\overline{D_{++}}] = [\overline{D_{+-}}] = [\overline{D_{-+}}] = [\overline{D_{--}}]  
$$
and 
$$
w(D_{++})-w(D_{+-})-w(D_{-+})+w(D_{--})=0. 
$$
Therefore, 
\begin{eqnarray*}
\mathcal{F}^{(2)}(D)&=&\mathcal{F}(D_{++}) - \mathcal{F}(D_{+-}) - \mathcal{F}(D_{-+}) + \mathcal{F}(D_{--}) \\ 
&=& \left(\sum_c w_{D_{++}} (c) \, [\overline{D_{++_c}}] - w(D_{++}) \, [\overline{D_{++}}] \right) 
  - \left( \sum_c w_{D_{+-}} (c) \, [\overline{D_{+-_c}}] - w(D_{+-}) \, [\overline{D_{+-}}] \right) \\ 
& & -\left(\sum_c w_{D_{-+}}(c) \, [\overline{D_{-+_c}}] - w(D_{-+}) \, [\overline{D_{-+}}] \right)  
  +\left( \sum_c w_{D_{--}}(c) \, [\overline{D_{--_c}}] - w(D_{--}) \, [\overline{D_{--}}] \right) \\ 
&=& \sum_c  w_{D_{++}} (c) \, [\overline{D_{++_c}}] - \sum_c w_{D_{+-}} (c) \, [\overline{D_{+-_c}}]  
- \sum_c w_{D_{-+}}(c) \, [\overline{D_{-+_c}}] + \sum_c w_{D_{--}}(c) \, [\overline{D_{--_c}}] \\ 
&=& \left(  w_{D_{++}} (c_1) \, [\overline{D_{++_{c_1}}}] +  w_{D_{++}} (c_2) \, [\overline{D_{++_{c_2}}}] + \sum_{c\neq c_1,c_2 }w_{D_{++}} (c) \, [\overline{D_{++_{c}}}] \right)  \\
& & - \left(  w_{D_{+-}} (c_1) \, [\overline{D_{+-_{c_1}}}] +   w_{D_{+-}} (c_2) \, [\overline{D_{+-_{c_2}}}] + \sum_{c\neq c_1,c_2 }w_{D_{+-}} (c) \, [\overline{D_{+-_{c}}}]  \right) \\
& & - \left(  w_{D_{-+}}(c_1) \, [ \overline{D_{-+_{c_1}}} ] +  w_{D_{-+}}(c_2) \,  [\overline{D_{-+_{c_2}}}] + \sum_{c\neq c_1,c_2 }w_{D_{-+}} (c) \, [\overline{D_{-+_{c}}}]  \right)  \\
& & + \left(  w_{D_{--}}(c_1) \, [\overline{D_{--_{c_1}}}] + w_{D_{--}}(c_2) \, [\overline{D_{--_{c_2}}}] + \sum_{c\neq c_1,c_2 }w_{D_{--}} (c) \, [\overline{D_{--_{c}}}]  \right)  \\ 
&=& \left(  w_{D_{++}} (c_1) \, [\overline{D_{++_{c_1}}}] +  w_{D_{++}} (c_2) \, [\overline{D_{++_{c_2}}}] \right)  
 - \left(  w_{D_{+-}} (c_1) \, [\overline{D_{+-_{c_1}}}] +   w_{D_{+-}} (c_2) \, [\overline{D_{+-_{c_2}}}] \right) \\
& & - \left(  w_{D_{-+}}(c_1) \,  [\overline{D_{-+_{c_1}}}]  +  w_{D_{-+}}(c_2) \,  [\overline{D_{-+_{c_2}}}] \right)
+ \left(  w_{D_{--}}(c_1) \, [\overline{D_{--_{c_1}}}] + w_{D_{--}}(c_2) \, [\overline{D_{--_{c_2}}}] \right)  \\ 
&=& \left([\overline{D_{++_{c_1}}}] + [\overline{D_{++_{c_2}}}] \right) - \left([\overline{D_{+-_{c_1}}}] - [\overline{D_{+-_{c_2}}}] \right)  
 -\left(-[\overline{D_{-+_{c_1}}}] + [\overline{D_{-+_{c_2}}}] \right) + \left(-[\overline{D_{--_{c_1}}}] - [\overline{D_{--_{c_2}}}] \right) \\ 
&=& \left([\overline{D_{++_{c_1}}}] - [\overline{D_{+-_{c_1}}}] \right) + \left( [\overline{D_{++_{c_2}}}] - [\overline{D_{-+_{c_2}}}] \right)   
 + \left( [\overline{D_{+-_{c_2}}}] - [\overline{D_{--_{c_2}}}] \right) + \left([\overline{D_{-+_{c_1}}}] - [\overline{D_{--_{c_1}}}] \right) = 0. 
\end{eqnarray*}

(2) Now consider the singular virtual knotoid diagram $D$ in Fig.~\ref{fig24}.  We use $[D_{+}]$ to denote the virtual knotoid diagram obtained from $D$ by replacing singular crossing $c$ with positive crossing, and use $[D_{-}]$ to denote the virtual knotoid diagram obtained from $D$ by replacing singular crossing $c$ with negative crossing, see Fig.~\ref{fig24}. As well as before, the trivial knotoid is denoted by $[D_0]$. 
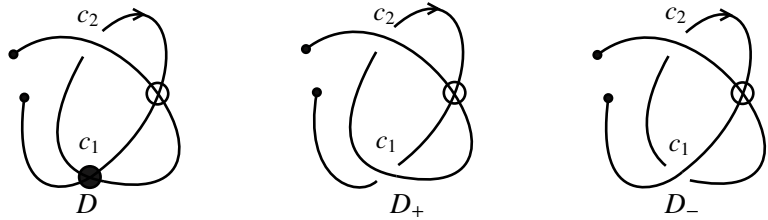
\begin{figure}[htbp]
\begin{center}
\tikzset{every picture/.style={line width=1.0pt}}  
\begin{tikzpicture}[x=0.75pt,y=0.75pt,yscale=-1.0,xscale=1.0]
\draw  [fill={rgb, 255:red, 24; green, 23; blue, 23 }  ,fill opacity=1 ] (213.82,152.99) .. controls (213.82,149.99) and (216.25,147.57) .. (219.24,147.57) .. controls (222.23,147.57) and (224.66,149.99) .. (224.66,152.99) .. controls (224.66,155.98) and (222.23,158.4) .. (219.24,158.4) .. controls (216.25,158.4) and (213.82,155.98) .. (213.82,152.99) -- cycle ;
\draw    (180.72,91.07) .. controls (234.6,49.26) and (311.46,172.46) .. (226.34,155.18) ;
\draw    (215.84,92.2) .. controls (205.27,110.77) and (188.56,148.04) .. (226.34,155.18) ;
\draw    (225.61,79.31) .. controls (261.97,48.01) and (275.38,110.76) .. (223.52,150.13) ;
\draw    (223.52,150.13) .. controls (213.1,159.76) and (178.64,170.69) .. (186.22,112.88) ;
\draw   (240.98,68.16) -- (246.16,70.9) -- (240.98,73.65) ;
\draw    (328.23,88.41) .. controls (382.11,46.6) and (458.97,169.8) .. (373.85,152.52) ;
\draw    (363.66,89.83) .. controls (353.09,108.4) and (336.07,145.38) .. (373.85,152.52) ;
\draw    (372.76,77.28) .. controls (409.8,44.18) and (426.63,107.6) .. (374.78,146.97) ;
\draw    (364,154.98) .. controls (353.58,164.61) and (325.99,153.64) .. (333.74,110.22) ;
\draw    (475.23,91.35) .. controls (529.1,49.54) and (606.99,171.92) .. (521.87,154.64) ;
\draw    (511.09,91.92) .. controls (500.52,110.49) and (493.58,133.73) .. (509.74,147.32) ;
\draw    (519.85,80.07) .. controls (557.22,46.97) and (569.88,111.04) .. (518.03,150.41) ;
\draw    (518.03,150.41) .. controls (507.61,160.04) and (473.14,170.97) .. (480.73,113.17) ;
\draw  [fill={rgb, 255:red, 17; green, 16; blue, 16 }  ,fill opacity=1 ] (331.87,110.24) .. controls (331.88,111.27) and (332.72,112.1) .. (333.75,112.09) .. controls (334.79,112.08) and (335.61,111.24) .. (335.61,110.21) .. controls (335.6,109.18) and (334.75,108.35) .. (333.72,108.36) .. controls (332.69,108.37) and (331.86,109.21) .. (331.87,110.24) -- cycle ;
\draw  [fill={rgb, 255:red, 17; green, 16; blue, 16 }  ,fill opacity=1 ] (473.36,91.37) .. controls (473.36,92.4) and (474.21,93.23) .. (475.24,93.22) .. controls (476.27,93.21) and (477.1,92.37) .. (477.09,91.34) .. controls (477.09,90.31) and (476.24,89.48) .. (475.21,89.49) .. controls (474.18,89.49) and (473.35,90.34) .. (473.36,91.37) -- cycle ;
\draw  [fill={rgb, 255:red, 17; green, 16; blue, 16 }  ,fill opacity=1 ] (326.37,88.42) .. controls (326.37,89.45) and (327.22,90.28) .. (328.25,90.27) .. controls (329.28,90.27) and (330.11,89.42) .. (330.1,88.39) .. controls (330.09,87.36) and (329.25,86.53) .. (328.22,86.54) .. controls (327.19,86.55) and (326.36,87.39) .. (326.37,88.42) -- cycle ;
\draw  [fill={rgb, 255:red, 17; green, 16; blue, 16 }  ,fill opacity=1 ] (478.86,113.18) .. controls (478.87,114.21) and (479.71,115.04) .. (480.74,115.03) .. controls (481.78,115.02) and (482.61,114.18) .. (482.6,113.15) .. controls (482.59,112.12) and (481.75,111.29) .. (480.71,111.3) .. controls (479.68,111.31) and (478.85,112.15) .. (478.86,113.18) -- cycle ;
\draw  [fill={rgb, 255:red, 17; green, 16; blue, 16 }  ,fill opacity=1 ] (184.36,112.9) .. controls (184.36,113.93) and (185.21,114.76) .. (186.24,114.75) .. controls (187.27,114.74) and (188.1,113.9) .. (188.09,112.87) .. controls (188.08,111.84) and (187.24,111.01) .. (186.21,111.02) .. controls (185.18,111.03) and (184.35,111.87) .. (184.36,112.9) -- cycle ;
\draw  [fill={rgb, 255:red, 17; green, 16; blue, 16 }  ,fill opacity=1 ] (178.85,91.08) .. controls (178.86,92.11) and (179.7,92.94) .. (180.74,92.93) .. controls (181.77,92.93) and (182.6,92.08) .. (182.59,91.05) .. controls (182.58,90.02) and (181.74,89.19) .. (180.71,89.2) .. controls (179.67,89.21) and (178.84,90.05) .. (178.85,91.08) -- cycle ;
\draw   (247.94,110.89) .. controls (247.94,107.89) and (250.36,105.47) .. (253.36,105.47) .. controls (256.35,105.47) and (258.78,107.89) .. (258.78,110.89) .. controls (258.78,113.88) and (256.35,116.31) .. (253.36,116.31) .. controls (250.36,116.31) and (247.94,113.88) .. (247.94,110.89) -- cycle ;
\draw   (397.77,110.45) .. controls (397.77,107.45) and (400.2,105.03) .. (403.19,105.03) .. controls (406.18,105.03) and (408.61,107.45) .. (408.61,110.45) .. controls (408.61,113.44) and (406.18,115.86) .. (403.19,115.86) .. controls (400.2,115.86) and (397.77,113.44) .. (397.77,110.45) -- cycle ;
\draw   (543.03,110.66) .. controls (543.03,107.66) and (545.45,105.24) .. (548.44,105.24) .. controls (551.44,105.24) and (553.86,107.66) .. (553.86,110.66) .. controls (553.86,113.65) and (551.44,116.07) .. (548.44,116.07) .. controls (545.45,116.07) and (543.03,113.65) .. (543.03,110.66) -- cycle ;
\draw   (389.56,65.3) -- (394.73,68.04) -- (389.56,70.79) ;
\draw   (537.11,68.41) -- (542.28,71.16) -- (537.11,73.9) ;
\draw (212.05,131.28) node [anchor=north west][inner sep=0.75pt]  [font=\small]  {$c_{1}$};
\draw (211.23,67.02) node [anchor=north west][inner sep=0.75pt]  [font=\small]  {$c_{2}$};
\draw (360.7,66.05) node [anchor=north west][inner sep=0.75pt]  [font=\small]  {$c_{2}$};
\draw (507.72,65.81) node [anchor=north west][inner sep=0.75pt]  [font=\small]  {$c_{2}$};
\draw (361.74,130.54) node [anchor=north west][inner sep=0.75pt]  [font=\small]  {$c_{1}$};
\draw (509.63,132.53) node [anchor=north west][inner sep=0.75pt]  [font=\small]  {$c_{1}$};
\draw (368.68,159.97) node [anchor=north west][inner sep=0.75pt]  [font=\normalsize]  {$D_{+}$};
\draw (507.27,159.93) node [anchor=north west][inner sep=0.75pt]  [font=\normalsize]  {$D_{-}$};
\draw (210.54,159.73) node [anchor=north west][inner sep=0.75pt]  [font=\normalsize]  {$D$};
\end{tikzpicture}
\caption{Singular virtual knotoid diagram $D$, virtual knotoid diagrams $D_{+}$ and $D_{-}$.\label{fig24}}
\end{center}
\end{figure}
\begin{figure}[htbp]
\begin{center}
\tikzset{every picture/.style={line width=1.0pt}}  
\begin{tikzpicture}[x=0.75pt,y=0.75pt,yscale=-1.0,xscale=1.0]
\draw    (252.04,304.22) -- (272.81,304.08) ;
\draw   (268.94,301.13) -- (273.79,304) -- (268.99,307.03) ; 
\draw    (252.09,375.89) -- (272.85,375.75) ;
\draw   (268.98,372.8) -- (273.83,375.67) -- (269.04,378.7) ;
\draw    (357.22,304.39) -- (377.98,304.25) ;
\draw   (374.11,301.3) -- (378.96,304.17) -- (374.17,307.2) ;
\draw    (443.17,304.58) .. controls (423.17,310.43) and (417.02,296.89) .. (396.98,303.09) ; 
\draw  [fill={rgb, 255:red, 17; green, 16; blue, 16 }  ,fill opacity=1 ] (395.11,303.1) .. controls (395.12,304.13) and (395.96,304.96) .. (397,304.96) .. controls (398.03,304.95) and (398.86,304.1) .. (398.85,303.07) .. controls (398.84,302.04) and (398,301.22) .. (396.97,301.22) .. controls (395.93,301.23) and (395.1,302.07) .. (395.11,303.1) -- cycle ; 
\draw  [fill={rgb, 255:red, 17; green, 16; blue, 16 }  ,fill opacity=1 ] (441.3,304.6) .. controls (441.31,305.63) and (442.15,306.45) .. (443.18,306.45) .. controls (444.22,306.44) and (445.05,305.6) .. (445.04,304.57) .. controls (445.03,303.53) and (444.19,302.71) .. (443.15,302.71) .. controls (442.12,302.72) and (441.29,303.56) .. (441.3,304.6) -- cycle ;
\draw    (356.42,375.86) -- (377.18,375.71) ;
\draw   (373.31,372.77) -- (378.16,375.64) -- (373.37,378.66) ;
\draw    (442.37,376.05) .. controls (422.37,381.89) and (416.22,368.35) .. (396.18,374.56) ;
\draw  [fill={rgb, 255:red, 17; green, 16; blue, 16 }  ,fill opacity=1 ] (394.31,374.57) .. controls (394.32,375.6) and (395.16,376.43) .. (396.2,376.42) .. controls (397.23,376.41) and (398.06,375.57) .. (398.05,374.54) .. controls (398.04,373.51) and (397.2,372.68) .. (396.17,372.69) .. controls (395.13,372.7) and (394.3,373.54) .. (394.31,374.57) -- cycle ;
\draw  [fill={rgb, 255:red, 17; green, 16; blue, 16 }  ,fill opacity=1 ] (440.5,376.06) .. controls (440.51,377.09) and (441.35,377.92) .. (442.38,377.91) .. controls (443.42,377.9) and (444.25,377.06) .. (444.24,376.03) .. controls (444.23,375) and (443.39,374.17) .. (442.35,374.18) .. controls (441.32,374.19) and (440.49,375.03) .. (440.5,376.06) -- cycle ;
\draw    (292.29,299.06) .. controls (309.42,307.96) and (320.85,329.31) .. (333.77,316.83) ;
\draw    (308.09,293.92) .. controls (302.79,301.48) and (301.79,309.33) .. (292.79,319.02) ;
\draw    (308.09,293.92) .. controls (329.51,265.73) and (350.9,297.28) .. (333.77,316.83) ;
\draw    (174.86,288.46) .. controls (224.52,259.27) and (260.62,345.8) .. (221.18,332.3) ;
\draw    (207.92,280.85) .. controls (188.85,298.39) and (219.18,316.75) .. (201.42,330.39) ;
\draw    (234.38,279.08) .. controls (240.28,288) and (240.09,301.01) .. (230.33,305.2) ;
\draw    (201.42,330.39) .. controls (193.68,336.52) and (173.2,329.53) .. (178.95,301.85) ;
\draw  [fill={rgb, 255:red, 17; green, 16; blue, 16 }  ,fill opacity=1 ] (177.08,304.5) .. controls (177.09,305.53) and (177.93,306.36) .. (178.96,306.35) .. controls (180,306.34) and (180.82,305.5) .. (180.82,304.47) .. controls (180.81,303.44) and (179.96,302.61) .. (178.93,302.62) .. controls (177.9,302.63) and (177.07,303.47) .. (177.08,304.5) -- cycle ;
\draw  [fill={rgb, 255:red, 17; green, 16; blue, 16 }  ,fill opacity=1 ] (172.99,288.85) .. controls (173,289.88) and (173.84,290.71) .. (174.88,290.7) .. controls (175.91,290.69) and (176.74,289.85) .. (176.73,288.82) .. controls (176.72,287.79) and (175.88,286.96) .. (174.85,286.97) .. controls (173.81,286.98) and (172.98,287.82) .. (172.99,288.85) -- cycle ;
\draw   (226.06,304.06) .. controls (226.06,301.07) and (228.48,298.64) .. (231.48,298.64) .. controls (234.47,298.64) and (236.89,301.07) .. (236.89,304.06) .. controls (236.89,307.05) and (234.47,309.48) .. (231.48,309.48) .. controls (228.48,309.48) and (226.06,307.05) .. (226.06,304.06) -- cycle ;
\draw    (230.33,305.2) .. controls (219.13,310.4) and (205.59,326.69) .. (221.18,332.3) ;
\draw    (207.92,280.85) .. controls (220.08,269.88) and (230.12,272.46) .. (234.38,279.08) ;
\draw  [fill={rgb, 255:red, 17; green, 16; blue, 16 }  ,fill opacity=1 ] (290.42,299.08) .. controls (290.43,300.11) and (291.27,300.94) .. (292.31,300.93) .. controls (293.34,300.92) and (294.17,300.08) .. (294.16,299.05) .. controls (294.15,298.02) and (293.31,297.19) .. (292.28,297.2) .. controls (291.24,297.2) and (290.41,298.05) .. (290.42,299.08) -- cycle ;
\draw  [fill={rgb, 255:red, 17; green, 16; blue, 16 }  ,fill opacity=1 ] (290.92,318.9) .. controls (290.93,319.93) and (291.77,320.76) .. (292.81,320.76) .. controls (293.84,320.75) and (294.67,319.9) .. (294.66,318.87) .. controls (294.65,317.84) and (293.81,317.02) .. (292.78,317.02) .. controls (291.74,317.03) and (290.91,317.87) .. (290.92,318.9) -- cycle ;
\draw    (292.29,371.06) .. controls (309.42,379.96) and (320.85,401.31) .. (333.77,388.83) ;
\draw    (308.09,365.92) .. controls (302.79,373.48) and (301.79,381.33) .. (292.79,391.02) ;
\draw    (308.09,365.92) .. controls (329.51,337.73) and (350.9,369.28) .. (333.77,388.83) ;
\draw  [fill={rgb, 255:red, 17; green, 16; blue, 16 }  ,fill opacity=1 ] (290.42,371.08) .. controls (290.43,372.11) and (291.27,372.94) .. (292.31,372.93) .. controls (293.34,372.92) and (294.17,372.08) .. (294.16,371.05) .. controls (294.15,370.02) and (293.31,369.19) .. (292.28,369.2) .. controls (291.24,369.2) and (290.41,370.05) .. (290.42,371.08) -- cycle ;
\draw  [fill={rgb, 255:red, 17; green, 16; blue, 16 }  ,fill opacity=1 ] (290.92,390.9) .. controls (290.93,391.93) and (291.77,392.76) .. (292.81,392.76) .. controls (293.84,392.75) and (294.67,391.9) .. (294.66,390.87) .. controls (294.65,389.84) and (293.81,389.02) .. (292.78,389.02) .. controls (291.74,389.03) and (290.91,389.87) .. (290.92,390.9) -- cycle ;
\draw    (172.86,397.52) .. controls (222.52,426.71) and (264.34,343.28) .. (219.18,353.68) ;
\draw    (205.92,405.13) .. controls (186.85,387.59) and (217.18,369.23) .. (199.42,355.6) ;
\draw    (232.38,406.9) .. controls (238.28,397.98) and (238.09,384.97) .. (228.33,380.78) ;
\draw    (199.42,355.6) .. controls (191.68,349.46) and (171.2,356.45) .. (176.95,384.13) ;
\draw  [fill={rgb, 255:red, 17; green, 16; blue, 16 }  ,fill opacity=1 ] (175.08,381.48) .. controls (175.09,380.45) and (175.93,379.62) .. (176.96,379.63) .. controls (178,379.64) and (178.82,380.48) .. (178.82,381.51) .. controls (178.81,382.54) and (177.96,383.37) .. (176.93,383.36) .. controls (175.9,383.35) and (175.07,382.51) .. (175.08,381.48) -- cycle ;
\draw  [fill={rgb, 255:red, 17; green, 16; blue, 16 }  ,fill opacity=1 ] (170.99,397.13) .. controls (171,396.1) and (171.84,395.27) .. (172.88,395.28) .. controls (173.91,395.29) and (174.74,396.13) .. (174.73,397.16) .. controls (174.72,398.19) and (173.88,399.02) .. (172.85,399.01) .. controls (171.81,399.01) and (170.98,398.16) .. (170.99,397.13) -- cycle ;
\draw   (226.06,381.93) .. controls (226.06,384.92) and (228.48,387.34) .. (231.48,387.34) .. controls (234.47,387.34) and (236.89,384.92) .. (236.89,381.93) .. controls (236.89,378.93) and (234.47,376.51) .. (231.48,376.51) .. controls (228.48,376.51) and (226.06,378.93) .. (226.06,381.93) -- cycle ;
\draw    (228.33,380.78) .. controls (217.13,375.58) and (203.59,359.29) .. (219.18,353.68) ;
\draw    (205.92,405.13) .. controls (218.08,416.11) and (228.12,413.52) .. (232.38,406.9) ;
\draw (144,354.6) node [anchor=north west][inner sep=0.75pt]  [font=\normalsize]  {$\overline{D}_{{+}_{c_{2}}}$};
\draw (251.09,286.00) node [anchor=north west][inner sep=0.75pt]  [font=\footnotesize]  {$f\Omega_{1}^{v}$};
\draw (251.54,358.4) node [anchor=north west][inner sep=0.75pt]  [font=\footnotesize]  {$f\Omega_{1}^{v}$};
\draw (356.45,287.00) node [anchor=north west][inner sep=0.75pt]  [font=\footnotesize]  {$f\Omega_{1}$};
\draw (356.45,359.4) node [anchor=north west][inner sep=0.75pt]  [font=\footnotesize]  {$f\Omega_{1}$};
\draw (142.86,285.1) node [anchor=north west][inner sep=0.75pt]  [font=\normalsize]  {$\overline{D}_{{+}_{c_{1}}}$};
\end{tikzpicture}
\caption{Flat diagrams $\overline{D_{+_{c_1}}}$ and $\overline{D_{+_{c_2}}}$.} \label{fig25} 
\end{center}
\end{figure}

\begin{figure}[htbp]
\begin{center}
\tikzset{every picture/.style={line width=1.0pt}}  
\begin{tikzpicture}[x=0.75pt,y=0.75pt,yscale=-1.0,xscale=1.0]
\draw    (252.04,304.22) -- (272.81,304.08) ;
\draw   (268.94,301.13) -- (273.79,304) -- (268.99,307.03) ; 
\draw    (252.09,375.89) -- (272.85,375.75) ;
\draw   (268.98,372.8) -- (273.83,375.67) -- (269.04,378.7) ;
\draw    (357.22,304.39) -- (377.98,304.25) ;
\draw   (374.11,301.3) -- (378.96,304.17) -- (374.17,307.2) ;
\draw    (443.17,304.58) .. controls (423.17,310.43) and (417.02,296.89) .. (396.98,303.09) ; 
\draw  [fill={rgb, 255:red, 17; green, 16; blue, 16 }  ,fill opacity=1 ] (395.11,303.1) .. controls (395.12,304.13) and (395.96,304.96) .. (397,304.96) .. controls (398.03,304.95) and (398.86,304.1) .. (398.85,303.07) .. controls (398.84,302.04) and (398,301.22) .. (396.97,301.22) .. controls (395.93,301.23) and (395.1,302.07) .. (395.11,303.1) -- cycle ; 
\draw  [fill={rgb, 255:red, 17; green, 16; blue, 16 }  ,fill opacity=1 ] (441.3,304.6) .. controls (441.31,305.63) and (442.15,306.45) .. (443.18,306.45) .. controls (444.22,306.44) and (445.05,305.6) .. (445.04,304.57) .. controls (445.03,303.53) and (444.19,302.71) .. (443.15,302.71) .. controls (442.12,302.72) and (441.29,303.56) .. (441.3,304.6) -- cycle ;
\draw    (356.42,375.86) -- (377.18,375.71) ;
\draw   (373.31,372.77) -- (378.16,375.64) -- (373.37,378.66) ;
\draw    (442.37,376.05) .. controls (422.37,381.89) and (416.22,368.35) .. (396.18,374.56) ;
\draw  [fill={rgb, 255:red, 17; green, 16; blue, 16 }  ,fill opacity=1 ] (394.31,374.57) .. controls (394.32,375.6) and (395.16,376.43) .. (396.2,376.42) .. controls (397.23,376.41) and (398.06,375.57) .. (398.05,374.54) .. controls (398.04,373.51) and (397.2,372.68) .. (396.17,372.69) .. controls (395.13,372.7) and (394.3,373.54) .. (394.31,374.57) -- cycle ;
\draw  [fill={rgb, 255:red, 17; green, 16; blue, 16 }  ,fill opacity=1 ] (440.5,376.06) .. controls (440.51,377.09) and (441.35,377.92) .. (442.38,377.91) .. controls (443.42,377.9) and (444.25,377.06) .. (444.24,376.03) .. controls (444.23,375) and (443.39,374.17) .. (442.35,374.18) .. controls (441.32,374.19) and (440.49,375.03) .. (440.5,376.06) -- cycle ;
\draw    (292.29,299.06) .. controls (309.42,307.96) and (320.85,329.31) .. (333.77,316.83) ;
\draw    (308.09,293.92) .. controls (302.79,301.48) and (301.79,309.33) .. (292.79,319.02) ;
\draw    (308.09,293.92) .. controls (329.51,265.73) and (350.9,297.28) .. (333.77,316.83) ;
\draw    (174.86,288.46) .. controls (224.52,259.27) and (260.62,345.8) .. (221.18,332.3) ;
\draw    (207.92,280.85) .. controls (188.85,298.39) and (219.18,316.75) .. (201.42,330.39) ;
\draw    (234.38,279.08) .. controls (240.28,288) and (240.09,301.01) .. (230.33,305.2) ;
\draw    (201.42,330.39) .. controls (193.68,336.52) and (173.2,329.53) .. (178.95,301.85) ;
\draw  [fill={rgb, 255:red, 17; green, 16; blue, 16 }  ,fill opacity=1 ] (177.08,304.5) .. controls (177.09,305.53) and (177.93,306.36) .. (178.96,306.35) .. controls (180,306.34) and (180.82,305.5) .. (180.82,304.47) .. controls (180.81,303.44) and (179.96,302.61) .. (178.93,302.62) .. controls (177.9,302.63) and (177.07,303.47) .. (177.08,304.5) -- cycle ;
\draw  [fill={rgb, 255:red, 17; green, 16; blue, 16 }  ,fill opacity=1 ] (172.99,288.85) .. controls (173,289.88) and (173.84,290.71) .. (174.88,290.7) .. controls (175.91,290.69) and (176.74,289.85) .. (176.73,288.82) .. controls (176.72,287.79) and (175.88,286.96) .. (174.85,286.97) .. controls (173.81,286.98) and (172.98,287.82) .. (172.99,288.85) -- cycle ;
\draw   (226.06,304.06) .. controls (226.06,301.07) and (228.48,298.64) .. (231.48,298.64) .. controls (234.47,298.64) and (236.89,301.07) .. (236.89,304.06) .. controls (236.89,307.05) and (234.47,309.48) .. (231.48,309.48) .. controls (228.48,309.48) and (226.06,307.05) .. (226.06,304.06) -- cycle ;
\draw    (230.33,305.2) .. controls (219.13,310.4) and (205.59,326.69) .. (221.18,332.3) ;
\draw    (207.92,280.85) .. controls (220.08,269.88) and (230.12,272.46) .. (234.38,279.08) ;
\draw  [fill={rgb, 255:red, 17; green, 16; blue, 16 }  ,fill opacity=1 ] (290.42,299.08) .. controls (290.43,300.11) and (291.27,300.94) .. (292.31,300.93) .. controls (293.34,300.92) and (294.17,300.08) .. (294.16,299.05) .. controls (294.15,298.02) and (293.31,297.19) .. (292.28,297.2) .. controls (291.24,297.2) and (290.41,298.05) .. (290.42,299.08) -- cycle ;
\draw  [fill={rgb, 255:red, 17; green, 16; blue, 16 }  ,fill opacity=1 ] (290.92,318.9) .. controls (290.93,319.93) and (291.77,320.76) .. (292.81,320.76) .. controls (293.84,320.75) and (294.67,319.9) .. (294.66,318.87) .. controls (294.65,317.84) and (293.81,317.02) .. (292.78,317.02) .. controls (291.74,317.03) and (290.91,317.87) .. (290.92,318.9) -- cycle ;
\draw    (292.29,371.06) .. controls (309.42,379.96) and (320.85,401.31) .. (333.77,388.83) ;
\draw    (308.09,365.92) .. controls (302.79,373.48) and (301.79,381.33) .. (292.79,391.02) ;
\draw    (308.09,365.92) .. controls (329.51,337.73) and (350.9,369.28) .. (333.77,388.83) ;
\draw  [fill={rgb, 255:red, 17; green, 16; blue, 16 }  ,fill opacity=1 ] (290.42,371.08) .. controls (290.43,372.11) and (291.27,372.94) .. (292.31,372.93) .. controls (293.34,372.92) and (294.17,372.08) .. (294.16,371.05) .. controls (294.15,370.02) and (293.31,369.19) .. (292.28,369.2) .. controls (291.24,369.2) and (290.41,370.05) .. (290.42,371.08) -- cycle ;
\draw  [fill={rgb, 255:red, 17; green, 16; blue, 16 }  ,fill opacity=1 ] (290.92,390.9) .. controls (290.93,391.93) and (291.77,392.76) .. (292.81,392.76) .. controls (293.84,392.75) and (294.67,391.9) .. (294.66,390.87) .. controls (294.65,389.84) and (293.81,389.02) .. (292.78,389.02) .. controls (291.74,389.03) and (290.91,389.87) .. (290.92,390.9) -- cycle ;
\draw    (172.86,397.52) .. controls (222.52,426.71) and (264.34,343.28) .. (219.18,353.68) ;
\draw    (205.92,405.13) .. controls (186.85,387.59) and (217.18,369.23) .. (199.42,355.6) ;
\draw    (232.38,406.9) .. controls (238.28,397.98) and (238.09,384.97) .. (228.33,380.78) ;
\draw    (199.42,355.6) .. controls (191.68,349.46) and (171.2,356.45) .. (176.95,384.13) ;
\draw  [fill={rgb, 255:red, 17; green, 16; blue, 16 }  ,fill opacity=1 ] (175.08,381.48) .. controls (175.09,380.45) and (175.93,379.62) .. (176.96,379.63) .. controls (178,379.64) and (178.82,380.48) .. (178.82,381.51) .. controls (178.81,382.54) and (177.96,383.37) .. (176.93,383.36) .. controls (175.9,383.35) and (175.07,382.51) .. (175.08,381.48) -- cycle ;
\draw  [fill={rgb, 255:red, 17; green, 16; blue, 16 }  ,fill opacity=1 ] (170.99,397.13) .. controls (171,396.1) and (171.84,395.27) .. (172.88,395.28) .. controls (173.91,395.29) and (174.74,396.13) .. (174.73,397.16) .. controls (174.72,398.19) and (173.88,399.02) .. (172.85,399.01) .. controls (171.81,399.01) and (170.98,398.16) .. (170.99,397.13) -- cycle ;
\draw   (226.06,381.93) .. controls (226.06,384.92) and (228.48,387.34) .. (231.48,387.34) .. controls (234.47,387.34) and (236.89,384.92) .. (236.89,381.93) .. controls (236.89,378.93) and (234.47,376.51) .. (231.48,376.51) .. controls (228.48,376.51) and (226.06,378.93) .. (226.06,381.93) -- cycle ;
\draw    (228.33,380.78) .. controls (217.13,375.58) and (203.59,359.29) .. (219.18,353.68) ;
\draw    (205.92,405.13) .. controls (218.08,416.11) and (228.12,413.52) .. (232.38,406.9) ;
\draw (144,354.6) node [anchor=north west][inner sep=0.75pt]  [font=\normalsize]  {$\overline{D}_{{-}_{c_{2}}}$};
\draw (251.09,286.00) node [anchor=north west][inner sep=0.75pt]  [font=\footnotesize]  {$f\Omega_{1}^{v}$};
\draw (251.54,358.4) node [anchor=north west][inner sep=0.75pt]  [font=\footnotesize]  {$f\Omega_{1}^{v}$};
\draw (356.45,287.00) node [anchor=north west][inner sep=0.75pt]  [font=\footnotesize]  {$f\Omega_{1}$};
\draw (356.45,359.4) node [anchor=north west][inner sep=0.75pt]  [font=\footnotesize]  {$f\Omega_{1}$};
\draw (142.86,285.1) node [anchor=north west][inner sep=0.75pt]  [font=\normalsize]  {$\overline{D}_{{-}_{c_{1}}}$};
\end{tikzpicture}
\caption{Flat diagrams $\overline{D_{{-}_{c_1}}}$ and $\overline{D_{{-}_{c_2}}}$.} \label{fig26} 
\end{center}
\end{figure}

By Fig.~\ref{fig25} and $w_{D_+} (c_1) = w_{D_+} (c_2) = 1$, we get  
$$ 
\mathcal{F}(D_+) = w_{D_+}(c_{1}) \, [\overline{D_{{+}_{c_1}}}] + w_{D_+}(c_{2})  \, [\overline{D_{{+}_{c_2}}}]  
- w(D_{+}) \, [\overline{D_{+}}]   
= 2 \, [D_{0}] - 2 \, [\overline{D_+}].
$$ 
By Fig.~\ref{fig26} and $w_{D_-} (c_1)=-1$, $w_{D_-}(c_2) = 1$, we get 
$$ 
\mathcal{F}(D_-)  =  w_{D_-}(c_{1}) \, [\overline{D_{{-}_{c_1}}}] + w_{D_-}(c_{2}) \, [\overline{D_{{-}_{c_2}}} ]
- w(D_-) \, [\overline{D_-}] 
 =  [D_0] - [D_0] - 0 \cdot \, [\overline{D_-}]  = 0.
$$ 
Then,
$$ 
\mathcal F^{(1)}(D) = \mathcal{F}(D_+) - \mathcal{F}(D_-) = 2 \, [D_0] - 2 \, [\overline{D_+}]. 
$$
By \cite[Fig.~19]{GK},  $[\overline{D_+}] \neq [D_0]$. Hence, $\mathcal{F}^{(1)}(D) \neq 0$. Therefore, $\mathcal{F} ( K )$ is a Vassiliev invariant of order one. 
\end{proof}

Notice, that the same property holds for the 1-smoothing invariant $\mathcal L(K)$.

\begin{theorem} [\cite{Pet}] \label{th3.5} 
$\mathcal{L}(K)$ is a Vassiliev invariant of order one of virtual knotoids. 
\end{theorem}

\section{The gluing and 0-smoothing invariants} \label{section4}

\subsection{The gluing invariant} In~\cite{Hen}, Henrich presented a direct approach for a universal Vassiliev invariant of order one for virtual knots, which is called the ``gluing invariant'' and is denoted by $\mathcal{G}$. In~\cite{Pet}, Petit defined the gluing invariant for long virtual knots, that is naturally an invariant of virtual knotoids. 

\begin{definition} [\cite{Pet}] \label{def4.3} {\rm 
		Let $K$ be a virtual knotoid with diagram $D$. We denote the resulting diagram in Fig.~\ref{fig100} by $D^c_{\operatorname{glue}}$ and the flat equivalence class of flattening $\overline{D^c_{\operatorname{glue}}}$ by $[\overline{D^c_{\operatorname{glue}}}]$. We denote by $D^0_{\operatorname{sing}}$ the diagram obtained by creating a kink on $D$ (by some $\Omega_1$-move) and then creating a \textit{singular kink} by gluing two strands, see Fig.~\ref{fig101}. Denote by $[\overline{D^{0}_{\operatorname{sing}}}]$ the flat equivalence class of flattening $\overline{D^{0}_{\operatorname{sing}}}$ of $D^{0}_{\operatorname{sing}}$. Notice that the  flat equivalence class $[\overline{D^{0}_{\operatorname{sing}}}]$ is independent of where in the diagram the singular kink is  placed. Define $\mathcal{G}(D)$ by the following formula: 
		\begin{equation}
		\mathcal{G} (D) = \sum_{c} w_D (c) \, [\overline{D^c_{\operatorname{glue}}}] - w(D) \, [\overline{D^0_{\operatorname{sing}}}],  \label{eqn:defG}
		\end{equation}
		where the sum runs over all classical crossings of $D$, $w_D(c) = \operatorname{sgn}(c)$ and $\displaystyle w(D) = \sum_c w_D(c)$. 
	}
\end{definition}

\begin{figure}[!ht]
\begin{center}
\tikzset{every picture/.style={line width=1.0pt}}  
\begin{tikzpicture}[x=0.75pt,y=0.75pt,yscale=-1,xscale=1]
\draw    (477.08,568.99) -- (421.47,623.44) ;
\draw   (421.69,574.27) -- (420.99,568.92) -- (426.42,570) ;
\draw   (471.93,570.14) -- (477.31,568.87) -- (476.81,574.24) ;
\draw    (453.39,600.4) -- (478.8,624.6) ;
\draw    (194.32,568.59) -- (171.68,590.72) ;
\draw    (138.6,568.65) -- (194.43,622.98) ;
\draw   (139.21,573.87) -- (138.51,568.53) -- (143.93,569.6) ;
\draw   (189.17,569.74) -- (194.55,568.47) -- (194.05,573.84) ;
\draw    (161.64,600.64) -- (138.71,623.04) ;
\draw    (420.53,568.64) -- (444.61,591.98) ;
\draw    (338.32,568.63) -- (282.71,623.08) ;
\draw    (282.6,568.69) -- (338.43,623.02) ;
\draw   (283.22,573.91) -- (282.51,568.57) -- (287.94,569.64) ;
\draw   (333.17,569.78) -- (338.56,568.51) -- (338.06,573.88) ;
\draw  [fill={rgb, 255:red, 14; green, 13; blue, 13 }  ,fill opacity=1 ] (305.42,595.86) .. controls (305.42,593.16) and (307.7,590.97) .. (310.52,590.97) .. controls (313.34,590.97) and (315.62,593.16) .. (315.62,595.86) .. controls (315.62,598.55) and (313.34,600.74) .. (310.52,600.74) .. controls (307.7,600.74) and (305.42,598.55) .. (305.42,595.86) -- cycle ;
\draw    (395.67,593.89) -- (365.63,593.45) ;
\draw   (367.79,590.85) -- (364.94,593.39) -- (367.72,596) ;
\draw    (222.83,594.79) -- (252.88,594.35) ;
\draw   (250.72,591.75) -- (253.56,594.3) -- (250.79,596.9) ;
\draw (175.76,589.63) node [anchor=north west][inner sep=0.75pt]  [font=\large]  {$c$};
\draw (157.23,631.27) node [anchor=north west][inner sep=0.75pt]    {$D$};
\draw (439.6,631.27) node [anchor=north west][inner sep=0.75pt]    {$D$};
\draw (294.28,631.04) node [anchor=north west][inner sep=0.75pt]    {$D^c_{\operatorname{glue}}$};
\draw (458.76,589.63) node [anchor=north west][inner sep=0.75pt]  [font=\large]  {$c$};
\end{tikzpicture}
\caption{Gluing two strands in a classical crossing to form a singular crossing.} \label{fig100}
\end{center}
\end{figure}

\begin{figure}[!ht]
\begin{center}
\tikzset{every picture/.style={line width=1.0pt}}  
\begin{tikzpicture}[x=0.75pt,y=0.75pt,yscale=-1,xscale=1]
\draw    (336.04,178.26) .. controls (345.68,196.59) and (345.57,214.77) .. (335.97,232.89) ;
\draw    (286.93,178.08) -- (335.97,232.89) ;
\draw    (336.04,178.26) -- (314.94,201.05) ;
\draw    (173.77,177.48) -- (173.77,232.88) ;
\draw    (307.18,209.27) -- (286.37,231.73) ;
\draw    (222.77,206.58) -- (247.85,206.22) ;
\draw   (246.05,204.15) -- (248.42,206.18) -- (246.1,208.27) ;
\draw    (481.46,177.46) .. controls (491.1,195.79) and (490.98,213.97) .. (481.39,232.09) ;
\draw    (432.35,177.28) -- (481.39,232.09) ;
\draw    (481.46,177.46) -- (460.36,200.25) ;
\draw    (460.36,200.25) -- (431.79,230.93) ;
\draw    (378.19,205.78) -- (403.26,205.42) ;
\draw   (401.46,203.34) -- (403.84,205.38) -- (401.52,207.47) ;
\draw   (170.93,181.1) -- (173.74,177.14) -- (176.69,180.91) ;
\draw   (287.74,183.13) -- (287.12,178.37) -- (291.93,179.33) ;
\draw   (432.55,181.63) -- (431.92,176.88) -- (436.73,177.83) ;
\draw  [fill={rgb, 255:red, 14; green, 13; blue, 13 }  ,fill opacity=1 ] (451.77,204.68) .. controls (451.77,201.99) and (454.05,199.8) .. (456.87,199.8) .. controls (459.69,199.8) and (461.97,201.99) .. (461.97,204.68) .. controls (461.97,207.38) and (459.69,209.57) .. (456.87,209.57) .. controls (454.05,209.57) and (451.77,207.38) .. (451.77,204.68) -- cycle ;
\draw (166.59,238.54) node [anchor=north west][inner sep=0.75pt]    {$D$};
\draw (282.23,240.54) node [anchor=north west][inner sep=0.75pt]    {kink on $D$};
\draw (445.4,236.04) node [anchor=north west][inner sep=0.75pt]    {$D^0_{\operatorname{sing}}$};
\end{tikzpicture}
\caption{Gluing in a kink crossing.} \label{fig101}
\end{center}
\end{figure} 

\begin{theorem} [\cite{Pet}] \label{th4.1}
$\mathcal{G}(D)$ is the universal Vassiliev invariant of order one for virtual knotoids.
\end{theorem}

\begin{remark}
It has been shown in \cite{Pet} that $\mathcal{G}(K)$ is strictly stronger than $\mathcal{L}(K)$. 
\end{remark}

It seems typical that smoothing of a virtual knotoid diagram at a classical crossing gives less information than gluing of it. The following result demonstrate that  the gluing invariant $\mathcal{G}(K)$ is stronger than $0$-smoothing invariant $\mathcal{F}(K)$. 

\begin{theorem}\label{th4.2}
Invariant $\mathcal{G}(K)$ is strictly stronger than invariant $\mathcal{F} (K)$. 
\end{theorem} 

The proof of Theorem~\ref{th4.2} uses the extension of the singular based matrix invariant which we recall in the following subsection. 

\subsection{Singular based matrices}

In \cite{Tur2}, Turaev introduced the notion of virtual strings and their based matrices. 
We recall that for an integer $m \geq 0$ a \textit{virtual string $\alpha$ of rank $m$} (or briefly a \textit{string}) is an oriented circle, $S$, called the \textit{core circle} of $\alpha$, and a distinguished set of $2m$ distinct points of $S$ partitioned into $m$ ordered pairs. These $m$ ordered pairs of points are said to be the \textit{arrows} of $\alpha$.  The set of arrows of $\alpha$ is denoted by $\operatorname{arr} (\alpha)$. The endpoints $a, b \in S$ of the arrow $(a,b) \in \operatorname{arr}(\alpha)$ are called its \textit{tail} and \textit{head}, respectively. The $2m$ distinguished points of $S$ are called the \textit{endpoints} of $\alpha$. The string formed by an oriented circle and an empty set of arrows is called a \textit{trivial virtual string}. Two virtual strings are \textit{homeomorphic} if there is an orientation-preserving homeomorphism of the core circle, transforming the set of arrows of the first string into the set of arrows of the second string. The homeomorphism classes of virtual strings will be also called virtual strings. 

Furthermore, Henrich introduced the notion of singular virtual strings and singular based matrices and their homology classes~\cite{Hen}. To each singular virtual string, Henrich associated one of these matrices. She demonstrated that if two singular virtual strings are homotopic, then their associated singular based matrices are homologous. To make our paper self-contained and to facilitate the reader, we present the necessary definitions and conclusions below.

Analogously to~\cite{Tur2, Hen}, we introduce virtual open strings, corresponding to flat virtual knotoids, is a counterclockwise oriented arc with arrows that are not signed, and the direction of the arrow has a new meaning. To be more precise, let us give the following definitions.

\begin{definition} {\rm 
\begin{itemize}
\item[(i)] For an integer $m\geq 0$, a \textit{virtual open string} $\alpha$ of rank $m$ is an oriented arc $S$, called the \textit{core arc} of $\alpha$, and a distinguished set of $2m$ distinct points of $S$ partitioned into $m$ ordered pairs. The starting point of $S$ is called the \textit{tail} of $\alpha$, and the end point of $S$ is called the \textit{head} of $\alpha$. 
\item[(ii)] These $m$ ordered pairs of points are called \textit{arrows} of $\alpha$, and the collection of all arrows of $\alpha$ is denoted by $\operatorname{arr}(\alpha)$. 
\item[(iii)] The endpoints $a, b\in S$ of an arrow $(a, b) \in \operatorname{arr}(\alpha)$ are called its \textit{tail} and \textit{head}, respectively.  
\item[(iv)] Two virtual open strings are said to be \textit{homeomorphic} if there is an orientation-preserving homeomorphism of the core arc, transforming the set of arrows of the first open string into the set of arrows of the second open string. The homeomorphism classes of virtual open strings will be also called virtual open strings.
\end{itemize}
 }
\end{definition}

Given a virtual open string $\alpha$, we can realize the corresponding flat virtual knotoid diagram as follows. Each arrow in the open string is associated with a crossing in the knotoid diagram. Passing through the head of an arrow while traveling along the oriented core arc $S$ is equivalent to passing through the strand of a crossing indicated by the letter $b$ in the diagram below, see Fig.~\ref{fig127}. Passing through the tail of an arrow while traveling along the core arc $S$ is equivalent to passing through a crossing on the strand $a$, see  Fig.~\ref{fig127}.
\begin{figure}[htbp]
\begin{center}
\tikzset{every picture/.style={line width=1.0pt}}  
\begin{tikzpicture}[x=0.75pt,y=0.75pt,yscale=-0.9, xscale=0.9] 
\draw    (147.86,21.77) -- (79.76,92.61) ;
\draw    (79.62,21.85) -- (148,92.53) ;
\draw   (80.37,28.64) -- (79.51,21.68) -- (86.16,23.08) ;
\draw    (314.66,56.78) -- (234.76,56.32) ;
\draw   (309.96,53.6) -- (315.34,56.76) -- (309.72,59.44) ;
\draw    (245.86,73.74) .. controls (231.21,65.76) and (230.64,46.99) .. (245.67,39.12) ;
\draw   (141.56,23.27) -- (148.15,21.61) -- (147.54,28.6) ;
\draw    (304.95,39.66) .. controls (319.51,47.83) and (319.86,66.59) .. (304.74,74.29) ;
\draw (147.93,9.43) node [anchor=north west][inner sep=0.75pt]  [font=\small]  {$a$};
\draw (69.84,8.67) node [anchor=north west][inner sep=0.75pt]  [font=\small]  {$b$};
\draw (218.27,48.5) node [anchor=north west][inner sep=0.75pt]  [font=\small]  {$a$};
\draw (321.49,46.97) node [anchor=north west][inner sep=0.75pt]  [font=\small]  {$b$};
\end{tikzpicture}
\caption{A flat crossing and its corresponding arrow $(a,b)$.\label{fig127}}
\end{center}
\end{figure}

There are three types of moves (i)-(iii) on virtual open strings. 
\begin{itemize}
\item[(i)] Adding an arrow $(a, b)$ to $\operatorname{arr}(\alpha)$, where $a, b \in S$ are two points such that the arc $ab$ is disjoint from $\operatorname{arr}(\alpha)$.
\item[(ii)] Adding two new arrows, $(a, b)$ and $(b',a')$, where $a, b, a', b' \in S$ are four points satisfy that two arcs on $S$ that are disjoint from $\operatorname{arr}(\alpha)$, the first with endpoints $a$ and $a'$ and the second with endpoints $b$ and $b'$.
\item[(iii)] Replacing $(a',b), (b',c), (c',a)$ with $(a, b'), (b, c'), (c, a')$, where $a, b, a', b', c, c' \in S$ are points satisfy that $(a',b), (b',c), (c',a)$ are arrows of $\alpha$ and the arcs $aa'$, $bb'$, and $cc'$ are all disjoint from other arrows.
\end{itemize}
Two virtual open strings are said to be \textit{homotopic} if they are related by a finite sequence of moves, where each move is (i), (ii), or (iii).

The flat singular virtual knotoids with one singular crossing can be viewed as virtual open strings where one arrow is designated as the preferred arrow, this can be pictured by using a thickened arrow. We will refer to these modified virtual open strings as \textit{singular virtual open strings}. Naturally, the preferred arrow in a singular virtual open string corresponds to the singular crossing in the flat singular virtual knotoid diagram.

We add a singular virtual open string move that allows us to change which arrow is the preferred arrow in the diagram.  
\begin{itemize}
\item[(s-ii)] Suppose $(a,b)$ is the preferred arrow in the diagram and let $(a',b')$ be an arrow in the diagram such that the interior of one of the arcs in the core circle $S$ with endpoints $a$ and $b'$ is disjoint from arr($\alpha$) and the interior of one of the arcs in $S$ with endpoints $a'$ and $b$ is disjoint from arr($\alpha$). Then the designation as the preferred arrow may switch from $(a,b)$ to $(a',b')$. 
\end{itemize}

Note that we still allow moves (i)-(iii) whenever they involve only ordinary arrows. We also allow move (iii), even if one of the arrows is the preferred one. The homotopy equivalence relation that results from this collection of moves corresponds precisely to the equivalence relation on flat singular virtual knotoids with one singular crossing.

In~\cite{Hen}, Henrich introduced singular based matrices for singular virtual strings and their homology classes. She  showed that if two virtual strings are homotopic, then their associated based matrices are homologous. To help distinguish equivalence classes of flat singular virtual knotoids with one singular crossing, we use singular based matrix for singular virtual open strings. Below, we recall necessary definitions and fundamental results about singular based matrices. 

\begin{definition} [\cite{Hen}] {\rm 
A \textit{singular based matrix}, or shortly SBM, is a quadruple $(G, s, d, {\bf b} : G \times G \to H)$, where $H$ is an abelian group, $G$ is a finite set with $s, d\in G$, and the map ${\bf b}$ is skew-symmetric, i.e. ${\bf b}(g,h) = -{\bf b}(h,g)$ for all $g, h \in G$.
} 
\end{definition}

An element $g \in G - \{s, d\}$ is \textit{annihilating} if ${\bf b}(g, h) = 0$ for all $h\in G$. An element $g\in G - \left\{s, d\right\}$ is a \textit{core element} if ${\bf b}(g, h) = {\bf b}(s, h)$ for all $h\in G$. Two elements $g_1, g_2\in G - \left\{s, d\right\}$ are \textit{complementary} if ${\bf b}(g_1, h) + {\bf b}(g_2, h) = {\bf b}(s, h)$ for all $h\in G$. We call a distinguished element $g\in \left\{s, d\right\}$  \textit{annihilating-like} if ${\bf b}(g, h) = 0$ for all $h\in G$. We call $d$ \textit{core-like} if ${\bf b}(d, h) = {\bf b}(s, h)$ for all $h\in G$.

In~\cite{Tur2} Turaev defined elementary extension operations for based matrices, and in~\cite{Hen} Henrich defined analogous operations $\widetilde{M}_1, \widetilde{M}_2, \widetilde{M}_3$ and $N$ which transform SBM into another SBM as follows. 
 
\begin{itemize}
\item $\widetilde{M}_1$ transforms $(G, s, d, {\bf b} : G \times G \to H)$ into ($G_1 = G \sqcup {g}, s, d, {\bf b}_1 : G_1 \times G_1 \to H$) such that ${\bf b}_1$ extends ${\bf b}$ and ${\bf b}_1(g, h) = 0$ for all $h \in G_1$. 
\item $\widetilde{M}_2$ transforms $(G, s, d, {\bf b} : G \times G \to H)$ into ($G_2 = G \sqcup {g}, s, d, {\bf b}_2 : G_2 \times G_2 \to H$) such that ${\bf b}_2$ extends ${\bf b}$ and ${\bf b}_2(g, h) = {\bf b}_2(s, h)$ for all $h\in G_2.$ 
\item $\widetilde{M}_3$ transforms $(G, s, d, {\bf b} : G \times G \to H)$ into  ($G_3 = G \sqcup \{g_i, g_j\}, s, d, {\bf b}_3 : G_3 \times G_3 \to. H)$ such that ${\bf b}_3$ is any skew-symmetric map extending ${\bf b}$ with ${\bf b}_3(g_i, h) + {\bf b}_3(g_j, h) = {\bf b}_3(s, h)$ for all $h\in G_3$. 
\item Suppose there exists an element $g\in G$ such that ${\bf b}(g, h) + {\bf b}(d, h) = {\bf b}(s, h)$ for
all $h\in G$ (i.e. $g$ and $d$ are complementary), then $N$ transforms $(G, s, d, {\bf b})$ into $(G, s, g, {\bf b})$. In effect, the roles of $g$ and $d$ are switched.
\end{itemize}
Operations  $\widetilde{M}_1, \widetilde{M}_2, \widetilde{M}_3$ are called \textit{elementary extensions}  and $N$ is called a \textit{singularity switch}. We denote by $\widetilde{M}_i^{-1}$ the inverse operation for $\widetilde{M_i}$, $i=1,2,3$. 

\begin{definition} \cite[Def.~4.9]{Hen} \label{def:primitive} 
{\rm 
\begin{itemize}
\item[(i)] Two SBMs $(G, s, d, {\bf b})$ and $(G', s', d', {\bf b}')$ are said to be \textit{isomorphic} if there is a bijection $G\to G'$ sending $s$ to $s'$, $d$ to $d'$ and transforming ${\bf b}$ into ${\bf b}'$. 
\item[(ii)] An SBM $(G, s, d, {\bf b})$ is \textit{primitive} if it cannot be obtained from another SBM by an elementary extension, even after applications of the singularity switch operation. 
\item[(iii)] Two SBMs are \textit{homologous} if one can be obtained from the other by a finite number of moves, where each of moves is either $\widetilde{M}_1^{\pm 1}, \widetilde{M}_2^{\pm 1}, \widetilde{M}_3^{\pm 1}$ or $N$.
\end{itemize}
}
\end{definition}

\begin{theorem}\cite[Th.~4.12]{Hen}\label{th5.1}
Given two homologous primitive SBMs, the second can be obtained from the first by an isomorphism or a composition of an isomorphism with a single $\widetilde{M_1}^{-1} \circ N \circ \widetilde{M_2}$, $\widetilde{M_2}^{-1} \circ N \circ \widetilde{M_1}$ or $N$ move. 
\end{theorem}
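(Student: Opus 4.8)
The plan is to mirror Turaev's proof that two homologous primitive based matrices are isomorphic~\cite{Tur2}, re-running his confluence analysis for the enriched data $(G,s,d,{\bf b})$ while carefully tracking the distinguished element $d$ and the singularity switch $N$. The guiding principle is that any homology between the two given primitive SBMs is a word in the alphabet $\{\widetilde{M}_1^{\pm1},\widetilde{M}_2^{\pm1},\widetilde{M}_3^{\pm1},N\}$, and the goal is to reduce every such word to a normal form from which the three listed possibilities can be read off by comparing endpoints.

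First I would establish the commutation and cancellation lemmas for the operations. Extensions acting on disjoint added elements commute, an extension immediately followed by the inverse extension deleting the same element is the identity up to isomorphism, and an extension commutes with an inverse extension acting on unrelated elements; these are essentially verbatim from Turaev. The genuinely new bookkeeping concerns $N$: since $N$ fixes $G$, $s$ and ${\bf b}$ and only reassigns the distinguished element along a complementary pair, it commutes with every $\widetilde{M}_i^{\pm1}$ whose element is unrelated to that pair. Using these relations I would push all forward extensions to the front of the word and all inverse extensions to the back, so that the homology factors as $P\rightsquigarrow Q\rightsquigarrow P'$, where the peak $Q$ is reached from $P$ by extensions and $N$-moves and $P'$ is reached from $Q$ by inverse extensions and $N$-moves.

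Primitivity then pins down the peak. Forgetting $d$, the moves project to Turaev's $M_i^{\pm1}$ acting on the underlying based matrix $(G,s,{\bf b})$, and $N$ projects to the identity; invoking Turaev's confluence, the extensions at $Q$ must cancel against the inverse extensions in pairs, up to isomorphism, \emph{except} for the extensions and inverse extensions that create or delete an element which is degenerate only by virtue of being tied to the distinguished element. Such elements are invisible to the Turaev-primitivity of the underlying matrix but are real in the singular theory, and all residual data is carried by $d$. Tracing the surviving cases gives exactly three outcomes: a single $N$ (a switch between two complementary distinguished elements, both of which yield primitive SBMs); the composition $\widetilde{M}_2^{-1}\circ N\circ\widetilde{M}_1$, which adjoins an annihilating element $g$, switches the singularity onto $g$ (possible precisely because a core-like $d$ is complementary to any annihilating $g$, as ${\bf b}(g,h)+{\bf b}(d,h)={\bf b}(s,h)$), and then deletes the now-core element $d$, thereby re-presenting a core-like $d$ as an annihilating-like one; and the mirror composition $\widetilde{M}_1^{-1}\circ N\circ\widetilde{M}_2$, which reverses this.

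The step I expect to be the main obstacle is the confluence analysis in the presence of $N$ used to pin down the peak. Unlike Turaev's setting, a reduction of $N(T)$ need not lift to a reduction of $T$, because switching the distinguished element can turn a non-removable element — namely $d$ itself, or an element complementary to $d$ — into an annihilating or core element that some $\widetilde{M}_i^{-1}$ can then delete. Establishing that every such $N$-enabled reduction contributes nothing beyond the three compositions above requires a careful case analysis of how a degenerate $d$ can be presented, and it is exactly here that the singular theory departs from Turaev's. Once this case analysis is complete, comparing the two primitive endpoints of the normalized homology yields the theorem.
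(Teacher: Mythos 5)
The paper does not prove this statement: it is quoted directly from Henrich (Theorem~4.12 of [Hen]) and used as a black box in the proof of Lemma~4.2, so there is no in-paper proof to compare your attempt against. Judged on its own terms, your outline does reconstruct the strategy behind Henrich's actual argument --- adapt Turaev's uniqueness-of-the-primitive-core argument for based matrices, and isolate the one new phenomenon, namely that a singularity switch $N$ can turn the distinguished element $d$ (or an element complementary to it) into an annihilating or core element that a subsequent $\widetilde{M}_i^{-1}$ can delete. Your reading of the two compositions is also correct: $\widetilde{M}_2^{-1}\circ N\circ\widetilde{M}_1$ re-presents a core-like $d$ as an annihilating-like one (the adjoined annihilating $g$ is automatically complementary to a core-like $d$, since ${\bf b}(g,h)+{\bf b}(d,h)={\bf b}(s,h)$), and $\widetilde{M}_1^{-1}\circ N\circ\widetilde{M}_2$ reverses this.

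As a proof, however, the proposal has a genuine gap, and you flag it yourself: the entire mathematical content of the theorem is the confluence analysis in the presence of $N$, and you defer it rather than carry it out. Everything you do establish (valley-to-peak normalization, cancellation of paired extensions, the interpretation of the three moves) is the routine part; the claim that \emph{every} $N$-enabled reduction at the peak contributes nothing beyond an isomorphism composed with a single $N$, $\widetilde{M}_1^{-1}\circ N\circ\widetilde{M}_2$ or $\widetilde{M}_2^{-1}\circ N\circ\widetilde{M}_1$ --- in particular that two such moves can never be forced, and that the projection-to-Turaev argument really pins down the peak even though SBM-primitivity is strictly weaker than primitivity of the underlying based matrix (a primitive SBM may have $d$ annihilating-like, so its underlying based matrix is \emph{not} primitive) --- is exactly what is asserted and exactly what is not proved. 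A smaller but real problem: your commutation lemmas are not all ``essentially verbatim from Turaev.'' The move $N$ need not commute past $\widetilde{M}_3^{\pm1}$ even on seemingly unrelated elements, because $\widetilde{M}_3$ permits an \emph{arbitrary} skew-symmetric extension of ${\bf b}$ on the new pair, and such an extension can destroy the complementarity of $(g,d)$ that the $N$ move requires; so the normalization of an arbitrary homology into a single peak with controlled $N$-moves needs an argument, not a citation. As it stands the proposal is a plausible plan for reproving Henrich's theorem, not a proof of it.
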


Now we use theory of SBMs to define an invariant for singular virtual open strings. Given a singular virtual open string $\alpha$ with preferred arrow $(a,b)$, we let $G = G(\alpha) = \Big\{\operatorname{arr}(\alpha) \sqcup \{s\} \Big\}$, where $d$ in $(G, s, d, {\bf b})$ is precisely the preferred arrow $(a,b)$. The map ${\bf b} = {\bf b}(\alpha): G \times G \to \mathbb Z$ is defined according to the following Rule~1 and Rule~2. 

\underline{Rule~1.} For $e \in \operatorname{arr}(\alpha)$, ${\bf b}(e, s)$ is obtained by 1-smoothing the flat virtual knotoid diagram associated to $\alpha$ at the crossing corresponding to $e$. The 1-smoothing produces an oriented virtual multi-knotoid diagram, which is a knotoid diagram with one circular component. We choose an ordering $(\ell_1, \ell_2)$ for the two component. Taking the flat virtual multi-knotoid diagram that is the flattening of the smoothed virtual knotoid diagram, we give signs to each flat crossing in $\ell_1 \cap \ell_2$ as shown in Fig.~\ref{2.10}, then the intersection index is $\displaystyle i(e)=\sum_{c \in \ell_1\cap \ell_2} \operatorname{sgn}(x)$ and we define ${\bf b}(e,s) = i(e)$. 

\begin{figure}[htbp]
	\begin{center}
		\tikzset{every picture/.style={line width=1.0pt}} 
		\begin{tikzpicture}[x=0.75pt,y=0.75pt,yscale=-1,xscale=1]
			\draw    (210.67,2337.8) -- (149.85,2400.22) ; 
			\draw    (149.73,2337.87) -- (210.79,2400.15) ;
			\draw   (150.4,2343.85) -- (149.63,2337.72) -- (155.56,2338.96) ;
			\draw   (205.04,2339.12) -- (210.93,2337.66) -- (210.38,2343.82) ;
			\draw    (346.2,2338.42) -- (285.38,2400.84) ;
			\draw    (285.25,2338.49) -- (346.32,2400.77) ;
			\draw   (285.92,2344.47) -- (285.15,2338.34) -- (291.09,2339.58) ;
			\draw   (340.57,2339.74) -- (346.46,2338.28) -- (345.91,2344.44) ;
			\draw (134.27,2320.64) node [anchor=north west][inner sep=0.75pt]  [font=\normalsize]  {$\ell_2$};
			\draw (210,2321.26) node [anchor=north west][inner sep=0.75pt]  [font=\normalsize]  {$\ell_1$};
			\draw (347.36,2320.64) node [anchor=north west][inner sep=0.75pt]  [font=\normalsize]  {$\ell_2$};
			\draw (270.41,2321.26) node [anchor=north west][inner sep=0.75pt]  [font=\normalsize]  {$\ell_1$};
			\draw (195.76,2365) node [anchor=north west][inner sep=0.75pt]    {$+$};
			\draw (284.46,2365) node [anchor=north west][inner sep=0.75pt]    {$-$};	
		\end{tikzpicture}
		\caption{The $\operatorname{sgn}(c)$ for $c \in \ell_1 \cap \ell_2$.} \label{2.10}
	\end{center}
\end{figure}

\underline{Rule~2.} For $e, f \in \operatorname{arr} (\alpha)$ we define ${\bf b}(e,f)$ as follows. Suppose $e = (a,b)$ and $f = (c,d)$. Let $(ab)^{\circ}$ be the interior of the arc $ab$ and let $(cd)^{\circ}$ be the interior of the arc $cd$. Define $ab \cdot cd$ as the number of arrows with tails in $(ab)^{\circ}$ and heads in $(cd)^{\circ}$ minus the number of arrows with tails in $(cd)^{\circ}$ and heads in $(ab)^{\circ}$. (It should be noted that arc $ab$ (or $cd$) can also be disconnected, that is to say, the tail and the head of $\alpha$ are located between the arc $ab$ (or $cd$)). We define $\epsilon (e,f) = 1$ (respectively $\epsilon(e,f) = -1$ and $\epsilon (e,f) = 0$) is $e$ and $f$ to be linked positively (respectively, negatively and unlinked) as illustrated in Fig.~\ref{fig29}. Then we define ${\bf b}(e,f) = ab \cdot cd +\epsilon(e,f)$. 
\begin{figure}[htbp]
\begin{center}
\tikzset{every picture/.style={line width=1.0pt}}  
\begin{tikzpicture}[x=0.75pt,y=0.75pt,yscale=-1.0,xscale=1.0]  
\draw  [draw opacity=0] (154.11,318.29) .. controls (149.85,319.86) and (145.24,320.72) .. (140.44,320.72) .. controls (118.51,320.72) and (100.73,302.83) .. (100.73,280.76) .. controls (100.73,258.69) and (118.51,240.8) .. (140.44,240.8) .. controls (162.38,240.8) and (180.16,258.69) .. (180.16,280.76) .. controls (180.16,286.17) and (179.09,291.33) .. (177.16,296.03) -- (140.44,280.76) -- cycle ; \draw   (154.11,318.29) .. controls (149.85,319.86) and (145.24,320.72) .. (140.44,320.72) .. controls (118.51,320.72) and (100.73,302.83) .. (100.73,280.76) .. controls (100.73,258.69) and (118.51,240.8) .. (140.44,240.8) .. controls (162.38,240.8) and (180.16,258.69) .. (180.16,280.76) .. controls (180.16,286.17) and (179.09,291.33) .. (177.16,296.03) ;  
\draw    (100.81,279.99) -- (180.13,279.99) ;
\draw    (139.43,241.13) -- (139.58,320.42) ;
\draw   (173.93,277.02) -- (179.79,279.96) -- (173.93,282.91) ;
\draw    (215.36,278.23) -- (294.68,278.23) ;
\draw    (253.98,239.37) -- (254.13,318.66) ;
\draw   (288.48,275.27) -- (294.34,278.21) -- (288.48,281.15) ;
\draw   (257.13,312.74) -- (254.12,318.6) -- (251.27,312.66) ;
\draw   (136.51,247.12) -- (139.42,241.21) -- (142.37,247.1) ;
\draw    (369.03,320.47) -- (368.99,240.65) ;
\draw    (359.25,241.32) -- (359.37,319.83) ;
\draw   (366.04,247.48) -- (368.97,241.58) -- (371.9,247.48) ;
\draw   (362.37,312.71) -- (359.36,318.57) -- (356.52,312.63) ;
\draw  [fill={rgb, 255:red, 17; green, 16; blue, 16 }  ,fill opacity=1 ] (151.63,318.31) .. controls (151.65,319.68) and (152.76,320.78) .. (154.13,320.77) .. controls (155.49,320.76) and (156.59,319.64) .. (156.58,318.27) .. controls (156.57,316.9) and (155.45,315.8) .. (154.09,315.81) .. controls (152.72,315.82) and (151.62,316.94) .. (151.63,318.31) -- cycle ;
\draw  [fill={rgb, 255:red, 17; green, 16; blue, 16 }  ,fill opacity=1 ] (174.68,296.05) .. controls (174.7,297.42) and (175.81,298.52) .. (177.18,298.51) .. controls (178.54,298.5) and (179.64,297.38) .. (179.63,296.01) .. controls (179.62,294.64) and (178.5,293.54) .. (177.14,293.55) .. controls (175.77,293.56) and (174.67,294.68) .. (174.68,296.05) -- cycle ;
\draw  [draw opacity=0] (269.11,316.54) .. controls (264.85,318.11) and (260.24,318.97) .. (255.44,318.97) .. controls (233.51,318.97) and (215.73,301.08) .. (215.73,279.01) .. controls (215.73,256.94) and (233.51,239.05) .. (255.44,239.05) .. controls (277.38,239.05) and (295.16,256.94) .. (295.16,279.01) .. controls (295.16,284.42) and (294.09,289.58) .. (292.16,294.28) -- (255.44,279.01) -- cycle ; \draw   (269.11,316.54) .. controls (264.85,318.11) and (260.24,318.97) .. (255.44,318.97) .. controls (233.51,318.97) and (215.73,301.08) .. (215.73,279.01) .. controls (215.73,256.94) and (233.51,239.05) .. (255.44,239.05) .. controls (277.38,239.05) and (295.16,256.94) .. (295.16,279.01) .. controls (295.16,284.42) and (294.09,289.58) .. (292.16,294.28) ;  
\draw  [fill={rgb, 255:red, 17; green, 16; blue, 16 }  ,fill opacity=1 ] (266.63,316.56) .. controls (266.65,317.93) and (267.76,319.03) .. (269.13,319.02) .. controls (270.49,319.01) and (271.59,317.89) .. (271.58,316.52) .. controls (271.57,315.15) and (270.45,314.05) .. (269.09,314.06) .. controls (267.72,314.07) and (266.62,315.19) .. (266.63,316.56) -- cycle ;
\draw  [fill={rgb, 255:red, 17; green, 16; blue, 16 }  ,fill opacity=1 ] (289.68,294.3) .. controls (289.7,295.67) and (290.81,296.77) .. (292.18,296.76) .. controls (293.54,296.75) and (294.64,295.63) .. (294.63,294.26) .. controls (294.62,292.89) and (293.5,291.79) .. (292.14,291.8) .. controls (290.77,291.81) and (289.67,292.93) .. (289.68,294.3) -- cycle ;
\draw  [draw opacity=0] (382.76,317.74) .. controls (378.5,319.31) and (373.9,320.17) .. (369.1,320.17) .. controls (347.16,320.17) and (329.38,302.28) .. (329.38,280.21) .. controls (329.38,258.14) and (347.16,240.25) .. (369.1,240.25) .. controls (391.03,240.25) and (408.81,258.14) .. (408.81,280.21) .. controls (408.81,285.62) and (407.74,290.78) .. (405.81,295.48) -- (369.1,280.21) -- cycle ; \draw   (382.76,317.74) .. controls (378.5,319.31) and (373.9,320.17) .. (369.1,320.17) .. controls (347.16,320.17) and (329.38,302.28) .. (329.38,280.21) .. controls (329.38,258.14) and (347.16,240.25) .. (369.1,240.25) .. controls (391.03,240.25) and (408.81,258.14) .. (408.81,280.21) .. controls (408.81,285.62) and (407.74,290.78) .. (405.81,295.48) ;  
\draw  [fill={rgb, 255:red, 17; green, 16; blue, 16 }  ,fill opacity=1 ] (380.29,317.76) .. controls (380.3,319.13) and (381.41,320.23) .. (382.78,320.22) .. controls (384.14,320.21) and (385.24,319.09) .. (385.23,317.72) .. controls (385.22,316.35) and (384.1,315.25) .. (382.74,315.26) .. controls (381.37,315.27) and (380.28,316.39) .. (380.29,317.76) -- cycle ;
\draw  [fill={rgb, 255:red, 17; green, 16; blue, 16 }  ,fill opacity=1 ] (403.34,295.5) .. controls (403.35,296.87) and (404.46,297.97) .. (405.83,297.96) .. controls (407.19,297.95) and (408.29,296.83) .. (408.28,295.46) .. controls (408.27,294.09) and (407.15,292.99) .. (405.79,293) .. controls (404.42,293.01) and (403.33,294.13) .. (403.34,295.5) -- cycle ;
\draw (132.25,222.53) node [anchor=north west][inner sep=0.75pt]    {$f$};
\draw (181.18,273.49) node [anchor=north west][inner sep=0.75pt]    {$e$};
\draw (296.64,273.99) node [anchor=north west][inner sep=0.75pt]    {$e$};
\draw (245.63,220.36) node [anchor=north west][inner sep=0.75pt]    {$f$};
\draw (363.45,227.08) node [anchor=north west][inner sep=0.75pt]    {$e$};
\draw (350.11,323.11) node [anchor=north west][inner sep=0.75pt]    {$f$};					
\end{tikzpicture}
\centerline{$\epsilon (e,f) = 1$ \quad\qquad $\epsilon(e,f) = -1$ \quad\qquad $\epsilon (e,f) = 0$} 
\caption{Cases when arrows $f$ and $e$ are linked positively, negatively, or are unlinked.}
\label{fig29}
\end{center}
\end{figure}
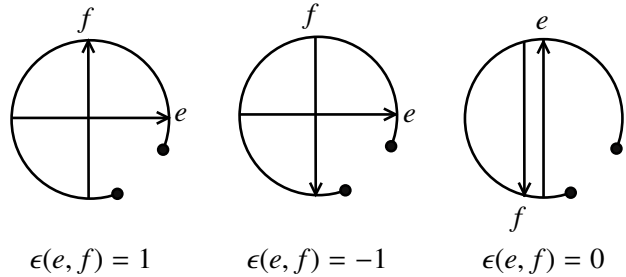

\begin{theorem} \cite[Th. 4.8]{Pet}\label{th5.2}
If two singular virtual open strings are homotopic, then their corresponding SBMs are homologous.
\end{theorem}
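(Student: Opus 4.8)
The plan is to follow the strategy of \cite[Th.~4.14]{Hen}: since homotopy of singular virtual open strings is generated by the moves (i), (ii), (iii) together with the singular move (s-ii), it suffices to verify that each single move transforms the associated SBM $(G,s,d,\mathbf{b})$ into a homologous one. I would fix a singular virtual open string $\alpha$, apply one move to obtain $\alpha'$, and in each case identify the induced change of $(G,s,d,\mathbf{b})$ with one of the operations $\widetilde{M}_1^{\pm 1}$, $\widetilde{M}_2^{\pm 1}$, $\widetilde{M}_3^{\pm 1}$, $N$, or with an isomorphism of SBMs; homologousness then follows from Definition~\ref{def:primitive}(iii).

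First I would treat move (i). Adding an arrow $e=(a,b)$ whose interior arc $ab$ is disjoint from $\operatorname{arr}(\alpha)$ introduces a single new element $g=e$ into $G$. By Rule~2 the interior $(ab)^{\circ}$ contains no arrow endpoints, so $ab\cdot cd=0$ for every other arrow $f$, and $e$ is unlinked with every arrow, so $\epsilon(e,f)=0$; by Rule~1 the $1$-smoothing at $e$ splits off a trivial circle, giving $i(e)=0$. Hence $\mathbf{b}(g,h)=0$ for all $h$, so $\alpha'$ arises from $\alpha$ by the elementary extension $\widetilde{M}_1$ (adding an annihilating element). For move (ii), the two new arrows $(a,b)$ and $(b',a')$ form a cancelling pair, and I would check directly from Rules~1 and~2 that their contributions to any $\mathbf{b}(\,\cdot\,,h)$ add up to $\mathbf{b}(s,h)$, i.e. the two new elements are complementary, so that $\alpha'$ arises from $\alpha$ by $\widetilde{M}_3$.

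Next, move (iii) permutes only which triples of endpoints carry arrows and changes neither $G$ nor $s$ nor $d$; I would verify that each entry $\mathbf{b}(e,f)=ab\cdot cd+\epsilon(e,f)$ and each $\mathbf{b}(e,s)=i(e)$ is preserved by the relabelling, so the resulting SBMs are isomorphic and hence homologous. Since move (iii) is permitted even when one arrow is the preferred one, I would additionally confirm in that case that the distinguished element $d$ is carried to $d$, so the isomorphism respects the base point. Finally, for the singular move (s-ii), switching the preferred arrow from $(a,b)$ to $(a',b')$ leaves $G$, $s$ and $\mathbf{b}$ unchanged but replaces $d$ by $g=(a',b')$; the disjointness hypotheses in (s-ii) are exactly what is needed to show $\mathbf{b}(g,h)+\mathbf{b}(d,h)=\mathbf{b}(s,h)$ for all $h$, i.e. $g$ and $d$ are complementary, so the change is precisely the singularity switch $N$.

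The main obstacle I anticipate is the bookkeeping forced by the open-string setting rather than the closed one. Because the core is an arc, the tail and head of $\alpha$ may lie inside the arcs $ab$ or $cd$ of Rule~2, so those interiors may be \emph{disconnected}, and the intersection index $i(e)$ of Rule~1 must be computed treating the two endpoints as in Fig.~\ref{fig127}. The delicate step will be checking, for instances of moves (i)--(iii) performed near the endpoints of $\alpha$, that the quantities $ab\cdot cd$, $\epsilon(e,f)$ and $i(e)$ still satisfy the annihilating and complementary identities needed to match $\widetilde{M}_1$, $\widetilde{M}_3$ and $N$ exactly. This is the point at which the argument genuinely departs from \cite[Th.~4.14]{Hen}, and where the disconnectedness of the arcs and the special role of the endpoints must be handled with care.
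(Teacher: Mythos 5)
Your overall route is the same as the paper's: verify that each homotopy move (i), (ii), (iii), (s-ii) changes the associated SBM by one of $\widetilde{M}_1^{\pm1}$, $\widetilde{M}_2^{\pm1}$, $\widetilde{M}_3^{\pm1}$, $N$, or an isomorphism, and your treatment of (ii), (iii) and (s-ii) agrees with the paper's. However, your treatment of move (i) contains a genuine gap: you claim the added arrow is always an annihilating element, so that move (i) is always realized by $\widetilde{M}_1$. That conclusion holds only for one of the two possible directions of the added kink arrow, namely when the empty arc runs from the tail $a$ to the head $b$. A flat $\Omega_1$ curl can equally produce an arrow $e=(a,b)$ whose \emph{empty} arc runs from the head $b$ to the tail $a$; this case must also be covered by move (i), or else moves (i)--(iii) would not generate flat $\Omega_1$ in both chiralities. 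In that case the arc $ab$ entering Rule~2 is the complementary arc, which contains all the other arrow endpoints, and $\mathbf{b}(e,\cdot)$ need not vanish. Concretely, take a core arc with arrow endpoints ordered $c,x_1,d,x_2,b,a$ from tail to head, with arrows $f=(c,d)$ and $g=(x_1,x_2)$, and adjoin the kink $e=(a,b)$ whose empty arc goes from $b$ to $a$. Then $(ab)^{\circ}$ contains $c,x_1,d,x_2$ while $(cd)^{\circ}$ contains only $x_1$, so Rule~2 gives $\mathbf{b}(e,f)=ab\cdot cd+\epsilon(e,f)=(0-1)+0=-1\neq 0$; hence $e$ is not annihilating. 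What is true (and what the paper asserts) is that in this direction $e$ is a \emph{core} element, $\mathbf{b}(e,h)=\mathbf{b}(s,h)$ for all $h$, so this instance of move (i) is realized by $\widetilde{M}_2$, not $\widetilde{M}_1$; this is exactly why the paper's proof says move (i) corresponds to ``$\widetilde{M}_1$ or $\widetilde{M}_2$, depending on the direction of the arrow that is added.''

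A structural check would have flagged the problem: if move (i) only ever produced annihilating elements, the extension $\widetilde{M}_2$ would be superfluous in the definition of homology of SBMs, whereas in Turaev's and Henrich's theory both $\widetilde{M}_1$ and $\widetilde{M}_2$ are indispensable precisely because the two chiralities of the flat kink realize the two kinds of distinguished elements. The repair is local: split move (i) into two cases according to which side of the new arrow the empty arc lies on, keep your computation for the annihilating case, and in the other case verify the core identity $\mathbf{b}(e,h)=\mathbf{b}(s,h)$ for all $h$ (using Rule~1 with the component ordering and sign convention of Fig.~\ref{10.28}), matching it with $\widetilde{M}_2$. With that correction, the rest of your argument, including the care you take with the knotoid endpoints and disconnected arcs, lines up with the paper's proof.
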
 

\subsection{Proof of Theorem~\ref{th4.2}} 

We split Theorem~\ref{th4.2} in Lemmas~\ref{lemma4.1} and~\ref{lemma4.2}. 

\begin{lemma} \label{lemma4.1}
If $K$ and $K'$ are two homotopic virtual knotoids such that $\mathcal{F}(K) \neq \mathcal{F}(K')$, then $\mathcal{G}(K) \neq \mathcal{G}(K')$.
\end{lemma}

\begin{proof}
The statement holds from the universality of $\mathcal{G}$.
\end{proof}
 
\begin{lemma} \label{lemma4.2}
There exist two homotopic virtual knotoids $K^1$ and $K^2$ such that $\mathcal{F}(K^1) = \mathcal{F}(K^2)$, 
but $\mathcal{G}(K^1) \neq \mathcal{G}(K^2)$. 
\end{lemma}
 
\begin{proof}
Let $D^{1}$ and $D^{2}$ be diagrams of homotopic virtual knotoids $K^1$ and $K^2$, presented in Fig.~\ref{fig30}. These diagrams are related by two crossing changes applied to crossings denoted by $5$ and $6$. Let us show that $\mathcal F (K^1) = \mathcal F (K^2)$, but $\mathcal G(K^1) \neq \mathcal G(K^2)$.
\begin{figure}[htbp]
\begin{center}
\tikzset{every picture/.style={line width=1.0pt}}  
\begin{tikzpicture}[x=0.75pt,y=0.75pt,yscale=-1,xscale=1]
\draw    (79.44,630.42) .. controls (47,573.89) and (301,573.89) .. (275,620.89) ;
\draw    (79.44,630.42) .. controls (87.1,652.42) and (98.74,634.34) .. (106.74,625.34) ;
\draw    (78,644.89) .. controls (75,621.89) and (93.33,615.56) .. (99.33,625.56) ;
\draw    (105,634.56) .. controls (116.67,648.23) and (129,627.39) .. (135,621.39) ;
\draw    (106.74,625.34) .. controls (117.74,613.34) and (125.6,627.91) .. (131.6,631.91) ;
\draw    (229.89,629.06) .. controls (224.46,624.49) and (211.89,602.77) .. (196.46,622.2) ; 
\draw    (255.67,619.23) .. controls (268.33,596.89) and (295.85,640.02) .. (272,642.89) ;
\draw    (131.6,631.91) .. controls (137.6,636.91) and (145.62,639.78) .. (152.67,631.56) ;
\draw    (275,620.89) .. controls (258.46,648.2) and (255.89,620.49) .. (243,614.89) ;
\draw    (151,567.89) .. controls (153,567.89) and (170.47,574.79) .. (171.47,580.29) ;
\draw    (174.64,593.41) .. controls (180.64,609.41) and (188,606.23) .. (193,613.23) ;
\draw    (243,614.89) .. controls (228.46,607.06) and (217.67,652.56) .. (202.33,625.89) ;
\draw    (159.67,621.56) .. controls (179.22,596.18) and (180.02,650.98) .. (196.46,622.2) ;
\draw    (135,621.39) .. controls (152,599.89) and (188,678.89) .. (202,673.89) ;
\draw    (184.33,652.89) .. controls (196.33,654.89) and (239.1,647.52) .. (272,642.89) ;
\draw    (78,644.89) .. controls (79,659.89) and (113.67,641.89) .. (170.67,651.89) ;
\draw   (201.92,590.28) -- (193.76,586.69) -- (202.02,583.36) ; 
\draw    (330,625.65) .. controls (303,568.29) and (553,571.65) .. (527,618.65) ;
\draw    (330,625.65) .. controls (339.79,642.56) and (350.9,631.02) .. (358.9,622.02) ;
\draw    (330,642.65) .. controls (327,619.65) and (343.12,616.23) .. (349.79,622.89) ;
\draw    (360.45,631.78) .. controls (373.19,643.87) and (381,625.15) .. (387,619.15) ;
\draw    (358.9,622.02) .. controls (369.9,610.02) and (382,626.65) .. (388,630.65) ;
\draw    (505,621.15) .. controls (492.15,648.27) and (467.12,594.73) .. (454.12,612.23) ;
\draw    (505,621.15) .. controls (522.15,592.02) and (548.15,630.52) .. (524,640.65) ;
\draw    (388,630.65) .. controls (397.65,635.78) and (397.79,633.89) .. (404.79,628.89) ;
\draw    (527,618.65) .. controls (521.65,630.18) and (515.25,633.78) .. (509.12,626.56) ;
\draw    (403,565.65) .. controls (405,565.65) and (421.35,571.99) .. (422.35,577.49) ;
\draw    (426.35,591.09) .. controls (432.35,607.09) and (443.55,604.05) .. (452.46,619.87) ;
\draw    (500.79,614.23) .. controls (485.25,588.98) and (473.46,650.37) .. (452.46,619.87) ;
\draw    (414.12,619.56) .. controls (427.65,599.78) and (431.25,650.58) .. (445.79,624.23) ; 
\draw    (387,619.15) .. controls (404,597.65) and (440,676.65) .. (454,671.65) ;
\draw    (436.75,647.49) .. controls (447.95,645.89) and (495.4,654.52) .. (524,640.65) ;
\draw    (330,642.65) .. controls (331,657.65) and (356.17,666.75) .. (419.15,649.09) ;
\draw   (450.52,586.83) -- (442.36,583.25) -- (450.62,579.91) ;
\draw    (249,628.89) .. controls (246.43,632.89) and (241.62,638.58) .. (229.89,629.06) ;
\draw  [fill={rgb, 255:red, 17; green, 16; blue, 16 }  ,fill opacity=1 ] (451.53,671.67) .. controls (451.54,673.04) and (452.66,674.14) .. (454.02,674.12) .. controls (455.38,674.11) and (456.48,673) .. (456.47,671.63) .. controls (456.46,670.26) and (455.34,669.16) .. (453.98,669.17) .. controls (452.62,669.18) and (451.52,670.3) .. (451.53,671.67) -- cycle ;
\draw  [fill={rgb, 255:red, 17; green, 16; blue, 16 }  ,fill opacity=1 ] (400.53,565.67) .. controls (400.54,567.04) and (401.66,568.14) .. (403.02,568.12) .. controls (404.38,568.11) and (405.48,567) .. (405.47,565.63) .. controls (405.46,564.26) and (404.34,563.16) .. (402.98,563.17) .. controls (401.62,563.18) and (400.52,564.3) .. (400.53,565.67) -- cycle ;
\draw  [fill={rgb, 255:red, 17; green, 16; blue, 16 }  ,fill opacity=1 ] (148.53,567.92) .. controls (148.54,569.28) and (149.66,570.38) .. (151.02,570.37) .. controls (152.38,570.36) and (153.48,569.24) .. (153.47,567.87) .. controls (153.46,566.51) and (152.34,565.41) .. (150.98,565.42) .. controls (149.62,565.43) and (148.52,566.55) .. (148.53,567.92) -- cycle ;
\draw  [fill={rgb, 255:red, 17; green, 16; blue, 16 }  ,fill opacity=1 ] (199.53,673.92) .. controls (199.54,675.28) and (200.66,676.38) .. (202.02,676.37) .. controls (203.38,676.36) and (204.48,675.24) .. (204.47,673.87) .. controls (204.46,672.51) and (203.34,671.41) .. (201.98,671.42) .. controls (200.62,671.43) and (199.52,672.55) .. (199.53,673.92) -- cycle ;
\draw   (74.02,630.42) .. controls (74.02,627.43) and (76.44,625) .. (79.44,625) .. controls (82.43,625) and (84.85,627.43) .. (84.85,630.42) .. controls (84.85,633.41) and (82.43,635.84) .. (79.44,635.84) .. controls (76.44,635.84) and (74.02,633.41) .. (74.02,630.42) -- cycle ;
\draw   (122.81,628.94) .. controls (122.81,625.95) and (125.24,623.52) .. (128.23,623.52) .. controls (131.22,623.52) and (133.65,625.95) .. (133.65,628.94) .. controls (133.65,631.93) and (131.22,634.36) .. (128.23,634.36) .. controls (125.24,634.36) and (122.81,631.93) .. (122.81,628.94) -- cycle ;
\draw   (220.24,623.8) .. controls (220.24,620.8) and (222.67,618.38) .. (225.66,618.38) .. controls (228.65,618.38) and (231.08,620.8) .. (231.08,623.8) .. controls (231.08,626.79) and (228.65,629.21) .. (225.66,629.21) .. controls (222.67,629.21) and (220.24,626.79) .. (220.24,623.8) -- cycle ;
\draw   (270.24,618.65) .. controls (270.24,615.66) and (272.67,613.23) .. (275.66,613.23) .. controls (278.65,613.23) and (281.08,615.66) .. (281.08,618.65) .. controls (281.08,621.64) and (278.65,624.07) .. (275.66,624.07) .. controls (272.67,624.07) and (270.24,621.64) .. (270.24,618.65) -- cycle ;
\draw   (326.24,626.65) .. controls (326.24,623.66) and (328.66,621.23) .. (331.65,621.23) .. controls (334.65,621.23) and (337.07,623.66) .. (337.07,626.65) .. controls (337.07,629.64) and (334.65,632.07) .. (331.65,632.07) .. controls (328.66,632.07) and (326.24,629.64) .. (326.24,626.65) -- cycle ;
\draw   (376.81,625.22) .. controls (376.81,622.23) and (379.23,619.81) .. (382.23,619.81) .. controls (385.22,619.81) and (387.64,622.23) .. (387.64,625.22) .. controls (387.64,628.22) and (385.22,630.64) .. (382.23,630.64) .. controls (379.23,630.64) and (376.81,628.22) .. (376.81,625.22) -- cycle ;
\draw   (472.75,618.88) .. controls (472.75,615.89) and (475.18,613.46) .. (478.17,613.46) .. controls (481.16,613.46) and (483.59,615.89) .. (483.59,618.88) .. controls (483.59,621.87) and (481.16,624.3) .. (478.17,624.3) .. controls (475.18,624.3) and (472.75,621.87) .. (472.75,618.88) -- cycle ;
\draw   (523.09,614.37) .. controls (523.09,611.37) and (525.52,608.95) .. (528.51,608.95) .. controls (531.5,608.95) and (533.93,611.37) .. (533.93,614.37) .. controls (533.93,617.36) and (531.5,619.78) .. (528.51,619.78) .. controls (525.52,619.78) and (523.09,617.36) .. (523.09,614.37) -- cycle ;
\draw (167,681.18) node [anchor=north west][inner sep=0.75pt]  [font=\large]  {$D^{1}$};
\draw (424,680.18) node [anchor=north west][inner sep=0.75pt]  [font=\large]  {$D^{2}$};
\draw (96.03,635.61) node [anchor=north west][inner sep=0.75pt]  [font=\normalsize] [align=left] {3};
\draw (153.03,632.94) node [anchor=north west][inner sep=0.75pt]  [font=\normalsize] [align=left] {4};
\draw (193.7,628.27) node [anchor=north west][inner sep=0.75pt]  [font=\normalsize] [align=left] {5};
\draw (247.36,629.61) node [anchor=north west][inner sep=0.75pt]  [font=\normalsize] [align=left] {6};
\draw (171.7,572.94) node [anchor=north west][inner sep=0.75pt]  [font=\normalsize] [align=left] {1};
\draw (171.03,655.94) node [anchor=north west][inner sep=0.75pt]  [font=\normalsize] [align=left] {2};
\draw (347.67,631.61) node [anchor=north west][inner sep=0.75pt]  [font=\normalsize] [align=left] {3};
\draw (403.61,629.61) node [anchor=north west][inner sep=0.75pt]  [font=\normalsize] [align=left] {4};
\draw (445.36,626.61) node [anchor=north west][inner sep=0.75pt]  [font=\normalsize] [align=left] {5};
\draw (499.36,628.27) node [anchor=north west][inner sep=0.75pt]  [font=\normalsize] [align=left] {6};
\draw (419.16,652.81) node [anchor=north west][inner sep=0.75pt]  [font=\normalsize] [align=left] {2};
\draw (425.56,568.6) node [anchor=north west][inner sep=0.75pt]  [font=\normalsize] [align=left] {1};	
\end{tikzpicture}
\caption{Example showing that $\mathcal{G}$ is strictly stronger than~$\mathcal{F}$.} \label{fig30}
\end{center}
\end{figure}
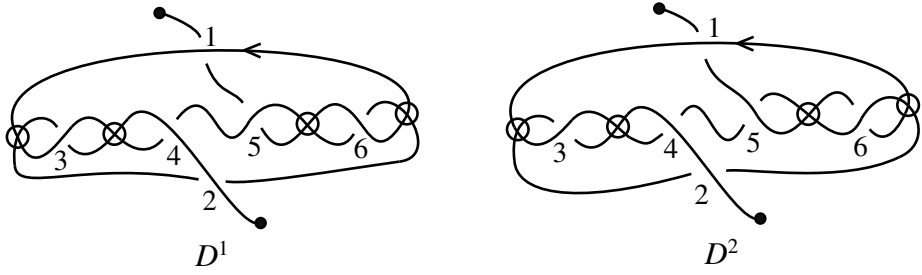

We denote by $D^i_k$, where $i=1,2$ and $k=1,\dots,6$, the virtual knotoid diagram obtained by applying $0$-smoothing at crossing $k$ of $D^{i}$ and by $\overline{D_k^i}$ the flattening of it. It is easy to check that $w(D^1)=0$ and $w(D^2)=0$. Moreover, for $i=1,2$ we get  
\begin{equation*} \label{eq1}
[\overline{D^i_1}] = [\overline{D^i_2}], \quad [\overline{D^i_3}] = [\overline{D^i_4}], \quad [\overline{D^i_5}]  = [\overline{D^i_6}]. 
\end{equation*}
Therefore, 
\begin{equation*} \label{eq3}
\mathcal{F}([D^1])   =   \sum_{k=1}^{6} w_{D^1}(k) \, [\overline{D^1_k}] - w(D^{1}) \, [\overline{D^1}]   
=  + [\overline{D^1_1}] - [\overline{D^1_2}] - [\overline{D^1_3}] + [\overline{D^1_4}] - [\overline{D^1_5}] + [\overline{D^1_6}] - 0 =  0 
\end{equation*}
and 
\begin{equation*} \label{eq4}
\mathcal{F}([D^2]) =  \sum_{k=1}^{6} w_{D^2}(k) \, [\overline{D^2_k}] - w(D^{2}) \, [\overline{D^2}] 
=   + [\overline{D^2_1}] - [\overline{D^2_2}] - [\overline{D^2_3}] + [\overline{D^2_4}] + [\overline{D^2_5}] - [\overline{D^2_6}] - 0 =  0, 
\end{equation*}
where $w_D(k)$ is the sign of crossing $k$ in diagram $D$. Thus, $\mathcal{F}(K^1) = \mathcal{F}(K^2)$.

Now let us show that $\mathcal G (K^1) \neq \mathcal G (K^2)$.
Consider $\mathcal{G}(K^1)$, where $K^1$ has diagram $D^1$. Recall that 
$$
\mathcal{G} (D^1) = \sum_{k=1}^{6} w_{D^1} (k) \, [\overline{(D^1)^k_{\operatorname{glue}}}] - w(D^1) \, [\overline{(D^1)^0_{\operatorname{sing}}}]. 
$$
The crossings $5$ and $6$, contribute the terms $(-1)[\overline{({D^{1})}^5_{glue}}]$ and $[\overline{({D^{1})}^6_{glue}}]$, respectively, where flat singular virtual knotoids associated to ``gluing" $D^1$ at crossings $5$ and $6$ look as the singular virtual open strings pictured in Fig.~\ref{fig31} and compute singular based matrices (SBMs) associated to both of them.
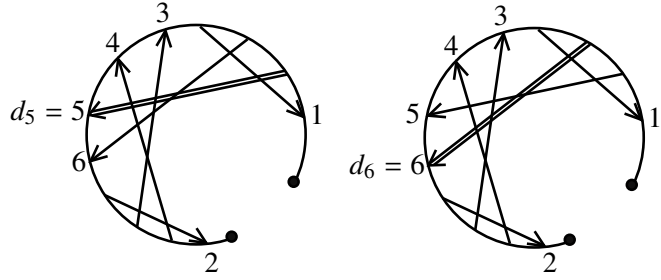
\begin{figure}[htbp]
\begin{center}
\tikzset{every picture/.style={line width=1.0pt}} 
\begin{tikzpicture}[x=0.75pt,y=0.75pt,yscale=-1,xscale=1]
\draw  [draw opacity=0] (224.1,340.9) .. controls (218.2,343.06) and (211.82,344.24) .. (205.16,344.24) .. controls (174.67,344.24) and (149.94,319.45) .. (149.94,288.87) .. controls (149.94,258.29) and (174.67,233.5) .. (205.16,233.5) .. controls (235.66,233.5) and (260.39,258.29) .. (260.39,288.87) .. controls (260.39,297.8) and (258.28,306.23) .. (254.54,313.69) -- (205.16,288.87) -- cycle ; \draw   (224.1,340.9) .. controls (218.2,343.06) and (211.82,344.24) .. (205.16,344.24) .. controls (174.67,344.24) and (149.94,319.45) .. (149.94,288.87) .. controls (149.94,258.29) and (174.67,233.5) .. (205.16,233.5) .. controls (235.66,233.5) and (260.39,258.29) .. (260.39,288.87) .. controls (260.39,297.8) and (258.28,306.23) .. (254.54,313.69) ;  
\draw  [fill={rgb, 255:red, 17; green, 16; blue, 16 }  ,fill opacity=1 ] (251.89,312.67) .. controls (251.9,314.04) and (253.01,315.14) .. (254.38,315.13) .. controls (255.74,315.12) and (256.84,314) .. (256.83,312.63) .. controls (256.82,311.26) and (255.7,310.16) .. (254.34,310.17) .. controls (252.98,310.18) and (251.88,311.3) .. (251.89,312.67) -- cycle ;
\draw  [fill={rgb, 255:red, 17; green, 16; blue, 16 }  ,fill opacity=1 ] (220.6,340.36) .. controls (220.61,341.72) and (221.72,342.83) .. (223.09,342.81) .. controls (224.45,342.8) and (225.55,341.68) .. (225.54,340.32) .. controls (225.53,338.95) and (224.41,337.85) .. (223.05,337.86) .. controls (221.68,337.87) and (220.59,338.99) .. (220.6,340.36) -- cycle ;
\draw    (207.18,234.36) -- (259.86,280.42) ;
\draw    (250.29,256.54) -- (152.29,277.21) ;
\draw    (231.14,240.7) -- (151.95,302.19) ;
\draw    (189.69,236.67) -- (175.42,335.41) ;
\draw    (166.22,251.43) -- (192.91,342.33) ;
\draw    (251.48,258.42) -- (152.51,279.21) ;
\draw    (158.86,319.26) -- (209.48,342.99) ;
\draw   (255.89,271.43) -- (259.04,279.73) -- (250.72,276.68) ;
\draw   (159.64,280.53) -- (151.13,278.87) -- (156.51,272.1) ;
\draw   (185.18,244.08) -- (189.71,236.44) -- (192.52,244.84) ;
\draw   (164.72,259.62) -- (165.88,250.81) -- (171.77,257.44) ;
\draw   (160.53,299.75) -- (151.99,302.17) -- (155.73,294.14) ;
\draw   (205.37,336.06) -- (210.26,343.48) -- (201.47,342.33) ;
\draw  [draw opacity=0] (394.62,342.45) .. controls (388.72,344.61) and (382.34,345.79) .. (375.69,345.79) .. controls (345.19,345.79) and (320.46,321) .. (320.46,290.42) .. controls (320.46,259.84) and (345.19,235.05) .. (375.69,235.05) .. controls (406.18,235.05) and (430.91,259.84) .. (430.91,290.42) .. controls (430.91,299.34) and (428.8,307.77) .. (425.06,315.24) -- (375.69,290.42) -- cycle ; \draw   (394.62,342.45) .. controls (388.72,344.61) and (382.34,345.79) .. (375.69,345.79) .. controls (345.19,345.79) and (320.46,321) .. (320.46,290.42) .. controls (320.46,259.84) and (345.19,235.05) .. (375.69,235.05) .. controls (406.18,235.05) and (430.91,259.84) .. (430.91,290.42) .. controls (430.91,299.34) and (428.8,307.77) .. (425.06,315.24) ; 
\draw  [fill={rgb, 255:red, 17; green, 16; blue, 16 }  ,fill opacity=1 ] (422.41,314.22) .. controls (422.42,315.59) and (423.54,316.69) .. (424.9,316.68) .. controls (426.27,316.66) and (427.36,315.55) .. (427.35,314.18) .. controls (427.34,312.81) and (426.23,311.71) .. (424.86,311.72) .. controls (423.5,311.73) and (422.4,312.85) .. (422.41,314.22) -- cycle ;
\draw  [fill={rgb, 255:red, 17; green, 16; blue, 16 }  ,fill opacity=1 ] (391.12,341.9) .. controls (391.13,343.27) and (392.24,344.37) .. (393.61,344.36) .. controls (394.97,344.35) and (396.07,343.23) .. (396.06,341.86) .. controls (396.05,340.49) and (394.93,339.39) .. (393.57,339.41) .. controls (392.2,339.42) and (391.11,340.54) .. (391.12,341.9) -- cycle ;
\draw    (377.7,235.91) -- (430.38,281.97) ;
\draw    (420.81,258.09) -- (322.81,278.76) ;
\draw    (402.04,242.17) -- (322.48,303.74) ;
\draw    (360.21,238.21) -- (345.95,336.96) ;
\draw    (336.74,252.98) -- (363.43,343.88) ;
\draw    (403.87,243.36) -- (325.07,304.16) ;
\draw    (329.38,320.81) -- (380,344.54) ;
\draw   (426.41,272.97) -- (429.56,281.28) -- (421.24,278.23) ;
\draw   (331.09,304.07) -- (322.43,304.36) -- (326.15,296.56) ;
\draw   (355.71,245.62) -- (360.23,237.98) -- (363.04,246.39) ;
\draw   (335.24,261.17) -- (336.4,252.36) -- (342.29,258.99) ;
\draw   (331.21,280.03) -- (322.36,279.26) -- (328.72,273.08) ;
\draw   (375.89,337.61) -- (380.78,345.02) -- (371.99,343.88) ;
\draw (260.78,272.24) node [anchor=north west][inner sep=0.75pt]  [font=\normalsize]  {$1$};
\draw (183.14,220.43) node [anchor=north west][inner sep=0.75pt]  [font=\normalsize]  {$3$};
\draw (158.03,235.91) node [anchor=north west][inner sep=0.75pt]  [font=\normalsize]  {$4$};
\draw (109.81,269.72) node [anchor=north west][inner sep=0.75pt]  [font=\normalsize]  {$d_5=5$};
\draw (140.81,295.79) node [anchor=north west][inner sep=0.75pt]  [font=\normalsize]  {$6$};
\draw (207.81,346.79) node [anchor=north west][inner sep=0.75pt]  [font=\normalsize]  {$2$};
\draw (431.31,273.79) node [anchor=north west][inner sep=0.75pt]  [font=\normalsize]  {$1$};
\draw (353.66,221.97) node [anchor=north west][inner sep=0.75pt]  [font=\normalsize]  {$3$};
\draw (328.55,237.46) node [anchor=north west][inner sep=0.75pt]  [font=\normalsize]  {$4$};
\draw (309.33,271.27) node [anchor=north west][inner sep=0.75pt]  [font=\normalsize]  {$5$};
\draw (280.93,296.93) node [anchor=north west][inner sep=0.75pt]  [font=\normalsize]  {$d_6=6$};
\draw (378.33,346.79) node [anchor=north west][inner sep=0.75pt]  [font=\normalsize]  {$2$};	
\end{tikzpicture}
\caption{Singular virtual open strings associated to gluing $D^1$ at crossings $5$ and $6$.} \label{fig31} 
\end{center}
\end{figure}

\underline{SBM ${\bf b}_5$.}
Let ${\bf b}_5$ be the skew-symmetric map in the SBM $(G_5, s_5, d_5, {\bf b}_5)$ of the singular virtual open string associated to crossing $5$, as pictured in Fig.~\ref{fig31}, left. (It may also be useful to think of this string as the flat singular virtual knotoid associated to gluing at crossing $5$ in Fig.~\ref{fig30}, left.) 

By Rule~1, ${\bf b}_5(1, s_5)$ is the intersection index of the flat multi-knotoid obtained by 1-smoothing at crossing $1$ where we order the components as followed: the component on the right is $\ell_1$ and the component on the left is $\ell_2$, see Fig.~\ref{2.10}, then ${\bf b}_5(1, s_5)=-2$. Similarly, we get 
$$
{\bf b}_5(2, s_{5})=2, \quad {\bf b}_5(3, s_5)=2, \quad {\bf b}_5(4, s_5)=0, \quad {\bf b}_5(6, s_5)=-2, \quad {\bf b}_5(d_5, s_5)=0.
$$ 
We use Rule~2 to compute ${\bf b}_5(1, 2)$. Denoting $1=(a_1,b_1)$ and $2=(a_2,b_2)$ we get ${\bf b}_5(1, 2) = a_1b_1 \cdot a_2 b_2 + \epsilon(1,2) =0-2+0=-2$. 
Analogously, 
$$
\begin{gathered} 
{\bf b}_5(1, 3)=-2, \quad {\bf b}_5(1, 4)=0, \quad {\bf b}_5(1, 6)=1, \quad {\bf b}_5(1, d_5)=0, \quad {\bf b}_5(2, 3)=1, \quad  {\bf b}_5(2, 4)=2, \quad {\bf b}_5(2, 6) = 2, \cr 
{\bf b}_5(2, d_5)=2, \quad {\bf b}_5(3, 4)=1, \quad {\bf b}_5(3, 6)=2, \quad  {\bf b}_5(3, d_5)=1, \quad {\bf b}_5(4, 6)=1,  \quad {\bf b}_5(4, d_5)=0, \quad {\bf b}_5(6, d_5)=-1,
\end{gathered}
$$
where $d_5=5$ in Fig.~\ref{fig31}, left. We organize the obtained values in a matrix as follows: 
$$
{\bf b}_5 = \qquad \bordermatrix{
	& s_5 & 1 &  2 & 3 & 4 & 6 & d_5  \cr
	s_5   & 0 &  2 &-2 &-2 & 0 & 2 & 0\cr
	1   &-2 &  0 &-2 &-2 & 0 & 1 & 0\cr
	2   & 2 &  2 & 0 & 1 & 2 & 2 & 2\cr
	3   & 2 &  2 &-1 & 0 & 1 & 2 & 1\cr
	4   & 0 &  0 &-2 &-1 & 0 & 1 & 0\cr
	6   &-2 & -1 &-2 &-2 &-1 & 0 &-1\cr 
	d_5   & 0 &  0 &-2 &-1 & 0 & 1 & 0\cr 
}
$$

We observe that matrix ${\bf b}_5$ is primitive. According to the Definition~\ref{def:primitive}, ${\bf b}_5$ is primitive if it cannot be obtained from another SBM by an elementary extension, even after applications of the singularity switch operation. Indeed, ${\bf b}_5$ cannot be  obtained by $\widetilde{M}_1$ since no row in the matrix is entirely consisting of zeros; it cannot be obtained by $\widetilde{M}_2$ since no row in the matrix is equal to $s_5 = [0, 2, -2, -2, 0, 2, 0]$; it cannot be obtained by $\widetilde{M}_3$ since calculations show that there are no two rows in the matrix whose sum is equal to $s_5$; it cannot be obtained by $N$ since calculations shows that no row $g$ satisfies $\text{row } g + \text{row } d_5 = \text{row } s_5$. Thus, ${\bf b}_5$ is primitive.

Furthermore, $d_5 \in {\bf b}_5$ is not annihilating-like, since ${\bf b}_5 (d_5, h) = 0$ not for all $h\in G$, or core-like, since ${\bf b}_5 (d_5, h) = {\bf b}_5 (s_5, h)$ not for all $h\in G$. 

\underline{SBM ${\bf b}_6$.}
Let ${\bf b}_6$ be the skew-symmetric map in the SBM $(G_6, s_6, d_6, {\bf b}_6)$ of the singular virtual open string associated to crossing $6$, as pictured in Fig.~\ref{fig31}, right. (It may also be useful to think of this string as the flat singular virtual knotoid associated to gluing at crossing $6$ in Fig.~\ref{fig30}, right.)

Analogously to ${\bf b}_5$, one can compute values of ${\bf b}_6$ in the SBM of the singular virtual open string associated to crossing $6$ by applying Rule~1 and Rule~2. We represent obtained values in matrix form as follows:
$$ 
{\bf b}_6 = \qquad \bordermatrix{
	& s_6 & 1 &  2 & 3 & 4 & 5 & d_6  \cr
	s_6   & 0 &  2 &-2 &-2 & 0 & 0 & 2\cr
	1   &-2 &  0 &-2 &-2 & 0 & 0 & 1\cr
	2   & 2 &  2 & 0 & 1 & 2 & 2 & 2\cr
	3   & 2 &  2 &-1 & 0 & 1 & 1 & 2\cr
	4   & 0 &  0 &-2 &-1 & 0 & 0 & 1\cr
	5   & 0 &  0 &-2 &-1 & 0 & 0 & 1\cr 
	d_6   &-2 & -1 &-2 &-2 &-1 &-1 & 0\cr 
}
$$ 
where $d_6=6$ in Fig.~\ref{fig31}, right.

By applying for ${\bf b}_6$ the same arguments as used for ${\bf b}_5$, we conclude that ${\bf b}_6$ is primitive, and $d_6 \in {\bf b}_6$ is not annihilating-like or core-like.

\underline{Comparing ${\bf b}_5$ and ${\bf b}_6$.}
Now we determine  whether ${\bf b}_5 $ and ${\bf b}_6 $ are related by an isomorphism or a composition of an isomorphism with a single move: $\widetilde{M_1}^{-1} \circ N \circ \widetilde{M_2}$, $\widetilde{M_2}^{-1} \circ N \circ \widetilde{M_1}$ or $N$.

Observe, that ${\bf b}_5 $ and ${\bf b}_6 $ are not related by an isomorphism. Indeed,  since 
${\bf b}_5 $ has elements $ \{s_5, 1, 2, 3, 4, 6, d_5\}$ and ${\bf b}_6$ has elements $ \{s_6, 1, 2, 3, 4, 5, d_6\} $, there is no bijection $G_5 \to G_6$ sending $s_5$ to $s_6$, $d_5$ to $d_6$ and transforming ${\bf b}_5$ into ${\bf b}_6$.

Moreover, $ \mathbf{b}_5 $ and $ \mathbf{b}_6 $ cannot be related by a composition of an isomorphism with a single move: $\widetilde{M_1}^{-1} \circ N \circ \widetilde{M_2}$, $\widetilde{M_2}^{-1} \circ N \circ \widetilde{M_1}$ or $N$. Indeed, since $\mathbf{b}_5$ and $\mathbf{b}_6$ are primitive,  $\widetilde{M}_1^{-1}$, $\widetilde{M}_2^{-1}$ and $N^{-1}$ cannot be applied to $\mathbf{b}_5$ and $\mathbf{b}_6$.

Thus, by Theorem~\ref{th5.1}, ${\bf b}_5$ and ${\bf b}_6$ are not homologous. It then follows from Theorem~\ref{th5.2} that the two singular virtual open strings in Fig.~\ref{fig31} are not homotopic. Thus, the terms in $\mathcal{G}(K^1)$ corresponding to crossings $5$ and $6$ do not cancel as they did in $\mathcal{F}(K^1)$.

We see that the terms in $\mathcal{G}(K^2)$ corresponding to crossings $5$ and $6$ in $K^2$ are the same as in $\mathcal{G}(K^1)$, except with opposite sign. Thus, $\mathcal{G}(K^1) - \mathcal{G}(K^2)$ is non-zero. Indeed this difference has two terms, one with coefficient $+2$ and one with coefficient $-2$. By the analysis above, these terms do not cancel and, hence, $\mathcal{G}(K^1) \neq \mathcal{G}(K^2)$.

The proof of Lemma is completed. 
\end{proof}

\begin{remark} {\rm 
Based on the results in~\cite{FLV}, let $K$ be a spherical virtual knotoid, there is a three-variable transcendental invariant $H_{K}(t,y,z)$, and $H_{K}(t,y,z)$ is a Vassiliev invariant of order one. On one hand, let $K'$ and $K''$ be two homotopic virtual knotoids, by the universality of $\mathcal{G}$, if $H_{K'}(t,y,z) \neq H_{K''}(t,y,z)$, then $\mathcal{G}(K')\neq \mathcal{G}(K'')$. On the other hand, we know that $K^1$ and $K^2$ are zero height in Fig.~\ref{fig30}, then $H_{K^1}(t,y,z) = H_{K^2}(t,y,z)$ since both vanish to zero, but $\mathcal{G}(K^1) \neq \mathcal{G}(K^2)$. 
}
\end{remark}


\begin{thebibliography}{100}

\bibitem[BN]{BN}
D.~Bar-Natan, \textit{On the Vassiliev knot invariants}. Topology, 1995, 34(2): 423--472.

\bibitem[BL]{BL}
J.\,S.~Birman, X.\,S.~Lin, \textit{Knot polynomials and Vassiliev's invariants}. Invent Math 1993, 111, 225--270.
	
\bibitem[Che]{Che}
Z.~Cheng, \textit{The chord index, its definitions, applications, and generalizations}. Canadian Journal of Mathematics, 2021, 73(3): 597--621.

\bibitem[CGX]{CGX}
Z.~Cheng, H.~Gao, M.~Xu, \textit{Some remarks on the chord index}. Journal of Knot Theory and its Ramifications, 2020, 29(10): 2042003.	

\bibitem[CDM]{CDM}
S.~Chmutov, S.~Duzhin, J.~Mostovoy, \textit{Introduction to Vassiliev knot invariants.} Cambridge University Press, 2012, 520 pages.

\bibitem[FLV]{FLV}
W.~Feng, F.~Li, A.~Vesnin, \textit{A three-variable transcendental invariant of planar knotoids via Gauss diagrams}. Mediterranean Journal of Mathematics, 2025, 22(4): 99.

\bibitem[FL]{FL}
Y.~Feng, F.~Li, \textit{The $F$-polynomial invariant for knotoids}. Journal of Knot Theory and its Ramifications, 2024, 33(11): 2450032.	

\bibitem[GIPV]{GIPV}
A.~Gill, M.~Ivanov, M.~Prabhakar, A.~Vesnin, \textit{Recurrent generalization of F-polynomials for virtual knots and links}. Symmetry, 2022, 14(1): 15.

\bibitem[GPV]{GPV} 
M.~Goussarov, M.~Polyak, O.~Viro, \textit{Finite type invariants of classical and virtual knots}. Topology, 2000, 39(5): 1045--1068.


\bibitem[GK]{GK}
N.~G\"{u}g\"{u}mc\"{u}, L.~Kauffman, \textit{New invariants of knotoids}. European Journal of Combinatorics, 2017, 65: 186--229.

\bibitem[Hen]{Hen}
A.~Henrich, \textit{A sequence of degree one Vassiliev invariants for virtual knots}. Journal of Knot Theory and its Ramifications, 2010, 19(04): 461--487.

\bibitem[IKL]{IKL}
H.~Im, S.~Kim, S.~Lee, \textit{The parity writhe polynomials for virtual knots and flat virtual knots}. Journal of Knot Theory and its Ramifications, 2013, 22(01): 1250133.

\bibitem[IV]{IV}
M.~Ivanov, A.~Vesnin, \textit{F-polynomials of tabulated virtual knots.} Journal of Knot Theory and its Ramifications,  2020, 29: 2050054.

\bibitem[Kau]{Kau}
L.~Kauffman, \textit{An affine index polynomial invariant of virtual knots}. Journal of Knot Theory and its Ramifications, 2013, 22(04): 1340007.

\bibitem[KPV]{KPV}
K.~Kaur, M.~Prabhakar, A.~Vesnin, \textit{Two-variable polynomial invariants of virtual knots arising from via virtual knot invariants}. Journal of Knot Theory and Its Ramifications, 2018, 27: 1842015.

\bibitem[Kim]{Kim}
J.~Kim, \textit{The affine index polynomial invariant of flat virtual knots}. Journal of Knot Theory and its Ramifications, 2014, 23(14): 1450073.

\bibitem[MLK]{MLK}
M.~Manousos, S.~Lambropoulou, L.~Kauffman, \textit{Finite type invariants for knotoids}. European Journal of Combinatorics, 2021, 98: 103402.

\bibitem[MK]{MK}
W.~Moltmaker, H.~Kauffman, \textit{On Vassiliev invariants of virtual knots}. Topology and its Applications, 2024, 351: 108935.


\bibitem[Pet]{Pet}
N.~Petit, \textit{Finite-type invariants of order one of long and framed virtual knots}. Journal of Knot Theory and its Ramifcations, 2019, 28(10): 1950064.

\bibitem[Saw]{Saw}
J.~Sawollek, \textit{An orientation-sensitive Vassiliev invariant for virtual knots}. Journal of Knot Theory and its Ramifications, 2003, 12(06): 767--779.

\bibitem[Tar]{Tar}
V.~Tarkaev, \textit{A homological casson type invariant of knotoids}. Results in Mathematics, 2021, 76(3):142.

\bibitem[Tch]{Tch}
V.~Tchernov, \textit{The most refined Vassiliev invariant of degree one of knots and links in $\mathbb{R}^1$-fibrations over a surface}. Journal of Knot Theory and its Ramifications, 1998, 7(2): 257--266.

\bibitem[Tur1]{Tur1}
V.~Turaev, \textit{Knotoids}. Osaka Journal of Mathematics, 2012, 49: 195--223.

\bibitem[Tur2]{Tur2}
V.~Turaev, \textit{Virtual strings}. Annales de l'institut Fourier, 2004, 54(7): 2455--2525.

\bibitem[Vas1]{Vas1}
V.~Vassiliev, \textit{Cohomology of knot spaces}. Theory of Singularities and its Applications, Advances in Soviet Mathematics, American Mathematical Society, Providence, RI, 1990, 1: 23--69.

\bibitem[Vas2]{Vas2}
V.~Vassiliev, \textit{Complements to discriminants of smooth maps: topology and applications}, American Mathematical Society, Providence, RI, 1992.

\end{thebibliography}
\end{document}